\newcommand{\scr}[1]{\mathscr{#1}}
\newcommand{\frk}[1]{\mathfrak{#1}}
\newcommand{\bb}[1]{\mathbb{#1}}
\newcommand{\N}{\mathbb{N}}	
\newcommand{\Z}{\mathbb{Z}}	
\newcommand{\R}{\mathbb{R}}	
\newcommand{\Id}{\mathrm{Id}}	
\newcommand{\Span}{\mathrm{span}}	
\newcommand{\Ker}{\mathrm{Ker}}	
\newcommand{\dd}{\,\mathrm{d}}	
\newcommand{\de}{\partial}		
\newcommand{\THEN}{\Rightarrow}	
\newcommand{\IFF}{\Leftrightarrow}	
\newcommand{\ci}{\framebox[\width]{$\subset$} }
\newcommand{\ic}{\framebox[\width]{$\supset$} }
\newcommand{\Lin}{\mathtt{Lin}}
\newcommand{\End}{\mathtt{End}}
\newcommand{\jet}{\mathfrak{j}}
\newcommand{\Jet}{\mathtt{J}}
\newcommand{\HorDer}{\mathtt{HD}}
\newcommand{\Der}{\mathtt{Der}}
\newcommand{\Aut}{\mathtt{Aut}}
\newcommand{\Poly}{\scr P}
\newcommand{\UniEnvAlg}{\scr U}
\newcommand{\Tensor}{\scr T}
\newcommand{\Ad}{\operatorname{Ad}}
\newcommand{\w}{\mathbf{w}}
\newcommand{\rcontr}{\raisebox{\depth}{\scalebox{1}[-1]{$\lrcorner$}}} 
\newcommand{\dual}[1]{#1^*} 
\newcommand{\ts}{\otimes}
\theoremstyle{plain}
\newtheorem{proposition}{Proposition}[section]
\newtheorem{theorem}[proposition]{Theorem}
\newtheorem{lemma}[proposition]{Lemma}
\newtheorem{corollary}[proposition]{Corollary}
\newtheorem{thm}{Theorem}[section]
\theoremstyle{definition}
\newtheorem{remark}[proposition]{Remark}
\theoremstyle{remark}
\title{Jet spaces on Carnot groups}
\author[Nicolussi~Golo]{Sebastiano Nicolussi Golo}
\address[Nicolussi~Golo]{Department of Mathematics and Statistics, 40014 University of Jyväskylä, Finland}	
\email{sebastiano.s.nicolussi-golo@jyu.fi}
\author[Warhurst]{Benjamin Warhurst}
\address[Warhurst]{Institute of Mathematics, University of Warsaw, ul. Banacha 2, 02-097 Warsaw, Poland}
\email{b.warhurst@mimuw.edu.pl}
\thanks{
S.~N.~G.~has been supported by the Academy of Finland (%
grant 328846, ``Singular integrals, harmonic functions, and boundary regularity in Heisenberg groups'',
grant 322898 ``Sub-Riemannian Geometry via  Metric-geometry and Lie-group Theory'',
grant 314172 ``Quantitative rectifiability in Euclidean and non-Euclidean spaces''),
and by the University of Padova STARS Project ``Sub-Riemannian Geometry and Geometric Measure Theory Issues: Old and New''.\\
B.~W.~was supported by the grant of the National Science Center, Poland (NCN), UMO-2017/25/B/ST1/01955.\\
S.~N.~G.~and B.~W.~are grateful for the support of this research provided by the grant of the National Science Center, Poland (NCN), UMO-2017/25/B/ST1/01955.\\
Part of this research was done while S.~N.~G.~was visiting B.~W.~at IMPAN, Warsaw.
The excellent work atmosphere is acknowledged.
}
\keywords{Jet spaces; stratified Lie groups; Carnot groups; embedding}
\subjclass[2010]{%
58A20; 
22E25; 
35R03; 
32C09. 
}
\date{\today} 
\begin{document}
\begin{abstract}
Jet spaces on $\R^n$ have been shown to have a canonical structure of stratified Lie groups (also known as Carnot groups).
We construct jet spaces over stratified Lie groups adapted to horizontal differentiation and show that these jet spaces are themselves stratified Lie groups. Furthermore, we show that these jet spaces support a prolongation theory for contact maps, and in particular, a B\"acklund type theorem holds. A byproduct of these results is an embedding theorem that shows that every stratified Lie group of step $s+1$ can be embedded in a jet space over a stratified Lie group of step $s$.
\end{abstract}
\maketitle

\setcounter{tocdepth}{1}
\phantomsection
\addcontentsline{toc}{section}{Contents}
\tableofcontents

\section{Introduction}
\subsection{Overview}

Classical jet spaces are vector bundles $\Jet^m(\R^n;\R^\ell) \to \R^n$ endowed with a horizontal bundle $\scr H^m\subset T\Jet^m(\R^n;\R^\ell)$,
such that jets of functions $\R^n\to\R^\ell$ are exactly those sections that are tangent to $\scr H^m$.
See \cite{MR989588} for further reference.

It is well known that these jet spaces $\Jet^m(\R^n;\R^\ell)$ carry a structure of stratified Lie group for which $\scr H^m$ is left-invariant,
see \cite{MR2115247}.
\emph{Stratified Lie groups} are also called \emph{Carnot groups} and we use the two terms as synonyms.

We observed that stratified Lie groups of step 2 can be embedded into a jet space $\Jet^1(\R^n;\R^\ell)$ for a suitable choice of $n$ and $\ell$.
By ``embedding'' we mean that there is a strata-preserving injective homomorphism of Lie groups.
At the same time, we realized that stratified Lie groups of step larger than 2 cannot be embedded in such jet spaces, see Remark~\ref{rem617bb0a5}.
A heuristic analysis led us to believe that we needed to substitute the $\R^n$ in $\Jet^m(\R^n;\R^\ell)$ with a stratified Lie group.

In this paper, we consider jets adapted to horizontal differentiation of smooth functions $f$ from a stratified group $G$ into a vector space $W$.
The stratification of $G$ induces an ``intrinsic'' notion of degree both for polynomials and for left-invariant differential operators.
Following~\cite{MR657581}, 
it is natural to reorganize the derivatives of $f$ using this intrinsic degree.
Equivalently, one can reorganize the homogeneous polynomials of the Taylor expansion of $f$.
See also~\cite{Neusser2010Weighted-jet-bu}.

Such a reorganization allows us to construct vector bundles $\Jet^m(G;W)\to G$
endowed with a horizontal bundle $\scr H^m\subset T\Jet^m(G;W)$ that characterizes sections defined as ``horizontal'' jets of functions $G\to W$.
We call such spaces \emph{jet spaces over $G$} and we prove two fundamental facts of jet spaces, that is, a prolongation and a de-prolongation theorem.

Finally, we show that every stratified Lie group of step $s+1$ can be embedded into some $\Jet^s(G';W)$ for a stratified Lie group $G'$ of step $s$.

\subsection{Detailed description of results}
For a vector space $W$ and a simply connected Lie group $G$ with stratified Lie algebra $\frk g = \bigoplus_{i=1}^sV_i$, bracket generated by $V_1$, our starting point is to consider the space $\HorDer^k(\frk g;W)$ of $k$-multi-linear maps $A$ on $V_1$ given by $k$-th order horizontal differentiation of smooth functions $f:G\to W$ at the neutral element $e\in G$.
More precisely, 
 if $v_1, \dots ,v_k \in V_1$ and $\tilde v_1, \dots, \tilde v_k$ denote the corresponding left invariant vector fields on $G$, then 
\[
A (v_1, \dots ,v_k) = \tilde v_k \dots  \tilde v_1 f(e) .
\]
We then define $\HorDer^{\leq m}(\frk g;W)=\bigoplus_{k=0}^m \HorDer^k(\frk g;W)$,
where we set $\HorDer^0(\frk g;W)=W$.

If $G$ is abelian, i.e., $G=\R^n$ for some $n$, then $\HorDer^k(\R^n;W)$ is the space of symmetric $k$-multilinear maps from $\R^n$ to $W$.
If $G$ is not abelian,  we don't have such a clear characterization of $\HorDer^k(\frk g;W)$, but we do have an algorithm to construct a basis of it, see~Remark~\ref{rem6093eb23}.

The jet space $\Jet^m(G;W)$ will be the simply connected Lie group with Lie algebra 
\[
\jet^m(\frk g;W) = \frk g\ltimes \HorDer^{\leq m}(\frk g;W)
\]
where the semi-direct product is given by the representation of $\frk g$ over $\HorDer^{\leq m}(\frk g;W)$ that is induced by \emph{right-contractions}.
If $v\in V_1$ and $A\in\HorDer^k(\frk g;W)$, we define the right contraction of $A$ by $v$ as the $(k-1)$-multi-linear map $v\rcontr A\in \HorDer^{k-1}(\frk g;W)$,
\[
v\rcontr A(v_1,\dots,v_{k-1}) = A(v_1,\dots,v_{k-1},v) .
\]
In Proposition~\ref{prop05301400}, we show that this operation $v\mapsto v\rcontr$ extends to a Lie algebra anti-morphism $\frk g\to\End(\HorDer^{\le m}(\frk g;W))$.

The Lie algebra $\jet^m(\frk g;W)$ turns out to be again stratified of step $\max\{s,m+1\}$, with layers of the form
\[
\begin{array}{rcl}
\jet^m(\frk g;W)_1 & = &V_1 \oplus \HorDer^m(\frk g;W) \\
\jet^m(\frk g;W)_2 & = &V_2 \oplus \HorDer^{m-1}(\frk g;W) \\
\jet^m(\frk g;W)_3 & = &V_3 \oplus \HorDer^{m-3}(\frk g;W) \\
\vdots & & \vdots
\end{array}
\]
The contact structure $\scr H^m$ of the jet space $\Jet^m(G;W)$ is the left invariant distribution defined by the first layer $\jet^m(\frk g;W)_1$.
A smooth function $f:G\to W$ defines a section $\Jet^mf:G\to \Jet^m(G;W)$ in a natural way, and we call $\Jet^mf$ the \emph{(horizontal) jet of $f$}.
We show in Proposition~\ref{prop5eafeb43} that a section $\gamma:G\to \Jet^m(G;W)$ is the jet of a function if and only if $d\gamma$ maps $\scr H_G$ to $\scr H^m$, where $\scr H_G$ is the left-invariant distribution on $G$ defined by $V_1$. 

Once the definition and the structure of $\Jet^m(G;W)$ are set, we prove three main results.
First of all, we demonstrate that contact maps $\Jet^m(G;W)\to \Jet^m(G;W)$ are subject to a prolongation theory similar to the classical case and that a B\"acklund type theorem holds.

\begin{thm}[Prolongation Theorem]\label{thm617406c6}
	Suppose $m\ge0$, $\Omega\subset\Jet^m(G;W)$ is open, and that $F:\Omega\to\Jet^m(G;W)$ is a contact map. If $\pi_m:\Jet^{m+1}(G;W)\to \Jet^{m}(G;W)$ denotes the projection along $\HorDer^{m+1}(\frk g;W)$, then 
	there is an open set $\hat \Omega\subset\Jet^{m+1}(G;W)$
	and a unique contact map $\hat F:\hat \Omega\to\Jet^{m+1}(G;W)$ such that 
	\[
	\pi_m\circ \hat F = F\circ\pi_m. 
	\]
\end{thm}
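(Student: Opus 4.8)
The plan is to build $\hat F$ explicitly by deciding how it must act, layer by layer, on the extra stratum $\HorDer^{m+1}(\frk g;W)$, forced by the contact condition, and then verify that the resulting map is indeed a contact map. First I would reduce to a local picture: a contact map $F:\Omega\to\Jet^m(G;W)$ takes jets of functions to jets of functions, so near a point where $F$ is a local diffeomorphism onto its image, one may think of $F$ as a transformation of sections $\Jet^m u \mapsto \Jet^m \tilde u$, where $\tilde u$ depends on $u$ and its horizontal derivatives up to order $m$. (The characterization of jets via $\scr H^m$ from Proposition~\ref{prop5eafeb43} is exactly what lets us pass between ``contact map'' and ``transformation of jets''.) The candidate $\hat F$ is then obtained by prolonging this transformation one order higher: on the base point and on the first $m$ layers $\hat F$ is determined by $F\circ\pi_m$, and on $\HorDer^{m+1}(\frk g;W)$ it is determined by requiring that $\hat F$ send $\Jet^{m+1}u$ to $\Jet^{m+1}\tilde u$.

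The key steps, in order, are the following. (1) Fix coordinates: use the stratified structure of $\jet^{m+1}(\frk g;W)=\frk g\ltimes\HorDer^{\le m+1}(\frk g;W)$ and the fact that $\pi_m$ is the projection killing $\HorDer^{m+1}(\frk g;W)$, so that a point of $\Jet^{m+1}(G;W)$ is a pair $(p,A)$ with $p=\pi_m$ of it in $\Jet^m(G;W)$ and $A\in\HorDer^{m+1}(\frk g;W)$. (2) Write the contact condition for a prospective $\hat F$: differentiating $\pi_m\circ\hat F=F\circ\pi_m$ and using that $\hat F$ must carry $\scr H^{m+1}$ into $\scr H^{m+1}$ and project to $F$, one gets a system of equations that the $\HorDer^{m+1}$-component of $\hat F$ must satisfy; the crucial point is that these equations involve the unknown component only through a \emph{right-contraction}, i.e.\ through the map $v\rcontr(-)$, which by Proposition~\ref{prop05301400} and the description of the layers $\jet^{m+1}(\frk g;W)_j$ is surjective from $\HorDer^{m+1}$ onto $\HorDer^{m}$ in the appropriate sense. (3) Solve the system: because the top-degree data of a contact map is exactly one horizontal differentiation below what we are solving for, the contact condition at level $m+1$ determines the $\HorDer^{m+1}$-component of $\hat F$ uniquely as a function of $(p,A)$ — this is the ``prolongation formula''. (4) Define $\hat\Omega$ as the (open) set where this formula is valid and $\hat F$ is well defined, and set $\hat F(p,A)=(F(p),\ \text{prolongation formula})$. (5) Verify that $\hat F$ so defined actually is a contact map, i.e.\ $d\hat F(\scr H^{m+1})\subset\scr H^{m+1}$; this follows because $\hat F$ takes $\Jet^{m+1}u$ to $\Jet^{m+1}\tilde u$ for every $u$ and, by Proposition~\ref{prop5eafeb43}, sections of that form are precisely the integral sections of $\scr H^{m+1}$, which fills out $\scr H^{m+1}$ pointwise. (6) Uniqueness: any two contact lifts of $F$ agree after $\pi_m$ and agree on the $\scr H^{m+1}$-directions by the forced prolongation formula, hence agree.

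The main obstacle I expect is Step (2)–(3): showing that the contact condition really does pin down the $\HorDer^{m+1}$-component \emph{uniquely} and \emph{consistently}. Concretely one must check that the right-contraction operators $v\rcontr(-):\HorDer^{m+1}(\frk g;W)\to\HorDer^{m}(\frk g;W)$, as $v$ ranges over $V_1$, assemble into an injective (in fact bijective onto the relevant subspace) linear map — this is what guarantees that the system for the unknown component is neither over- nor under-determined. In the classical abelian case $\HorDer^{m+1}(\R^n;W)=\Sym^{m+1}(\R^n;W)$ and this is the elementary statement that a symmetric $(m{+}1)$-tensor is recovered from all its contractions; in the non-abelian case it should follow from the layer decomposition of $\jet^{m+1}(\frk g;W)$ quoted in the introduction together with Proposition~\ref{prop05301400}, but making the bookkeeping precise — keeping track of which horizontal derivative lands in which stratum, across steps $>2$ where $\HorDer^{m-3}$ rather than $\HorDer^{m-1}$ appears — is where the real work lies. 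A secondary, more routine obstacle is confirming openness of $\hat\Omega$ and smoothness of the prolongation formula, which amounts to checking that the map defining it has surjective differential wherever $F$ does, an implicit-function-theorem argument.
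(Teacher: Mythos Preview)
Your overall strategy matches the paper's: write the contact conditions for $\hat F$, observe they force the $\HorDer^{m+1}$-component via right-contractions, and use the jets-to-jets interpretation (the paper's Lemma~\ref{lem5eafdaf1}) to check consistency. However, you misidentify both the definition of $\hat\Omega$ and the main obstacle.

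First, $\hat\Omega$ is not ``where $F$ has surjective differential''. The contact equations take the form $N_j(\hat p)\rcontr \hat F^{m+1}(\hat p) = d\hat F^m(\hat p)\,\hat{\bb X}_j(\hat p)$, where $N_j(\hat p) = dL_{F_G(p)}^{-1}\, dF_G(p)\, d\pi_m(\hat p)\,\hat{\bb X}_j(\hat p) \in V_1$ and $p=\pi_m(\hat p)$. These determine $\hat F^{m+1}(\hat p)$ as an element of $\Lin^{m+1}(V_1;W)$ precisely when $N_1(\hat p),\dots,N_r(\hat p)$ form a basis of $V_1$; that is the correct open condition defining $\hat\Omega$. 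Note that the $N_j$ depend on $\hat p$, not just on $p$, and that $F$ is not assumed to be a local diffeomorphism.

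Second, the obstacle is not injectivity of $A\mapsto (v_1\rcontr A,\dots,v_r\rcontr A)$ on $\HorDer^{m+1}(\frk g;W)$ --- that is trivial, since any $A\in\Lin^{m+1}(V_1;W)$ is recovered from its contractions against a basis. The genuine issue is the reverse: the system determines $\hat F^{m+1}(\hat p)$ only in $\Lin^{m+1}(V_1;W)$, and one must show it lands in the proper subspace $\HorDer^{m+1}(\frk g;W)$. No abstract property of $\rcontr$ can give this; it depends on the specific right-hand sides produced by $F$. The paper resolves it exactly through your jets-to-jets ingredient: for $\hat p = \Jet^{m+1}f(a)\in\hat\Omega$ one constructs $h$ with $F\circ\Jet^m f = \Jet^m h\circ F_G\circ\Jet^m f$ locally (Lemma~\ref{lem5eafdaf1}), and then $\hat F^{m+1}(\hat p) = A^{m+1}_{h,F_G(p)}\in\HorDer^{m+1}(\frk g;W)$ by construction. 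So your Step~(5) idea is precisely what makes Step~(3) work, not a separate verification after the fact; the logical order in your outline should be reversed.
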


A more precise statement is Theorem~\ref{thm5eafe9c5}, where we give a precise definition of the open set $\hat\Omega$.
The size of the set $\hat\Omega$ remains unclear to us in general.
For sure, if $F$ is close enough to the identity map, then $\hat\Omega$ is not empty.
Moreover, if $F$ is itself the prolongation of a contact map, then $\hat\Omega = \pi_m^{-1}(\Omega)$, see Remark~\ref{rem61765cb1}.
As a consequence of the following Theorem~\ref{thm5ebc31f5}, this is always the case when $m\ge2$.
We don't know if it can be proven that $\pi_m(\hat\Omega) = \Omega$ also when $m=1$.

\begin{thm}[de-Prolongation Theorem]\label{thm5ebc31f5}
	If $m\ge2$, then every contact diffeomorphism $\Jet^m(G;W)\to\Jet^m(G;W)$ is the prolongation of a contact diffeomorphism $\Jet^1(G;W)\to\Jet^1(G;W)$.
		
	Moreover, suppose that one of the following conditions is satisfied:
	\begin{enumerate}[label=(\Alph*)]
	\item
	$\dim(W)>1$, or
	\item
	for every $v\in V_1\setminus\{0\}$ there is $v'\in V_1$ with $[v,v']\neq0$.
	\end{enumerate}
	Then every contact diffeomorphism $\Jet^1(G;W)\to\Jet^1(G;W)$ is the prolongation of a contact diffeomorphism $\Jet^0(G;W)\to\Jet^0(G;W)$.
\end{thm}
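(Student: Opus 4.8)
The plan is to analyze how a contact diffeomorphism interacts with the distinguished subspaces of the Lie algebra $\jet^m(\frk g;W)$ that are intrinsically defined by the stratification and the contact distribution $\scr H^m$. The key observation is that a contact diffeomorphism $F$ must preserve $\scr H^m$ and all its iterated brackets, hence it preserves the layers of the stratification; in particular, its differential at any point is a graded automorphism-like map on the associated graded. I would first identify, for $m\ge 2$, the "flag of contact distributions" inside $T\Jet^m(G;W)$: not only $\scr H^m$ and its brackets, but also certain subdistributions obtained from the $\HorDer^k(\frk g;W)$ summands. The crucial structural fact to isolate is that the subbundle corresponding to $\bigoplus_{k\le m-2}\HorDer^k(\frk g;W)\oplus\frk g$ (i.e.\ the kernel of $d\pi_1$ composed with the higher-order part) is intrinsically characterized by the contact structure when $m\ge 2$, because in that range $\HorDer^{m+1}$, $\HorDer^m$, and $\HorDer^{m-1}$ occupy three distinct layers and are linked by right-contraction in a way that pins them down. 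Once $F$ is shown to descend through $\pi_1\circ\cdots\circ\pi_{m-1}$ to a well-defined diffeomorphism $F_1$ of $\Jet^1(G;W)$, one checks $F_1$ is contact (automatic from naturality of the projections and $d\pi$ intertwining the horizontal bundles) and that prolonging $F_1$ recovers $F$ by uniqueness in the Prolongation Theorem~\ref{thm617406c6}.

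For the descent itself I would argue inductively on $m$, reducing to the single step $\Jet^m\to\Jet^{m-1}$ for $m\ge 2$. The point is that $\ker d\pi_{m-1}$ — the vertical direction $\HorDer^m(\frk g;W)$ — must be sent by $dF$ into a subspace that is again intrinsically defined. Here the layer structure is decisive: $\HorDer^m(\frk g;W)$ sits in layer~$1$ together with $V_1$, but it is distinguished from $V_1$ by the property that its right-contraction pairing with $V_1$ lands in $\HorDer^{m-1}(\frk g;W)$, which for $m\ge 2$ lives in layer~$2$ alongside $V_2$; the Jacobi/derivation identities from Proposition~\ref{prop05301400} force $F$ to respect the splitting $\jet^m(\frk g;W)_1 = V_1\oplus\HorDer^m$ up to the ambiguity that is exactly the freedom of a prolongation. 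Translating this infinitesimal statement into a statement about the diffeomorphism $F$ (not just $dF$ at one point) is done by a standard connectivity/integrability argument: the distribution $\ker d\pi_{m-1}$ is integrable with leaves the fibers of $\pi_{m-1}$, $F$ preserves it, hence $F$ permutes the fibers and descends.

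The main obstacle I expect is the borderline case $m=2$, where $\HorDer^{m-1}=\HorDer^1$ sits in layer~$2$ but must be disentangled from $V_2$, and the case in the second part of the theorem, $m=1$, where the extra hypotheses (A) or (B) are needed. For $m=1$ the vertical bundle is $\HorDer^1(\frk g;W)$, sitting in layer~$1$ together with $V_1$, and the only intrinsic invariant distinguishing them is the bracket behavior: a contact diffeomorphism could in principle mix $V_1$ with $\HorDer^1$. Hypothesis (A), $\dim W>1$, gives enough "room" in $\HorDer^1(\frk g;W)$ that the image of $V_1$ cannot spill into it without destroying the rank of some intrinsic bracket map; hypothesis (B), that $V_1$ is "bracket-nondegenerate" in $\frk g$, ensures $V_1$ itself is recognizable because every nonzero horizontal vector from the $\frk g$-part generates a nontrivial bracket into $V_2$, whereas vectors from $\HorDer^1$ bracket only into $\HorDer^0=W$, which lies in a different layer. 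So the argument splits into these two sub-cases, in each of which one shows the image of the vertical bundle under $dF$ is forced to be the vertical bundle, and then concludes as before by descending $F$ through $\pi_0$ and invoking uniqueness of prolongation. I would expect the bookkeeping of which $\HorDer^k$ lands in which layer — and the degenerate overlaps when $s$ (the step of $G$) is small — to be the fiddliest part of writing this cleanly.
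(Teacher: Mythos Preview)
Your overall architecture matches the paper's: show that $dF$ preserves the vertical subbundle $\scr V$ spanned by the top $\HorDer$-layer, deduce that $F$ permutes the fibers of the projection and hence descends, check the descended map is contact using Lemma~\ref{lem5ebc4d52}, and invoke uniqueness of prolongation. What is missing in your sketch is the concrete invariant that singles out $\scr V$ inside $\scr H^m$, and in one place your reasoning contains an actual error.

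For $m\ge 2$ you suggest that $\HorDer^m$ is distinguished from $V_1$ inside layer~$1$ because ``its right-contraction pairing with $V_1$ lands in $\HorDer^{m-1}$, which for $m\ge 2$ lives in layer~$2$''. But $[V_1,V_1]=V_2$ also lives in layer~$2$, so single brackets do not separate the two summands; a contact diffeomorphism could a priori rotate $V_1$ into $\HorDer^m$. The paper's device is a \emph{triple}-bracket criterion (Lemma~\ref{lem5ebc2bc6}): a horizontal vector field $X$ on $\Jet^m$ (with $m\ge 2$) has vanishing $V_1$-component if and only if $\Pi_3([X,[X,Y]])=0$ for every horizontal $Y$. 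The forward direction is immediate; for the converse one takes $Y=B\in\HorDer^m$ and observes $\Pi_3([X,[X,B]])=v^X\rcontr(v^X\rcontr B)$, which is nonzero for suitable $B$ precisely because $m\ge 2$. This ``characteristic vector field'' property is manifestly preserved by contact diffeomorphisms, giving $dF(\scr V)=\scr V$.

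For $m=1$ under condition~(B) your argument is incorrect: you write that vectors from $\HorDer^1$ bracket into $\HorDer^0=W$, ``which lies in a different layer'' from $V_2$. It does not; both $W$ and $V_2$ sit in layer~$2$ of $\jet^1(\frk g;W)$, so layer bookkeeping alone cannot separate them. The paper instead uses the pointwise centralizer dimension $\delta(X)=\dim\{Y\in\jet^1(\frk g;W)_1:[X,Y]=0\}$, which a contact diffeomorphism preserves (via Lemma~\ref{lem5ebc8077}). One computes $\delta(0,\alpha)=2\dim V_1-1$ for $\alpha\neq 0$, while condition~(B) forces $\delta(v,\alpha)\le 2\dim V_1-2$ whenever $v\neq 0$; this numerical gap pins down $\HorDer^1$. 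Under condition~(A) the paper's mechanism is that $\HorDer^1(\frk g;W)$ is the \emph{unique} abelian subalgebra of $\jet^1(\frk g;W)_1$ of dimension $\dim V_1\cdot\dim W$ when $\dim W>1$ (Proposition~\ref{prop5ebc819c}); since $dF$ carries $\scr V$ to an involutive horizontal subbundle of the same rank, Corollary~\ref{cor5ebc8112} and this uniqueness finish the argument. Your ``enough room / rank of some bracket map'' remark points in this direction but does not yet isolate the statement that actually does the work.
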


We can also state Theorem~\ref{thm5ebc31f5} for contact diffeomorphisms that are defined on domains in $\Jet^m(G;W)$, in which case the de-prolongation will only be local a priori.
A precise result is stated in Theorems~\ref{thm5ebc3248} and~\ref{thm606f2a10}.
We show in Remark~\ref{rem616fabbb} that the second part of Theorem~\ref{thm5ebc31f5} is sharp.

Finally, we prove that stratified groups embed into jet spaces.
It has already been proved by Montgomery in~\cite[\S6.5.1]{MR1867362} that every manifold of dimension $n$ endowed with a distribution of rank $k$ is \emph{locally smoothly embeddable} into $\Jet^1(\R^k;\R^{n-k})$.
Our result differ from Montgomery's as we seek a \emph{global embedding as stratified Lie groups}, that is, we want to reconstruct a stratified Lie group $G$ as a stratified subgroup of some jet space.
This is simply not possible with standard jet spaces, as we explain in Remark~\ref{rem617bb0a5}.

Notice that, of course, $G$ embeds into $\Jet^m(G;W)$, by construction.
However, we prove that we can reconstruct $G$ inside the jet space of a stratified group with lower step.

\begin{thm}[Embedding into jet spaces]\label{thm6171c00e}
	Let $G$ be a stratified group of step $s+1$.
	Then there exists a stratified group $G'$ of step $s$ and a vector space $W$ such that $G$ embeds as a stratified group in $\Jet^s(G';W)$.
\end{thm}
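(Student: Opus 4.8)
The plan is to realize $G$ as a stratified subgroup of a jet space by a careful choice of $G'$, $W$, and an embedding built from the stratification data of $G$. Write $\frk g = \bigoplus_{i=1}^{s+1} V_i$ for the stratified Lie algebra of $G$. The natural candidate for $G'$ is the group whose Lie algebra is the \emph{truncation} $\frk g' = \bigoplus_{i=1}^{s} \frk g_i'$ obtained by replacing the top layer $V_{s+1}$ with the image of $V_{s+1}$ under... more precisely, one should form the quotient $\frk g / V_{s+1}$ if $V_{s+1}$ were an ideal, but in general $V_{s+1}$ is not the whole obstruction. Instead I would take $\frk g'$ to be a \emph{free-like} stratified Lie algebra of step $s$ that surjects onto the step-$s$ truncation of $\frk g$, or, more economically, work directly with $G' = G/\exp(V_{s+1})$ when $V_{s+1}$ is central (which it is, being the top layer: $[\frk g, V_{s+1}] = 0$). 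So set $G' := G/Z$ where $Z = \exp(V_{s+1})$ is a central subgroup; then $G'$ is a stratified group of step $s$ with Lie algebra $\frk g/V_{s+1} = \bigoplus_{i=1}^s V_i$ (the layers are unchanged in degrees $1,\dots,s$). Let $\pi: G \to G'$ be the quotient projection.

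Next I would choose $W$ and construct the embedding. The idea is that $G$ is a central extension of $G'$ by $V_{s+1}$, classified by a Lie-algebra $2$-cocycle $\omega: \frk g' \wedge \frk g' \to V_{s+1}$ which is homogeneous of degree $s+1$. Since $\Jet^s(G';W)$ contains, in its top layers, spaces of high-degree horizontal derivatives, the plan is to encode $\omega$ (equivalently, a $W$-valued primitive of $\omega$ along horizontal curves) inside $\HorDer^{\le s}(\frk g'; W)$ for a suitably large $W$. Concretely: realize the extension cocycle as a horizontal differential form on $G'$ and find a function (or finite tuple of functions) $f: G' \to W$ whose $s$-th horizontal jet transforms under the $\frk g'$-action exactly the way the $V_{s+1}$-coordinate of $G$ transforms. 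Then define $\Phi: \frk g \to \jet^s(\frk g'; W) = \frk g' \ltimes \HorDer^{\le s}(\frk g'; W)$ on layers: send $V_i \hookrightarrow \frk g' = $ (layer-$i$ part) for $i \le s$ via the natural identification, and send $V_{s+1}$ injectively into $\HorDer^s(\frk g'; W) \subset \jet^s(\frk g';W)_1$ — wait, degrees must match, so $V_{s+1}$ (degree $s+1$) must map into $\jet^s(\frk g';W)_{s+1} = \HorDer^0(\frk g';W) = W$ if $s \ge 2$, i.e. into the fiber $W$ sitting in the deepest layer. This forces $W \supseteq V_{s+1}$, so take $W = V_{s+1}$ (or a space containing it). The embedding then sends $V_{s+1}$ identically into the bottom $W$-layer and $\bigoplus_{i\le s}V_i$ into $\frk g'$, and one checks it is a Lie algebra homomorphism: the only nontrivial brackets to verify are $[V_i, V_j] \subset V_{s+1}$ with $i+j = s+1$, which on the jet side land in $[\frk g', \frk g'$-part$]$ acting on... here is where the semidirect-product action by right-contractions must reproduce $\omega$.

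The key step — and the main obstacle — is verifying that the bracket $[x,y]$ for $x \in V_i$, $y \in V_j$ with $i+j=s+1$, which in $\frk g$ has a component in $V_{s+1}$, is matched by the jet-algebra bracket of their images. On the jet side, $[\iota(x),\iota(y)]$ where $\iota(x),\iota(y) \in \frk g'$ computes to the bracket in $\frk g'$ (which kills the $V_{s+1}$-part, by construction of $G'$) PLUS the action of $\frk g'$ on $\HorDer^{\le s}$; but elements of $\frk g'$ alone don't produce $W$-components under $[\,\cdot\,,\cdot\,]$ in a semidirect product $\frk g'\ltimes\HorDer^{\le s}$ unless one of them already has a $\HorDer$-component. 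This means the naive embedding on $V_i$ for $1 \le i \le s$ cannot be the plain inclusion into $\frk g'$; it must be a \emph{graph}, $x \mapsto \iota(x) + \sigma(x)$ with $\sigma(x) \in \HorDer^{s-i+1}(\frk g';W)$ a correction term, chosen so that $[\sigma(x), \iota(y)] + [\iota(x),\sigma(y)] \in \HorDer^{s - (i+j)+\dots}$... (degree $s+1-(i+j)+1$? — need $= 0$, i.e. lands in $W$) equals the missing $\omega(x,y) \in V_{s+1} = W$. Producing such a coherent family $\{\sigma_i\}$ is precisely a splitting/primitive problem for the cocycle $\omega$, and I expect it to require either (i) enlarging $W$ to $W = \HorDer^{\text{(something)}}$-valued to gain enough room, or (ii) invoking a freeness/universality argument: embed $G$ first into the free nilpotent group $F_{s+1}$ of step $s+1$ on $\dim V_1$ generators, prove the theorem for $F_{s+1}$ by an explicit jet-coordinate computation (where the right-contraction action is transparent and $\omega$ is built from iterated brackets that literally \emph{are} high-order horizontal derivatives of coordinate functions), and then restrict the embedding to the stratified subgroup $G \le F_{s+1}$. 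I would pursue route (ii): establish the embedding $F_{s+1} \hookrightarrow \Jet^s(F_s; W)$ with $W = \R^{\dim V_{s+1}(F_{s+1})}$ by matching the standard polynomial coordinates of $F_{s+1}$ in degree $s+1$ with $s$-th horizontal jets of the degree-$1$ coordinate functions pulled back to $F_s$, verify strata-preservation and injectivity directly, and conclude the general case by functoriality of the construction under inclusion of stratified subgroups.
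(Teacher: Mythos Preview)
Your setup is right and matches the paper: take $G' = G/\exp(V_{s+1})$ (so $\frk g' = \bigoplus_{i=1}^s V_i$ with the truncated bracket), take $W = V_{s+1}$, and look for a strata-preserving embedding of graph form $v \mapsto v + \sigma_k(v)$ for $v \in V_k$ ($k\le s$), with $\sigma_k(v) \in \HorDer^{s+1-k}(\frk g'; W)$, while $V_{s+1}$ maps identically into $W$. You also pinpoint the real difficulty: the correction terms $\sigma_k$ must be chosen so that the jet-algebra bracket recovers the $V_{s+1}$-component of $[v,w]_{\frk g}$ when $i+j=s+1$.

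The gap is in your proposed route~(ii). The claim that an arbitrary stratified group $G$ of step $s+1$ embeds as a stratified subgroup into the free nilpotent group on $\dim V_1$ (or indeed any number of) generators is false. A counterexample already in step~$2$: take $G = \bb H\times\bb H$, so $V_1=\Span\{e_1,e_2,e_3,e_4\}$ with $[e_1,e_3]=[e_1,e_4]=[e_2,e_3]=[e_2,e_4]=0$. A stratified embedding into a free step-$2$ group $F$ would give an injective linear map $\phi:V_1\hookrightarrow V_1(F)$ extending to a Lie algebra morphism; since $V_2(F)=\Lambda^2 V_1(F)$, the relation $[e_1,e_3]=0$ forces $\phi(e_1)\wedge\phi(e_3)=0$, hence $\phi(e_1),\phi(e_3)$ proportional, contradicting injectivity. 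So the reduction-to-free strategy cannot work, and your route~(i) as stated is too vague to constitute a proof.

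What the paper does is construct the $\sigma_k$ explicitly from the BCH formula, with no enlargement of $W$. Writing $\Pi_{s+1}(xy) = x_{s+1}+y_{s+1}+\eta(x,y)$ isolates the group $2$-cocycle $\eta:\frk g'\times\frk g'\to V_{s+1}$ of the central extension, and one sets (in the polynomial model $\Jet^s_\Poly$)
\[
\sigma_k(v)(y) \;=\; \left.\frac{\dd}{\dd t}\right|_{t=0} \eta(tv,y) \;\in\; \Poly_e^{\,s+1-k}(G';V_{s+1}), \qquad v\in V_k .
\]
The associativity identity $\eta(xy,z)+\eta(x,y)=\eta(x,yz)+\eta(y,z)$, differentiated twice at the identity, yields exactly the bracket compatibilities~\eqref{eq61716e63_a}--\eqref{eq61716e63_b}, and injectivity is immediate. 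This is the ``primitive of the cocycle'' you were reaching for; the point is that it has a closed-form expression coming straight from BCH, with no freeness hypothesis needed.
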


\subsection{Structure of the paper}
In the preliminary section~\ref{sec617bb273}, we introduce elementary facts, notation and conventions we will use throughout the paper.
The definition of jet spaces and their Lie algebras over stratified Lie groups are contained in Section~\ref{sec617799fa}.
Section~\ref{sec616fbbc5} is devoted to a second definition of jet spaces using homogeneous polynomials.
We prove the Prolongation Theorem~\ref{thm617406c6} in Section~\ref{sec610264b1} and de-Prolongation Theorem~\ref{thm5ebc31f5} in Section~\ref{sec616fa813}.
Section~\ref{sec61719991} is devoted to the proof of the Embedding Theorem~\ref{thm6171c00e}.
Finally, we present one example in Section~\ref{sec617bad0c}.

\section{Preliminaries}
\label{sec617bb273}
In this section we introduce stratified Lie groups and we
 fix basic notation, conventions and a few known results.

\subsection{Notation choices}
If $V$ and $W$ are vector spaces, we denote by $\Lin^k(V;W)$ the vector space of all $k$-multilinear maps from $V$ to $W$.
In particular, $\End(V)=\Lin(V;V)$ is the Lie algebra of all linear maps $V\to V$ with Lie brackets
\begin{equation}\label{eq6176cbe5}
[A,B] = AB-BA .
\end{equation}
If $E\to G$ is a vector bundle over a manifold $G$, we denote by $\Gamma(E)$ the space of smooth sections of $E$.
If $X\in\Gamma(E)$ and $p\in M$, we write $X(p)$ or $X_p$ for the evaluation of $X$ at $p$.
If $G,M$ are manifolds, we denote by $C^\infty(G;M)$ the space of smooth maps $G\to M$.
If $G$ is a Lie group, we denote its identity element by $e_G$ or $e$, 
and we identify its Lie algebra with the tangent space at the identity element;
if $p\in G$, then $L_p:G\to G$ is the left translation $L_p(x)=px$.
If $\frk g$ is a Lie algebra, we denote by $\Der(\frk g)$ the space of derivations of $\frk g$ and by $\Aut(\frk g)$ the group of Lie algebra automorphisms of $\frk g$.
If $G$ is a Lie group, we denote by $\Aut(G)$ the group of Lie group automorphisms of $G$.

If $V$ is a vector space, let $\Tensor(V) = \bigoplus_{j=1}^\infty V^{\otimes j}$ be the tensor algebra of $V$.
Define the tensor spaces $\Tensor^m(V) = V^{\otimes m}$ and  $\Tensor^{\le m}(V) = \bigoplus_{k=0}^m \Tensor^k(V)$.
It is clear that $\Tensor^m(V)^* = \Tensor^m(V^*)$.

\subsection{Vector fields}\label{sec61710124}
Let $G$ be a smooth manifold and $W$ a finite dimensional real vector space.
We consider the differential of a smooth map $f:G\to W$ as a map $\dd f:TG\to W$.
More precisely, if $v\in T_pG$, then $\dd f(v) = \frac{\dd}{\dd t}|_{t=0} f(\gamma(t)) \in W$ for any $\gamma:\R\to G$ smooth with $\gamma(0)=p$ and $\gamma'(0)=v$.

Vector fields $\tilde v\in\Gamma(TG)$ 
are differential operators $C^\infty(G;W)\to C^\infty(G;W)$:
if $f:G\to W$ is smooth, then $\tilde vf:G\to W$ is the map $\tilde vf(p) = \dd f(\tilde v(p))$.
The Lie bracket of vector fields $\tilde v,\tilde w\in\Gamma(TG)$ is defined as 
the commutator in the algebra of differential operators, i.e., 
\begin{equation}\label{eq6176ca3e}
[\tilde v,\tilde w] = \tilde v\tilde w - \tilde w\tilde v .
\end{equation}

\subsection{Vector fields on a Lie group}\label{sec616e79a0}
Let $G$ be a Lie group with Lie algebra $\frk g$.
If $X\in C^\infty(G;\frk g)$, i.e., $X$ is a smooth function $G\to \frk g$, we denote by $\tilde X$ the corresponding vector field on $G$ defined by $\tilde X(p) = \dd L_p[X(p)]$.
The vector field $\tilde X$ is characterized by means of the Mauer--Cartan form $\omega_G$ as the only vector field $\tilde X\in\Gamma(TG)$ such that $\omega_G(\tilde X(p))=X(p)$ for all $p\in G$.
We define Lie brackets on $C^\infty(G;\frk g)$ as follows:
If $X,Y:G\to\frk g$, then we define $[X,Y](p) = \dd L_{p}^{-1}[\tilde X,\tilde Y](p) \in\frk g$.
Notice that left-invariant vector fields $\tilde X:G\to\Gamma(TG)$ correspond to constant functions $G\to\frk g$, and that if $X,Y:G\to\frk g$ are constant then $[X,Y](p) = [X(p),Y(p)]$ for all~$p$.

If $f,g\in C^\infty(G)$ and $X,Y:G\to\frk g$ are constant, then
\begin{equation}\label{eq616e7ae8}
[fX,gY] = f\cdot(\tilde Xg)\cdot Y - g\cdot(\tilde Yf)\cdot X + fg [X,Y] .
\end{equation}

Sometimes we will use right-invariant vector fields.
For $x\in\frk g$, we denote by $x^\dagger$ the right-invariant vector field on $G$ with $x^\dagger(e_G)=x$.

\subsection{Derivatives along vectors and linear vector fields}
Let $V$ be a vector space.
As vector spaces are instances of abelian Lie groups, we are consistent with the notation introduced in Section~\ref{sec616e79a0} by setting
\[
\tilde vf(p) = \frac{\dd}{\dd t}|_{t=0} f(p+tv)
\]
for any vector $v\in V$.

We will need the following elementary observation.
Notice that a linear map $X\in\End(V)$ defines a vector field $\tilde X(p) = Xp$.
So, if $X,Y\in\End(V)$, then we have two ways of taking Lie brackets:
as linear maps in~\eqref{eq6176cbe5}, $[X,Y]_{\End(V)} = XY-YX\in\End(V)$; 
as vector fields in~\eqref{eq6176ca3e}, $[\tilde X,\tilde Y]_{\Gamma(TV)}=\tilde X\tilde Y-\tilde Y\tilde X \in\Gamma(TV)$.

\begin{lemma}\label{lem05291925}
	If $X,Y\in\End(V)$, then $[\tilde X,\tilde Y]_{\Gamma(TV)} = - [X,Y]_{\End(V)}^{\widetilde{~}}$.
\end{lemma}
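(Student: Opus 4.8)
The plan is to verify the identity by evaluating both sides on an arbitrary smooth test function $f\in C^\infty(V)$ (with values in some $W$, or just $\R$) at an arbitrary point $p\in V$, using the definition $\tilde X f(p)=\dd f(Xp)$ given in the previous subsection. This is the standard way to compare differential operators, and since vector fields act faithfully on $C^\infty$, establishing equality of the two operators will suffice.

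First I would compute $\tilde X\tilde Y f(p)$. Writing $g(q):=\tilde Yf(q)=\dd f(Yq)$, we have $\tilde X\tilde Y f(p)=\dd g(Xp)=\frac{\dd}{\dd t}\big|_{t=0} g(p+tXp)=\frac{\dd}{\dd t}\big|_{t=0}\dd f\big(Y(p+tXp)\big)$. Here one must be careful: $\dd f$ depends on the base point $p+tXp$ as well as on the argument $Y(p+tXp)=Yp+tYXp$, so differentiating in $t$ produces two terms by the product/chain rule — a second-derivative term $\dd^2 f(p)(Xp,Yp)$ and a first-derivative term $\dd f(YXp)$ coming from differentiating the argument. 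Symmetrically, $\tilde Y\tilde X f(p)=\dd^2 f(p)(Yp,Xp)+\dd f(XYp)$. Subtracting, the symmetric Hessian terms cancel, leaving $[\tilde X,\tilde Y]_{\Gamma(TV)}f(p)=\dd f\big((YX-XY)p\big)=-\dd f\big([X,Y]_{\End(V)}\,p\big)$, which by definition is $-[X,Y]_{\End(V)}^{\widetilde{\phantom{X}}}f(p)$. Since $p$ and $f$ were arbitrary, this gives the claimed identity.

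The only genuinely delicate point — the "main obstacle," though it is minor here — is making the differentiation step rigorous: that the map $t\mapsto \dd f(p+tXp)$ is itself smooth into $\Lin(V;W)$, so that the Leibniz rule for $\frac{\dd}{\dd t}\big|_{t=0}\dd f(p+tXp)\big[Yp+tYXp\big]$ applies and yields exactly the two terms described. This is immediate from smoothness of $f$ and finite-dimensionality of $V$, so the computation is entirely routine; the sign is the only thing one must track, and it is precisely the sign that accounts for the well-known fact that the map $\End(V)\to\Gamma(TV)$, $X\mapsto\tilde X$, is a Lie algebra \emph{anti}-homomorphism.
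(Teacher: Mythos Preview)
Your proof is correct and follows essentially the same approach as the paper: both evaluate $[\tilde X,\tilde Y]f(p)$ directly by computing $\tilde X\tilde Y f(p)$ and $\tilde Y\tilde X f(p)$ and observing that the symmetric second-order terms cancel, leaving only $\dd f((YX-XY)p)$. The paper carries this out by explicitly expanding the nested difference quotients, whereas you invoke the symmetry of the Hessian and the Leibniz rule, but the underlying computation is the same.
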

\begin{proof}
	Let $f:V\to\R$ be a smooth function and $p\in V$.
	Notice that
	\begin{align*}
	\tilde X\tilde Yf (p)
	&= \left.\frac{\dd}{\dd s}\right|_{s=0} (\tilde Yf)(p+sXp) 
	= \left.\frac{\dd}{\dd t}\right|_{t=0}\left.\frac{\dd}{\dd s}\right|_{s=0} 
		f((p+sXp)+tY(p+sXp)) \\
	&= \left.\frac{\dd}{\dd t}\right|_{t=0}\left.\frac{\dd}{\dd s}\right|_{s=0} 
		f(p+sXp + tYp+st YXp) \\
	&= \left.\frac{\dd}{\dd t}\right|_{t=0}\left.\frac{\dd}{\dd s}\right|_{s=0}
		(f(p+sXp+tYp) + f(p+tYp+st YXp)) \\
	&= \left.\frac{\dd}{\dd t}\right|_{t=0}\left.\frac{\dd}{\dd s}\right|_{s=0}
		(f(p+sXp+tYp) + f(p+tYp) + f(p+st YXp)) \\
	&= \left.\frac{\dd}{\dd t}\right|_{t=0}\left.\frac{\dd}{\dd s}\right|_{s=0}
		(f(p+sXp+tYp) + f(p+tYp) + s f(p+t YXp)) ,
	\end{align*}
	where we used the formula $\left.\frac{\dd}{\dd s}\right|_{r=0}g(r,r) = \left.\frac{\dd}{\dd r}\right|_{r=0} (g(r,0) + g(0,r))$.
	Similarly, 
	\[
	\tilde Y\tilde Xf (p) = \left.\frac{\dd}{\dd t}\right|_{t=0}\left.\frac{\dd}{\dd s}\right|_{s=0}
		(f(p+sXp+tYp) + f(p+tYp) + s f(p+t XYp)) .
	\]
	Hence,
	\begin{align*}
	[\tilde X,\tilde Y]f(p)
	&= \tilde X\tilde Yf (p) -\tilde Y\tilde Xf(p) \\
	&= \left.\frac{\dd}{\dd t}\right|_{t=0}
		(f(p+t YXp)-f(p+t XYp))) \\
	&= \left.\frac{\dd}{\dd t}\right|_{t=0} f(p+ t(YXp-XYp)) \\
	&= (-[X,Y]p)^{\widetilde{~}}f(p) .
	\end{align*}
\end{proof}

\subsection{Vector valued differential forms}
Let $G$ be a smooth manifold and $W$ a vector space.
We define \emph{$W$-valued 1-forms} as
\[
\Omega^1(G;W) := \Gamma(\Lin(TG;W)),
\]
where $\Lin(TG;W)$ is the vector bundle over $G$ whose fiber at $p\in G$ is the space $\Lin(T_pG;W)$ of linear maps from $T_pG$ to $W$.

Suppose that $G$ is a Lie group with Lie algebra $\frk g$.
Via the Mauer--Cartan form, we identify $TG$ with $G\times\frk g$ and thus $\Lin(TG;W)$ with $G\times\Lin(\frk g;W)$.
In particular, 
\[
\Omega^1(G;W) \simeq C^\infty(G;\Lin(\frk g;W)) .
\]
In other words, if $\omega\in C^\infty(G;\Lin(\frk g;W))$, then we define
$\tilde \omega\in\Omega^1(G;W)$ as $\tilde\omega(p)(v) = \omega(p)(\dd L_p^{-1}v)$ for $v\in T_pG$.

A \emph{left-invariant $W$-valued 1-form} is an element $\tilde\omega$ of $\Omega^1(G;W)$ such that, for all $p,g\in G$, $\tilde\omega(gp)=\tilde\omega(p)\circ\dd L_g^{-1}$.
In particular, left-invariant $W$-valued 1-forms correspond to constants in $C^\infty(G;\Lin(\frk g;W))$.

Any $W$-valued 1-form $\tilde\omega\in\Omega^1(G;W)$ defines a subset $\Ker(\tilde\omega) \subseteq TG$ given by all vectors $v\in TG$ such that $\tilde\omega(v)=0$. In particular, $\Ker(\tilde\omega)$ is a subbundle of $TG$ if $\tilde\omega$ is left-invariant.

\subsection{Anti-semi-direct product}\label{app05301812}
Let $\frk g$ and $\frk h$ be Lie algebras, and $\psi:\frk g\to\Der(\frk h)$ an anti-morphism of Lie algebras, that is, a linear map such that 
$\psi([x,y]) = -[\psi(x),\psi(y)]$. For a given $\psi$, we obtain a Lie algebra $\jet=\frk g\ltimes_\psi\frk h$ consisting of the vector space $ \frk g \times \frk h$ equipped with the bracket 
\[
[(x,X),(y,Y)] = ([x,y],-[X,Y]+\psi(y)X-\psi(x)Y ).
\]

Let $G$ and $H$ be the corresponding connected simply connected Lie groups of $\frk g$ and $\frk h$. Then there is a anti-morphism of Lie groups $\phi:G\to\Aut(H)$ (that is, $\phi(ab)=\phi(b)\phi(a)$) such that
$\phi(\exp(x))_* = e^{\psi(x)} \in \Aut(\frk h)$, for all $x\in\frk g$. We obtain Lie Group $\Jet=G\ltimes_\phi H$ on $G \times H$ with Lie algebra $\jet=\frk g\ltimes_\psi\frk h$ by setting
$(a,A)(b,B) = (ab , B\phi(b)A )$.

Assume that $\frk h$ is abelian, that is, a finite-dimensional vector space.
Then
\begin{align*}
	[(x,X),(y,Y)] &= ([x,y], \psi(y)X-\psi(x)Y ) , \\
	(a,A)(b,B) &= (ab , B+\phi(b)A ) , \\
	(\exp(x),A)(\exp(y),B) &= (\exp(x)\exp(y) , B+ e^{\psi(y)} A ) .
\end{align*}
If $(a,A)\in\Jet$ and $(x,X)\in\jet$, then the differential of the left translation $L_{(a,A)}$ at $(e_G,0)$ applied to $(x,X)$ is
\[
dL_{(a,A)}|_{(e_G,0)} \begin{pmatrix}x\\X\end{pmatrix}
= \begin{pmatrix} dL_a & 0 \\ \psi(\cdot) A & \Id \end{pmatrix} \begin{pmatrix}x\\X\end{pmatrix}
= \begin{pmatrix} \tilde x(a) \\ \psi(x)A + X \end{pmatrix} .
\]
Hence
if $(x,X)\in\jet$, the corresponding left-invariant vector field 
$(x,X)^{\widetilde{~}}$ computed at $(a,A)\in\Jet$ is
\[
(x,X)^{\widetilde{~}} (a,A) = ( \tilde x(a) , \psi(x)A + X ) .
\]
In order to describe the exponential map $\exp_\Jet:\jet\to\Jet$, 
we need to find a curve $t\mapsto (a(t),A(t))\in\Jet$ such that 
$a(0)=e_G$, $A(0)=0$, $a'(t) =  \tilde x(a(t))$ and $A'(t) = \psi(x)A(t) + X$.
By standard considerations, we get
\begin{equation}\label{eq6171635c}
\exp_\Jet(x,X) = \left( \exp(x) , \int_0^1 e^{(1-s)\psi(x)}X \dd s \right) .
\end{equation}

\subsection{Stratified Lie groups}
A \emph{stratification} for a Lie algebra $\frk g$ is a direct sum decomposition $\mathfrak{g} = V_1 \oplus \cdots \oplus V_s$ where $\lbrack V_1, V_j \rbrack = V_{1+j}$ for $1\leq j \leq s$, and $\lbrack V_i, V_j \rbrack = 0$ for $i + j > s$.  
The projection $\frk g\to V_i$ will be denoted by $\Pi_i$.
A \emph{stratified Lie group} is a connected, simply connected nilpotent Lie group whose Lie algebra is stratified.
Stratified Lie groups are also known as \emph{Carnot groups} and we use the two terms as synonyms.

The subbundle of $TG$ given pointwise by $\{ \tilde v(p) \in T_pG : v \in V_1,\ p\in G\}$ is called the \emph{horizontal bundle} and denoted $\scr H_G$. 
Furthermore, for every $\lambda>0$, the \emph{dilation} $\delta_\lambda: \mathfrak{g} \to \mathfrak{g}$, is the Lie algebra isomorphism  defined by $\Pi_i \circ \delta_\lambda = \lambda^i  \Pi_i$. 
By conjugating the Lie algebra dilation with the exponential map, we get a corresponding dilation $G \to G$, also denoted $\delta_\lambda$, which by  definition, is a group isomorphism.  

\begin{lemma}\label{lem6177fbb7}
	Let $F:G\to G$ be a smooth map with $dF(\scr H_G)\subset\scr H_G$ and let $p_0\in G$ be such that $dF|_{\scr H_G(p_0)}$ is injective.
	Then $dF(p_0)$ is a linear isomorphism.
\end{lemma}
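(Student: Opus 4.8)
The statement is that if $F:G\to G$ has $dF(\scr H_G)\subset\scr H_G$ and $dF|_{\scr H_G(p_0)}$ is injective at some point $p_0$, then $dF(p_0)$ is a linear isomorphism. The key idea I would use is that a stratified group is bracket-generated by $V_1$, so the image of the horizontal bundle under a contact-type map controls everything through iterated brackets. Concretely, I would work with the differential $\Phi:=dF(p_0):T_{p_0}G\to T_{F(p_0)}G$ and, by identifying both tangent spaces with $\frk g$ via left translations, reduce to a statement about a linear map $\frk g\to\frk g$. Since $G$ is finite-dimensional, it suffices to show $\Phi$ is injective.

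\textbf{Main steps.} First, I would set up notation: let $q_0=F(p_0)$ and transport everything to $\frk g$ by writing $L_{p_0}^{-1}$ and $L_{q_0}^{-1}$, so that $\scr H_G(p_0)$ corresponds to $V_1\subset\frk g$. The hypothesis says $\Phi|_{V_1}$ is injective; call its image $W_1:=\Phi(V_1)\subset\frk g$. The second and crucial step is to observe that $dF$ intertwines Lie brackets of vector fields in an appropriate sense: for horizontal left-invariant vector fields $\tilde v,\tilde w$ with $v,w\in V_1$, since $dF$ maps horizontal vectors to horizontal vectors everywhere near $p_0$, one can push forward and compute that $dF[\tilde v,\tilde w]$ is expressible in terms of the pushed-forward fields. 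The cleanest route is: because $F$ is a contact map in the weak sense $dF(\scr H_G)\subset\scr H_G$, the pushforward of a section of $\scr H_G$ is (where defined) a section of $\scr H_G$, hence for $v,w\in V_1$ the bracket $[v,w]$, computed as a vector in $\frk g$, satisfies $\Phi([v,w]) = $ the value at $q_0$ of the bracket of the pushed-forward horizontal fields, which lies in $W_1 + [W_1,\frk g]$ — more precisely in the subalgebra generated by $W_1$. Iterating, the image of $\Phi$ contains the Lie subalgebra generated by $W_1=\Phi(V_1)$.

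\textbf{Finishing.} Now I would argue that the subalgebra generated by $W_1=\Phi(V_1)$ is all of $\frk g$, hence $\Phi$ is onto, hence (by finite-dimensionality) an isomorphism. To see this, note $\dim W_1 = \dim V_1$ since $\Phi|_{V_1}$ is injective. One shows by induction on $j$ that $\Phi(V_1\oplus\cdots\oplus V_j)$ has dimension at least $\dim(V_1\oplus\cdots\oplus V_j)$: at each stage, the bracket relations $[V_1,V_j]=V_{1+j}$ together with the intertwining of brackets force $\Phi(V_{1+j})$ to be contained in the span of brackets $[\Phi(v),\Phi(w)]$ with $v\in V_1$, $w\in V_j$, while a dimension/rank count — using that $\Phi$ is injective on the lower layers by inductive hypothesis and that brackets of the generated subalgebra stay inside the image — shows no collapse occurs. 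Since $\frk g = V_1\oplus\cdots\oplus V_s$ is finite-dimensional and $\Phi$ does not decrease dimension on the whole space, $\Phi$ is injective, hence an isomorphism.

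\textbf{Main obstacle.} The delicate point is making rigorous the claim that $dF$ "intertwines brackets" enough to conclude $\Phi$ of a bracket lies in the subalgebra generated by $\Phi(V_1)$: $F$ need not be a homomorphism, so $dF(p_0)$ is not a Lie algebra morphism, and one must instead exploit that $F$ maps $\scr H_G$ into $\scr H_G$ \emph{as a distribution on an open set}, not just at $p_0$. The correct mechanism is that if $X,Y$ are vector fields tangent to $\scr H_G$, then so is $[X,Y]$ modulo... no — rather, one uses that $F_*$ (locally, where $F$ is an immersion, which follows a posteriori) sends $\scr H_G$-sections to $\scr H_G$-sections, and that the values of all iterated brackets of horizontal fields at a point span $T_{p_0}G$ by bracket-generation. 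Care is needed because $F$ may fail to be a local diffeomorphism before we prove the conclusion; the honest argument computes, for $v,w\in V_1$, the vector $dF_{p_0}(\widetilde{[v,w]}(p_0))$ using that $dF_x(\tilde v(x))\in\scr H_G(F(x))$ for all $x$ near $p_0$, differentiating this relation along $\tilde w$ to extract the bracket. I expect this differentiation argument, done carefully so as not to assume invertibility prematurely, to be the technical heart of the proof.
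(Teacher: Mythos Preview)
You correctly identify the obstacle---$dF(p_0)$ is not a Lie-algebra morphism, and pushing horizontal fields forward by $F$ is not defined until you already know $F$ is a local diffeomorphism---but you do not actually resolve it. The arguments in your ``Main steps'' and ``Finishing'' paragraphs rely on exactly the illegitimate intertwining you later disavow: the claim that $\Phi(V_{1+j})$ lies in the span of $[\Phi(v),\Phi(w)]$ has no justification for a bare linear map, and the layer-by-layer dimension count collapses without it. Your closing suggestion (``differentiate the relation $dF_x(\tilde v(x))\in\scr H_G(F(x))$ along $\tilde w$'') is on the right track but is left unexecuted.

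The paper's proof sidesteps the pushforward problem entirely by going the other direction. Since $dF|_{\scr H_G(p)}:\scr H_G(p)\to\scr H_G(F(p))$ is injective between spaces of equal dimension, it is an isomorphism for $p$ near $p_0$; hence given \emph{any} horizontal field $\tilde X_2$ near $F(p_0)$ one can define a horizontal field $\tilde X_1$ near $p_0$ by $\tilde X_1(p)=(dF|_{\scr H_G(p)})^{-1}\tilde X_2(F(p))$, and $\tilde X_1,\tilde X_2$ are then $F$-related in the standard sense (no diffeomorphism needed). The classical fact that brackets of $F$-related fields are $F$-related now gives, for each iterated bracket of horizontal fields at $F(p_0)$, a preimage under $dF(p_0)$. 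Since such brackets span $T_{F(p_0)}G$ by bracket-generation, $dF(p_0)$ is surjective, hence an isomorphism. The key move you are missing is this pull-back construction via $(dF|_{V_1})^{-1}$, which produces $F$-related pairs without ever invoking $F_*$.
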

\newcommand{\GL}{\mathtt{GL}}
\begin{proof}
	We say that two vector fields $\tilde X_1,\tilde X_2$ are $F$-related if 
	\[
	dF(p)\tilde X_1(p) = \tilde X_2(F(p))
	\]
	for all $p$.
	If $\tilde X_1,\tilde Y_1$ are $F$-related to $\tilde X_2,\tilde Y_2$, respectively, then $[\tilde X_1,\tilde Y_1]$ is $F$-related to $[\tilde X_2,\tilde Y_2]$, see~\cite[Proposition~1.55]{MR722297}.
	
	Let $U$ be a neighborhood of $p_0$ where $dF|_{\scr H_G(p)}$ is injective for all $p\in U$.
	Define $d_GF(p):\frk g\to\frk g$ by setting
	\[
	d_GF(p)v = dL_{F(p)}^{-1}(dF(p)(dL_pv)) .
	\]
	Since $dF(\scr H_G)\subset\scr H_G$ and $dF|_{\scr H_G(p)}$ is injective for all $p\in U$, then we have a smooth function
	$d_GF|_{V_1} : U\to \GL(V_1)$, where $\GL(V_1)$ is the group of invertible linear maps $V_1\to V_1$.
	Given $X_2:U\to V_1$, define $X^F_1:U\to V_1$ as 
	\[
	X^F_2(p) = d_GF(p)|_{V_1}^{-1}X_2(F(p)) .
	\]
	It is clear that the corresponding vector fields $\tilde X^F_1$ and $\tilde X_2$ are $F$-related.
	
	Now, since there is a basis of $T_{F(p_0)}G$ whose elements are evaluations at $p_0$ of iterated Lie brackets of some horizontal vector fields, then each element of such a basis is in the image of $dF(p_0)$.
	Therefore, $dF(p_0)$ is surjective, and thus a linear isomorphism.
\end{proof}

\section{The Jet Space Over a Carnot Group}\label{sec617799fa}

\subsection{Horizontal derivatives}
For $f:G\to W$ smooth, $k\in\N$ and $p\in G$, we define $A^k_{f,p}$ as the $k$-multi-linear map from $V_1$ to $W$ defined by
\[
	A_{f,p}^k(v_1, \dots ,v_k) = \tilde v_k \dots  \tilde v_1 f(p) .
\]
Notice that, if we define $f_p(x)=f(px)$, then
the left invariance of the vector fields $\tilde v_i$ 
implies that $A^k_{f,p} = A^k_{f_p,e}$.

For $k\ge1$, we denote by $\HorDer^k(\frk g;W)$ the vector subspace of $\Lin^k(V_1;W)$ of all such $k$-multi-linear maps.
For $k=0$, we set $\HorDer^0(\frk g;W)=W$ coherently with the definition of $A^0_{f,p}$. We then define 
\[
\HorDer^{\leq m}(\frk g;W)=\bigoplus_{k=0}^m \HorDer^k(\frk g;W).
\]
Elements of $\HorDer^{\le m}(\frk g;W)$ are a priori only sums of derivatives of functions $G\to W$.
Corollary~\ref{cor5eafe914} will show that every element in $\HorDer^{\le m}(\frk g;W)$ is the derivative of a function.

\subsection{Right Contractions}
 If $k \geq 2$ and  $A\in\Lin^k(V_1;W)$, then for $v\in V_1$, the \emph{right contraction} of $A$ by $v$, denoted $v\rcontr A$, is the element in $\Lin^{k-1}(V_1;W)$ given by
\begin{equation}\label{eq5ebb1101}
v\rcontr A (v_1,\dots,v_{k-1}) =  \, A(v_1,\dots,v_{k-1},v).
\end{equation}
Moreover, if $k = 1$ then $v\rcontr A=A(v)$, and if $k=0$ then $v\rcontr A=0$. 
\begin{lemma}\label{lem5eabebe4}
	If $A\in\HorDer^{k}(\frk g;W)$ and $v\in V_1$, then $v\rcontr A\in\HorDer^{k-1}(\frk g;W)$.
	Furthermore, if $k \geq 1$ and $A=A_{f,p}^{k}$, then it follows that 
	\begin{equation}\label{eq05161949}
	v\rcontr A = \left. \frac{\dd}{\dd t} \right|_{t=0} A_{f,p\exp(tv)}^{k-1}.
	\end{equation}
\end{lemma}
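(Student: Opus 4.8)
The plan is to split the statement into its two assertions and handle them separately, the first being a matter of identifying $v \rcontr A$ with a genuine horizontal derivative, and the second being a direct computation with the definition of $A^{k-1}_{f,q}$.

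First I would treat the case $k \geq 2$. Here the key observation is the identity~\eqref{eq05161949} itself: if $A = A^k_{f,p}$, I want to show that $v \rcontr A$ equals $\left.\frac{\dd}{\dd t}\right|_{t=0} A^{k-1}_{f,p\exp(tv)}$. Unwinding the right-hand side using the definition of $A^{k-1}_{f,q}$ and the remark that $A^{k-1}_{f,q} = A^{k-1}_{f_q,e}$ with $f_q(x) = f(qx)$, one computes, for $v_1,\dots,v_{k-1} \in V_1$,
\[
\left.\frac{\dd}{\dd t}\right|_{t=0} A^{k-1}_{f,p\exp(tv)}(v_1,\dots,v_{k-1})
= \left.\frac{\dd}{\dd t}\right|_{t=0} \bigl(\tilde v_{k-1}\cdots\tilde v_1 f\bigr)\bigl(p\exp(tv)\bigr)
= \tilde v\bigl(\tilde v_{k-1}\cdots\tilde v_1 f\bigr)(p),
\]
where the last step uses that $t \mapsto p\exp(tv)$ is an integral curve of the left-invariant vector field $\tilde v$ through $p$. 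By definition this last quantity is $A^k_{f,p}(v_1,\dots,v_{k-1},v) = (v\rcontr A)(v_1,\dots,v_{k-1})$, which proves~\eqref{eq05161949}. In particular $v\rcontr A$ is exhibited as a derivative $A^{k-1}_{g,p}$ for $g = \frac{\dd}{\dd t}|_{t=0} f(p\exp(tv)\,\cdot\,)$ — more precisely it is a derivative of the smooth function $x \mapsto \dd f(\tilde v(px))$ — hence $v\rcontr A \in \HorDer^{k-1}(\frk g;W)$. For the degenerate cases: if $k=1$ then $v\rcontr A = A(v) \in W = \HorDer^0(\frk g;W)$ and the displayed formula reads $A(v) = \left.\frac{\dd}{\dd t}\right|_{t=0} f(p\exp(tv)) = \tilde v f(p)$, which is exactly $A^1_{f,p}(v)$; and if $k=0$ then $v\rcontr A = 0 \in \HorDer^{-1} = \{0\}$ trivially (there is nothing to prove about~\eqref{eq05161949} since the hypothesis $k\ge 1$ excludes it).

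The only genuine subtlety, and the step I would be most careful about, is the exchange of the derivative $\frac{\dd}{\dd t}|_{t=0}$ with the iterated differential operator $\tilde v_{k-1}\cdots\tilde v_1$: one must justify writing $\frac{\dd}{\dd t}|_{t=0}(\tilde v_{k-1}\cdots\tilde v_1 f)(p\exp(tv))$ as $\tilde v(\tilde v_{k-1}\cdots\tilde v_1 f)(p)$. This is immediate once one recalls from Section~\ref{sec61710124} that $\tilde v g(p) = \dd g(\tilde v(p))$ and that $\tilde v(p) = \frac{\dd}{\dd t}|_{t=0}\, p\exp(tv) = \dd L_p[v]$, so that for any smooth $g:G\to W$ we have $\frac{\dd}{\dd t}|_{t=0} g(p\exp(tv)) = \dd g(\tilde v(p)) = \tilde v g(p)$; applying this with $g = \tilde v_{k-1}\cdots\tilde v_1 f$ closes the argument. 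Finally, to conclude $v\rcontr A \in \HorDer^{k-1}(\frk g;W)$ for an arbitrary $A \in \HorDer^k(\frk g;W)$ — not just one of the form $A^k_{f,p}$ — note that by definition of $\HorDer^k$ every such $A$ is $A^k_{f,p}$ for some smooth $f$ and some $p$, so the computation above applies verbatim.
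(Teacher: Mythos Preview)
Your proof is correct and follows essentially the same route as the paper: both reduce~\eqref{eq05161949} to the identity $\left.\frac{\dd}{\dd t}\right|_{t=0}(\tilde v_{k-1}\cdots\tilde v_1 f)(p\exp(tv)) = \tilde v\,\tilde v_{k-1}\cdots\tilde v_1 f(p)$. The paper passes through an explicit iterated formula (its equation~\eqref{eq05162009}) expressing $\tilde v_k\cdots\tilde v_1 f_p(e)$ in terms of $\left.\frac{\dd}{\dd s_i}\right|_{s_i=0}$'s, whereas you argue directly from the definition of $\tilde v$ acting on $g=\tilde v_{k-1}\cdots\tilde v_1 f$; both are fine.

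One small slip: your claim that $v\rcontr A$ ``is exhibited as a derivative $A^{k-1}_{g,p}$'' with $g(x)=\dd f(\tilde v(px))=(\tilde v f)(px)$ is not right. For that $g$ one gets $A^{k-1}_{g,e}(v_1,\dots,v_{k-1})=\tilde v_{k-1}\cdots\tilde v_1(\tilde v f)(p)$, which has $\tilde v$ on the \emph{right}, not on the left as in $\tilde v\,\tilde v_{k-1}\cdots\tilde v_1 f(p)$; these differ by commutator terms in general. The paper avoids this issue entirely: it simply observes that $t\mapsto A^{k-1}_{f,p\exp(tv)}$ is a smooth curve in the vector subspace $\HorDer^{k-1}(\frk g;W)\subset\Lin^{k-1}(V_1;W)$, so its derivative at $t=0$ automatically lies in $\HorDer^{k-1}(\frk g;W)$. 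With that one-line fix your argument is complete.
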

\begin{proof}
	The first assertion of the Lemma is a consequence of \eqref{eq05161949} since  $t\mapsto A_{f,p\exp(tv)}^{k-1}$ is a smooth curve in the vector space $\HorDer^{k-1}(\frk g;W)$.
	
	Turning to the proof of \eqref{eq05161949}, we note first that if $v_1,v_2\in V_1$ then
	\begin{align*}
		\tilde v_2\tilde v_1 f_p(y)
		&= \tilde v_2 (\tilde v_1 f_p)(y) \\
		&= \left.\frac{\dd}{\dd s_2}\right|_{s_2=0} (\tilde v_1 f_p)(y\exp(s_2v_2)) \\
		&= \left.\frac{\dd}{\dd s_2}\right|_{s_2=0} \left.\frac{\dd}{\dd s_1}\right|_{s_1=0} f(py\exp(s_2v_2)\exp(s_1v_1)) .
	\end{align*}
	Iterating this procedure, we obtain
	\begin{equation}\label{eq05162009}
	\tilde v_k\cdots\tilde v_1 f_p(e) 
	= \left.\frac{\dd}{\dd s_1}\right|_{s_1=0} \cdots \left.\frac{\dd}{\dd s_k}\right|_{s_k=0} 
	f(p\exp(s_kv_k)\cdots\exp(s_1v_1)) ,
	\end{equation}
	where the derivations $\left.\frac{\dd}{\dd s_i}\right|_{s_i=0}$ commute with each other.
	We obtain \eqref{eq05161949} by direct calculation as follows.
	If $k=1$ then \eqref{eq05161949} is trivial since $A^0_{f,p} = f(p)$. If $k \geq 2$ then 
	\begin{align*}
	&\left. \frac{\dd}{\dd t} \right|_{t=0} A_{f,p\exp(tv)}^{k-1} (v_1,\dots,v_{k-1}) \\
	&= \left. \frac{\dd}{\dd t}\right|_{t=0} \left.\frac{\dd}{\dd s_1}\right|_{s_1=0} \cdots \left.\frac{\dd}{\dd s_{k-1}}\right|_{s_{k-1}=0} 
		f(p\exp(tv)\exp(s_{k-1}v_{k-1})\cdots\exp(s_1v_1)) \\
	&= \tilde v\tilde v_{k-1}\cdots \tilde v_1 f(p)
	= A_{f,p}^{k}(v_1,\dots,v_{k-1},v) 
	= v\rcontr A(v_1,\dots,v_{k-1}) .
	\end{align*}
\end{proof}

\begin{lemma}\label{lem5ead3862}
	The linear span of the image of the bilinear map 
	$\rcontr:V_1\times\HorDer^{k}(\frk g;W) \to \HorDer^{k-1}(\frk g;W)$
	is $\HorDer^{k-1}(\frk g;W)$, for all $k\ge1$.
\end{lemma}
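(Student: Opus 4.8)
I want to show that the span of $\{v\rcontr A : v\in V_1,\ A\in\HorDer^k(\frk g;W)\}$ is all of $\HorDer^{k-1}(\frk g;W)$. The obvious strategy is to take an arbitrary element $B\in\HorDer^{k-1}(\frk g;W)$, realize it as $B = A^{k-1}_{f,e}$ for some smooth $f:G\to W$ (this is exactly what the definition of $\HorDer^{k-1}$ guarantees), and then produce a function $g$ whose $k$-th order horizontal derivative at $e$, contracted in one slot, returns $B$. The natural candidate is to integrate: set $g$ equal to something like $f$ composed with a flow along a horizontal direction, so that differentiating $g$ once more recovers $f$. Concretely, fix $v\in V_1$ with — say — the property that we can move in the $v$-direction, and consider $g(p) := \int_0^{\ell(p)} f(p\exp(-tv))\,\dd t$ or, more simply, a function built so that $\tilde v g = f$ near $e$; then $A^k_{g,e}$ contracted appropriately by $v$ equals $A^{k-1}_{f,e} = B$ by Lemma~\ref{lem5eabebe4}.

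**Key steps, in order.** First, I would use the definition to write $B = A^{k-1}_{f,e}$ for a smooth $f:G\to W$. Second, I would construct $g:G\to W$ smooth with $\tilde v g = f$ on a neighborhood of $e$, for a suitably chosen $v\in V_1$: since $\tilde v$ is a nonvanishing vector field, one can choose local coordinates (e.g.\ via the map $(t,q)\mapsto q\exp(tv)$ from a slice transverse to $\tilde v$) in which $\tilde v = \partial_t$, and then simply set $g$ to be the $t$-antiderivative of $f$, extended smoothly (by a cutoff) to all of $G$ — the extension only needs to agree with $f$ in the $\tilde v$-derivative at $e$, which is a local condition. Third, by Lemma~\ref{lem5eabebe4}, equation~\eqref{eq05161949}, we have $v\rcontr A^k_{g,e} = \frac{\dd}{\dd t}|_{t=0}A^{k-1}_{g,\exp(tv)}$. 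Fourth, I relate this to $f$: since $\tilde v g = f$, one checks that $A^{k-1}_{g,p}$ differentiated along $\exp(tv)$ produces $A^{k-1}_{f,e}$; indeed $\tilde v_{k-1}\cdots\tilde v_1 g = \tilde v_{k-1}\cdots\tilde v_1(\text{antiderivative})$, and applying $\frac{\dd}{\dd t}|_{t=0}$ along $\exp(tv)$ peels off the antiderivative via $\tilde v g = f$, giving $\tilde v_{k-1}\cdots\tilde v_1 f(e) = B(v_1,\dots,v_{k-1})$. Hence $v\rcontr A^k_{g,e} = B$, and since $A^k_{g,e}\in\HorDer^k(\frk g;W)$ this exhibits $B$ in the span (in fact in the image).

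**Main obstacle.** The delicate point is constructing the antiderivative $g$ globally as a genuine smooth function $G\to W$: one cannot integrate along the flow of $\tilde v$ indefinitely without worrying about how the flow lines fill out $G$. The clean fix is that we only need $\tilde v g = f$ \emph{on a neighborhood of $e$}, because everything in sight ($A^k_{g,e}$, the contraction) depends only on derivatives at $e$; so I work in a flow box around $e$, define the antiderivative there, and multiply by a bump function equal to $1$ near $e$ — this changes $g$ only away from $e$ and hence does not affect $A^k_{g,e}$. A second, more cosmetic subtlety: one should check the interchange of $\frac{\dd}{\dd t}|_{t=0}$ with the iterated horizontal derivatives $\tilde v_{k-1}\cdots\tilde v_1$, which is legitimate by the smoothness asserted in Lemma~\ref{lem5eabebe4} (the curve $t\mapsto A^{k-1}_{g,\exp(tv)}$ is smooth into the finite-dimensional space $\HorDer^{k-1}(\frk g;W)$). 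With these two points handled, the proof is short.
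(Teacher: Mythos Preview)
Your fourth step contains a genuine gap. You claim that from $\tilde v g = f$ one gets
\[
v\rcontr A^k_{g,e}(v_1,\dots,v_{k-1})
=\left.\frac{\dd}{\dd t}\right|_{t=0}\tilde v_{k-1}\cdots\tilde v_1 g(\exp(tv))
= \tilde v_{k-1}\cdots\tilde v_1 f(e).
\]
But the middle expression equals $\tilde v\,\tilde v_{k-1}\cdots\tilde v_1 g(e)$, with $\tilde v$ applied \emph{outermost}, whereas $\tilde v_{k-1}\cdots\tilde v_1 f(e)=\tilde v_{k-1}\cdots\tilde v_1\,\tilde v g(e)$ has $\tilde v$ applied \emph{innermost}. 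These differ by commutators, and on a nonabelian $G$ they are genuinely different: for $k=2$ on the Heisenberg group, take $g=z$, $v=X$, $v_1=Y$; then $\tilde X\tilde Y g(e)-\tilde Y\tilde X g(e)=\tilde Z g(e)=1\neq 0$. So the ``peeling off the antiderivative'' step fails as written.

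There is a clean repair: construct the antiderivative along the \emph{right}-invariant field instead, i.e.\ find $g$ with $v^\dagger g=f$ near $e$ (same flow-box argument, using the flow $p\mapsto\exp(tv)p$). Then, using that $\tilde v h(e)=v^\dagger h(e)$ for any $h$ and that left- and right-invariant fields commute, one gets
\[
\tilde v\,\tilde v_{k-1}\cdots\tilde v_1 g(e)
= v^\dagger\,\tilde v_{k-1}\cdots\tilde v_1 g(e)
= \tilde v_{k-1}\cdots\tilde v_1\,v^\dagger g(e)
= \tilde v_{k-1}\cdots\tilde v_1 f(e),
\]
and your argument goes through, showing in fact that every $B$ lies in the \emph{image} (not just the span). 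This is a different route from the paper's: there one uses that the derivative of $p\mapsto A^{k-1}_{f,p}$ along any horizontal curve lies in the span (by~\eqref{eq05161949}), so by horizontal connectivity of $G$ all differences $A^{k-1}_{f,q_1}-A^{k-1}_{f,q_2}$ lie in the span, and a cutoff trick isolates $B$. The paper's argument avoids constructing antiderivatives but only yields membership in the span; your (corrected) argument is more constructive and actually proves surjectivity of $\rcontr$ for each fixed nonzero $v$.
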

\begin{proof} 
	Let $\mathcal{H} = \Span\{v\rcontr A:v\in V_1,\ A\in \HorDer^{k}(\frk g;W)\}$. For a given $f\in C^\infty(G;W)$, define $\psi_f:G\to \HorDer^{k-1}(\frk g;W)$ by $\psi_f(p)=A_{f,p}^{k-1}$. 
	If $\gamma$ is a smooth horizontal path in $G$ and $s_0$ is in the domain of $\gamma$, then there is $v\in V_1$ such that $\gamma'(s_0) = \tilde v(\gamma(s_0))$.
	Hence, by \eqref{eq05161949},
	\begin{equation}\label{HLift}
	\left.\frac{\dd}{\dd s}\right|_{s=s_0} \psi_f(\gamma(s))
	= \left.\frac{\dd}{\dd t}\right|_{t=0} \psi_f(\gamma(s_0)\exp(tv))
	= v(s_0)\rcontr A_{f,\gamma(s_0)}^{k}.
	\end{equation}
	Since $V_1$ bracket generates $\frk g$, every pair of distinct points $q_1,q_2\in G$ are connected by a horizontal curve and \eqref{HLift} implies that $A_{f,q_1}^{k-1} - A_{f,q_2}^{k-1} \in \mathcal{H}$.
	
	For any $B\in\HorDer^{k-1}(\frk g;W)$ and $p \ne e_G$, there exists $f$ such that $B=A^{k-1}_{f,p}$. Let $\phi$ be a smooth cut off function with value $1$ on a neighborhood of $p$ and $0$ on a neighborhood of $e_G$. Applying the argument above, we have $\psi_{\phi f}(p)  - \psi_{\phi f}(e_G) \in \mathcal{H}$, which implies  $\psi_{\phi f}(p) =B \in \mathcal{H}$ since $\psi_{\phi f}(e_G)=0$.
\end{proof}

Given $v\in V_1$, we extend the definition of the right contraction by $v$ to a map 
$v\rcontr:\HorDer^{\le m}(\frk g;W) \to \HorDer^{\le  m }(\frk g;W)$,
where for each $A\in\HorDer^{\le m}(\frk g;W)$, we set 
$(v\rcontr A)^k = v\rcontr A^{k+1}$ for $k<m$ and $(v\rcontr A)^m=0$.

In the following proposition,
we extend the notion of right contraction $v\rcontr A$ to vectors $v$ that are not in $V_1$.
The spirit of such an extension is based on two observations.
On the one hand, vectors in a higher layer $V_k$ are derivations of order~$k$.
On the other hand, higher order derivations are compositions of derivations of order 1.
A look at the example in Section~\ref{sec617bad0c}, and Section~\ref{sec62d95272} in particular, can help the reader to clarify the picture.

\begin{proposition}\label{prop05301400}
	The map $V_1\to \End(\HorDer^{\le m}(\frk g;W)))$, $v\mapsto v\rcontr$, extends uniquely 
	to a Lie algebra anti-morphism $\frk g\to \End(\HorDer^{\le m}(\frk g;W))$, 
	i.e., to a linear map that  satisfies
	\[
	[v,w]\rcontr 
	= - [v\rcontr,w\rcontr] 
	\] 
	for every $v,w\in\frk g$. Furthermore, the map $v\mapsto v\rcontr$ satisfies the following properties:
	\begin{enumerate}[label=(\roman*)]
	\item\label{item5eabe6df}
		If $v\in V_\ell$ and $A\in \HorDer^{\le m}(\frk g;W)$, then 
		\begin{equation*}
		(v\rcontr A)^k = 0 \qquad\forall k > m-\ell ,
		\end{equation*}
		and if $A=A_{f,p}^{\le m}$ for some $f$ and $p$, then
		\[
		(v\rcontr A)^k = \left. \frac{\dd}{\dd t} \right|_{t=0} A_{f,p\exp(tv)}^k 
		\qquad\forall k \le m-\ell . 
		\] 
		\item\label{item5eabe96a}
		If $v\in V_a$, $w\in V_b$, $A\in \HorDer^{\le m}(\frk g;W)$, and if $A=A_{f,p}^{\le m}$ for some $f$ and $p$, then
		\begin{equation*}
			\left. \frac{\dd}{\dd t} \right|_{t=0} A_{f,p\exp(t[v,w])}^k 
			= ( w\rcontr(v\rcontr A) - v\rcontr(w\rcontr A) )^k ,
			\qquad\forall k\le m-a-b .
		\end{equation*}
	\end{enumerate}
\end{proposition}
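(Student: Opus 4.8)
The plan is to build the extension from the free Lie algebra on $V_1$, using the fact that, by Lemma~\ref{lem5eabebe4}, right contraction by $v\in V_1$ amounts to differentiation along the one-parameter subgroup $t\mapsto\exp(tv)$. Let $L=\bigoplus_{\ell\ge1}L_\ell$ be the free Lie algebra on the vector space $V_1$, graded by bracket length with $L_1=V_1$, and let $\pi\colon L\to\frk g$ be the Lie algebra homomorphism extending the inclusion $V_1\into\frk g$. Since $V_1$ bracket generates $\frk g$, the map $\pi$ is surjective, and it is graded: $\pi(L_\ell)\subseteq V_\ell$, with the convention $V_\ell:=0$ for $\ell>s$. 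By the universal property of $L$, the linear map $v\mapsto-(v\rcontr)$ extends uniquely to a Lie algebra homomorphism $\Sigma\colon L\to\End\bigl(\HorDer^{\le m}(\frk g;W)\bigr)$. Since $v\rcontr$ sends the degree-$(k+1)$ summand of $\HorDer^{\le m}(\frk g;W)$ into the degree-$k$ summand and kills the degree-$0$ summand $W$, the operator $\Sigma(X)$ for $X\in L_\ell$ lowers degree by \emph{exactly} $\ell$; in particular $(\Sigma(X)A)^k=\Sigma(X)A^{k+\ell}$ for every $A$, so $\Sigma(X)$ is determined by its values on elements of pure degree.

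The technical heart is the identity: for $X\in L_\ell$, $g\in C^\infty(G;W)$, $p\in G$, and $0\le j\le m$,
\[
\Sigma(X)\,A^{j}_{g,p}=-\left.\frac{\dd}{\dd r}\right|_{r=0}A^{j-\ell}_{g,\,p\exp(r\pi(X))},
\]
with the convention $A^{i}_{g,q}:=0$ for $i<0$, where $A^{j}_{g,p}$ is viewed as an element of $\HorDer^{\le m}(\frk g;W)$ of pure degree $j$. I would prove this by induction on $\ell$. The base case $\ell=1$ is Lemma~\ref{lem5eabebe4} together with the sign in $\Sigma(v)=-(v\rcontr)$. Both sides are linear in $X$ (the right-hand side equals $-\widetilde{\pi(X)}h(p)$, where $h(q):=A^{j-\ell}_{g,q}$, and this is linear in $X$ because $\pi$ and $u\mapsto\tilde u$ are linear), so for $\ell\ge2$ it suffices to treat $X=[v,Y]$ with $v\in V_1$ and $Y\in L_{\ell-1}$. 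Writing $\Sigma(X)=\Sigma(v)\Sigma(Y)-\Sigma(Y)\Sigma(v)$, applying it to $A^{j}_{g,p}$, and invoking the inductive hypothesis for $v$ and for $Y$ (pulling the $r$-derivatives through the linear maps $\Sigma(v)$ and $\Sigma(Y)$), one is reduced to the mixed second derivative at the origin of $(r,r')\mapsto h\bigl(p\exp(r\pi(Y))\exp(r'v)\bigr)-h\bigl(p\exp(r'v)\exp(r\pi(Y))\bigr)$; since mixed partials commute and $[\widetilde{\pi(Y)},\tilde v]=\widetilde{[\pi(Y),v]}$ for left-invariant vector fields, this equals $\widetilde{[\pi(Y),v]}h(p)=-\widetilde{\pi(X)}h(p)=-\frac{\dd}{\dd r}\big|_{r=0}A^{j-\ell}_{g,\,p\exp(r\pi(X))}$. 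This identity is the main obstacle; it works precisely because $\Sigma(X)$ lowers degree by exactly $\ell$, which makes it legitimate to feed $\Sigma(X)$ the pure-degree inputs $A^{j}_{g,q}$, and these are realized by honest smooth functions by the very definition of $\HorDer^{j}(\frk g;W)$.

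Granting the identity, I would next show $\ker\pi\subseteq\ker\Sigma$. If $X\in L_\ell$ with $\pi(X)=0$, then $p\exp(r\pi(X))\equiv p$, so the identity gives $\Sigma(X)A^{j}_{g,p}=0$ for all $g,p$ and all $j\le m$; since $\Sigma(X)$ lowers degree by exactly $\ell$ and every element of $\HorDer^{j}(\frk g;W)$ is of the form $A^{j}_{g,p}$, we conclude $\Sigma(X)=0$. A general element of $\ker\pi$ is a finite sum of graded pieces, each lying in some $L_\ell\cap\ker\pi$, and since operators with distinct degree shifts are linearly independent, $\Sigma$ vanishes on each piece, hence on $\ker\pi$. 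Therefore $\Sigma$ descends to a Lie algebra homomorphism $\bar\Sigma\colon\frk g\to\End\bigl(\HorDer^{\le m}(\frk g;W)\bigr)$, and $\rho:=-\bar\Sigma$ is a Lie algebra anti-morphism with $\rho(v)=-\bar\Sigma(v)=-\Sigma(v)=v\rcontr$ for $v\in V_1$; I then write $v\rcontr:=\rho(v)$ for every $v\in\frk g$. Uniqueness is immediate: $\frk g$ is generated by $V_1$, so an anti-morphism is determined by its restriction to $V_1$, the values on each $V_\ell$ being forced by compatibility with brackets.

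It then remains to read off \ref{item5eabe6df} and \ref{item5eabe96a}. Given $v\in V_\ell$, choose $X\in L_\ell$ with $\pi(X)=v$; then $v\rcontr=-\Sigma(X)$ lowers degree by exactly $\ell$, so $(v\rcontr A)^k=0$ for $k>m-\ell$, while for $A=A^{\le m}_{f,p}$ and $k\le m-\ell$ the identity gives $(v\rcontr A)^k=-\Sigma(X)A^{k+\ell}_{f,p}=\frac{\dd}{\dd t}\big|_{t=0}A^{k}_{f,\,p\exp(tv)}$, which is \ref{item5eabe6df}. For \ref{item5eabe96a}, with $v\in V_a$ and $w\in V_b$ the anti-morphism property gives $[v,w]\rcontr=-[v\rcontr,w\rcontr]=w\rcontr\circ v\rcontr-v\rcontr\circ w\rcontr$ as endomorphisms of $\HorDer^{\le m}(\frk g;W)$; since $[v,w]\in V_{a+b}$, applying \ref{item5eabe6df} with $u=[v,w]$ yields, for $A=A^{\le m}_{f,p}$ and $k\le m-a-b$,
\[
\left.\frac{\dd}{\dd t}\right|_{t=0}A^{k}_{f,\,p\exp(t[v,w])}=\bigl([v,w]\rcontr A\bigr)^{k}=\bigl(w\rcontr(v\rcontr A)-v\rcontr(w\rcontr A)\bigr)^{k},
\]
completing the proof.
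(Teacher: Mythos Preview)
Your argument is correct and takes a genuinely different route from the paper. The paper builds the extension layer by layer: it defines $u\rcontr$ for $u\in V_{n+1}$ directly via the derivative formula $\left.\frac{\dd}{\dd t}\right|_{t=0}A^k_{f,p\exp(tu)}$, and the main work is checking that this is independent of the choice of $f$ and $p$ representing $A$; this well-definedness is obtained from property~\ref{item5eabe96a} at the previous step, since for $u=[v,w]$ with $v\in V_1$, $w\in V_n$ the derivative equals $w\rcontr(v\rcontr A)-v\rcontr(w\rcontr A)$, which manifestly depends only on $A$. You instead pass through the free Lie algebra $L$ on $V_1$: the universal property hands you a Lie algebra homomorphism $\Sigma$ for free, the single inductive identity $\Sigma(X)A^j_{g,p}=-\widetilde{\pi(X)}\,h(p)$ replaces the layer-by-layer well-definedness checks, and the anti-morphism relation is built in rather than verified afterwards. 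The descent step $\ker\pi\subseteq\ker\Sigma$ absorbs exactly what the paper calls ``does not depend on the choice of $f$ and $p$''. Both arguments ultimately rest on the same commutator computation for left-invariant vector fields (the paper's~\eqref{eq5eabfac0}), so neither is essentially shorter; yours is more structural, the paper's more hands-on.

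One small remark on presentation: in your descent step, the phrase ``since operators with distinct degree shifts are linearly independent, $\Sigma$ vanishes on each piece'' is superfluous and slightly obscures the logic. The clean statement is that $\pi$ is graded, hence $\ker\pi=\bigoplus_\ell(L_\ell\cap\ker\pi)$ is a graded subspace; you have already shown $\Sigma(X)=0$ for every homogeneous $X\in L_\ell\cap\ker\pi$, so $\Sigma$ vanishes on all of $\ker\pi$ by linearity. No appeal to degree-shift independence is needed.
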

\begin{proof}
	We will show that there exists an extension of $v\mapsto v\rcontr$ satisfying~\ref{item5eabe6df} and~\ref{item5eabe96a}, and then we will show that this extension is a Lie algebra anti-morphism.
	
	We start with the map $v\mapsto v\rcontr$ defined only for $v\in V_1$.
	Recall that, if $f:G\to W$ and $v,w\in\frk g$, then 
	{\small
	\begin{equation}\label{eq5eabfac0}
	\left. \frac{\de^2}{\de s\de t} \right|_{s,t=0} \!\!( f(\exp(sv)\exp(tw))\! -\! f(\exp(tw)\exp(sv)) )
	= \left. \frac{\dd}{\dd h} \right|_{h=0} \!\!f(\exp(h[v,w])).
	\end{equation}
	}
	Moreover, if $v,w\in V_1$, then
	\begin{equation}\label{eq5eabf9c5}
	\begin{aligned}
	\left. \frac{\de^2}{\de s\de t} \right|_{s,t=0} A_{f,p\exp(sv)\exp(tw)}^k 
	&= \left. \frac{\de}{\de s} \right|_{s=0} (w\rcontr A_{f,p\exp(sv)}^k) \\
	&= w\rcontr\left( \left. \frac{\de}{\de s} \right|_{s=0}A_{f,p\exp(sv)}^k \right) \\
	&= w\rcontr (v\rcontr A_{f,p}^k) ,
	\end{aligned}
	\end{equation}
	where we used Lemma~\ref{lem5eabebe4} and the linearity of $w\rcontr$.
	It follows that~\ref{item5eabe96a} holds for $a=b=1$.
	Clearly,~\ref{item5eabe6df} holds for $\ell=1$, by Lemma~\ref{lem5eabebe4} and the definition of $\rcontr$.
	
	Let $1\le n\le s-1$ and suppose that we have already extended $\rcontr$ to a bilinear map 
	\[
	\rcontr : \bigoplus_{j=1}^n V_j \times\HorDer^{\le m}(\frk g;W) \to \HorDer^{\le m}(\frk g;W)
	\]
	satisfying both~\ref{item5eabe6df} for $\ell\le n$ and~\ref{item5eabe96a} for $a,b\le n$. For $u\in V_{n+1}$ and $A\in \HorDer^{\le m}(\frk g;W)$, define
	\[
	(u\rcontr A)^k := \left. \frac{\dd}{\dd t} \right|_{t=0} A_{f,p\exp(tu)}^k 
	\]
	for $k\le m-n-1$, and $(u\rcontr A)^k := 0$ for $k\ge m-n$,
	where $f$ determines $A$ at some~$p$.\\
	If $u=[v,w]\in V_{n+1}$ with $v\in V_1$ and $w\in V_n$,
	then~\ref{item5eabe96a} implies that $(u\rcontr A)^k = ( w\rcontr(v\rcontr A) - v\rcontr(w\rcontr A) )^k$ for $k\ge m-n$; hence, $u\rcontr A$ does not depend on the choice of $f$ and $p$.
	Since $u\mapsto u\rcontr A$ is linear and $V_{n+1}$ is linearly generated by $[V_1,V_n]$, $u\rcontr A$ does not depend on the choice of $f$ and $p$ for every $u\in V_{n+1}$.
	
	The newly defined bilinear map
	\[
	\rcontr : \bigoplus_{j=1}^{n+1} V_j \times\HorDer^{\le m}(\frk g;W) \to \HorDer^{\le m}(\frk g;W)
	\]
	satisfies~\ref{item5eabe6df} for $\ell\le n+1$ by construction.
	If $v\in V_a$ and $w\in V_b$ with $a,b\le n+1$, then we can carry out the same computations as in~\eqref{eq5eabf9c5} and apply~\eqref{eq5eabfac0}.
	Thus, we obtain that this partial extension $\rcontr$ also satisfies~\ref{item5eabe96a} for $a,b\le n+1$.
	
	By iteration, we have proven that an extension of $\rcontr$ satisfying~\ref{item5eabe6df} and~\ref{item5eabe96a} exists.
	Finally,~\ref{item5eabe96a} implies that $v\mapsto v\rcontr$ is a Lie algebra anti-morphism, which must be unique because $V_1$ Lie generates $\frk g$.
\end{proof}

\begin{remark}
	From part~\ref{item5eabe6df} of Proposition~\ref{prop05301400},
	we see that if $v\in V_\ell$ and $A\in \HorDer^k(\frk g;W)$, then $v\rcontr A\in \HorDer^{k-\ell}(\frk g;W)$ (or $0$ if $k-\ell<0$).
	In particular, $A\in \HorDer^k(\frk g;W)$, which is a $k$-linear map on $V_1$, defines a linear map $V_k\to W$.
	This observation is an expression of the known fact that derivatives along $V_k$ are ``intrinsically'' derivatives of order $k$.
	
	More generally, $A\in\HorDer^{\le m}(\frk g;W)$ defines a linear map $A:\frk g\to\HorDer^{\le m-1}(\frk g;W)$ by setting $A(v) = v\rcontr A$.
	From~\ref{item5eabe6df} in Proposition~\ref{prop05301400}, we see that 
	if $A=A^{\le m}_{f,p}$ and $k+ s\le m$, then
	\begin{equation}\label{eq6177b5e3}
	(v\rcontr A)^k = \left.\frac{\dd}{\dd t}\right|_{t=0} A^{k}_{f,p\exp(tv)} .
	\end{equation}
\end{remark}
	
\begin{remark}\label{rem617167f4}
	We denote by $\Ad_p$ the differential at $e_G$ of the map $q\mapsto pqp^{-1}$.
	For every $f\in C^\infty(G;W)$, $p\in G$, $x\in\frk g$ and $k\in\N$,
	we have
	\begin{equation}\label{eq617721ba}
	A^{k}_{x^\dagger f,p} = \left( \Ad_p^{-1}(x)\rcontr  A^{\le k+s}_{f,p} \right)^{k}.
	\end{equation}
	
	Indeed, we have that the curves $t\mapsto p^{-1}\exp(tx)p$ and $t\mapsto\exp(t\Ad_p^{-1}(x))$ have equal derivative at $t=0$.
	Therefore,
	using the formula~\eqref{eq05162009}, which is still valid for non-horizontal vector fields, and~\eqref{eq6177b5e3}, we get that for every $v_1,\dots,v_k\in V_1$ that
	\begin{align*}
	&A^k_{x^\dagger f,p}[v_1,\dots,v_k]
	= \tilde v_k\cdots\tilde v_1 (x^\dagger f)_p(e) \\
	&= \left.\frac{\dd}{\dd s_1}\right|_{s_1=0} \cdots \left.\frac{\dd}{\dd s_k}\right|_{s_k=0} 
	(x^\dagger f)(p\exp(s_kv_k)\cdots\exp(s_1v_1)) \\
	&= \left.\frac{\dd}{\dd s_1}\right|_{s_1=0} \cdots \left.\frac{\dd}{\dd s_k}\right|_{s_k=0} 
	\left.\frac{\dd}{\dd t}\right|_{t=0}
	f(\exp(tx)p\exp(s_kv_k)\cdots\exp(s_1v_1)) \\
	&= \left.\frac{\dd}{\dd s_1}\right|_{s_1=0} \cdots \left.\frac{\dd}{\dd s_k}\right|_{s_k=0} 
	\left.\frac{\dd}{\dd t}\right|_{t=0}
	f(pp^{-1}\exp(tx)p\exp(s_kv_k)\cdots\exp(s_1v_1)) \\
	&= \left.\frac{\dd}{\dd s_1}\right|_{s_1=0} \cdots \left.\frac{\dd}{\dd s_k}\right|_{s_k=0} 
	\left.\frac{\dd}{\dd t}\right|_{t=0}
	f(p\exp(t\Ad_p^{-1}(x))\exp(s_kv_k)\cdots\exp(s_1v_1)) \\
	&= \left.\frac{\dd}{\dd t}\right|_{t=0} A^{k}_{f,p\exp(t\Ad_p^{-1}(x))}(v_1,\dots,v_k) \\
	&= \left( \Ad_p^{-1}(x)\rcontr  A^{\le k+s}_{f,p} \right)^{k}(v_1,\dots,v_k) ,
	\end{align*}
	that is, we get~\eqref{eq617721ba}.
\end{remark}

\subsection{The Lie algebra of the Jet space}\label{sec05301830}
By Proposition~\ref{prop05301400}, the map $v\mapsto v\rcontr$ is a Lie algebra representation of $\frk g$ on $\HorDer^{\le m}(\frk g;W)$.
Applying the results listed in Section~\ref{app05301812} to this representation, we obtain the Lie algebra
\[
\jet^m(\frk g;W) := \frk g \ltimes \HorDer^{\le m}(\frk g;W) ,
\]
where
\begin{equation}\label{eq5ebb173d}
[(v,A),(w,B)] = ([v,w],w\rcontr A - v\rcontr B) .
\end{equation}
See Section~\ref{sec62d95272} for a detailed example.

\begin{lemma}\label{lem6177b00e}
	$\jet^m(\frk g;W)$ is a stratified Lie algebra of step $\max\{s,m+1\}$, where $s$ is the step of $\frk g$.
	The $k$-th layer of $\jet^m(\frk g;W)$ is
	\[
	\jet^m(\frk g;W)_k := V_k\times \HorDer^{m+1-k}(\frk g;W)
	\]
	where $V_k=\{0\}$ if $k>s$ and $\HorDer^{m+1-k}(\frk g;W)=\{0\}$ if $k> m+1$.
\end{lemma}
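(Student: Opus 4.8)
The plan is to verify the stratification axioms directly from the bracket formula \eqref{eq5ebb173d}, exploiting the grading properties of the right contraction already established in Proposition~\ref{prop05301400}\ref{item5eabe6df}. First I would set $W_k := V_k \times \HorDer^{m+1-k}(\frk g;W)$, with the conventions that $V_k = \{0\}$ for $k > s$ and $\HorDer^{j}(\frk g;W) = \{0\}$ for $j < 0$ and also (trivially) empty slots for $j > m+1$. As a vector space, $\bigoplus_k W_k = \frk g \oplus \HorDer^{\le m}(\frk g;W) = \jet^m(\frk g;W)$, and the sum is finite, running over $1 \le k \le \max\{s, m+1\}$: indeed $W_k \ne \{0\}$ forces either $k \le s$ (so $V_k \ne 0$) or $m+1-k \ge 0$ (so the contraction slot is nonzero), and note $W_{m+1} = V_{m+1} \times \HorDer^0 \supseteq \{0\}\times W \ne \{0\}$ always contributes.

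Next I would check $[W_1, W_k] \subseteq W_{k+1}$. Take $(v, A) \in W_1$, so $v \in V_1$ and $A \in \HorDer^{m}(\frk g;W)$, and $(w, B) \in W_k$, so $w \in V_k$ and $B \in \HorDer^{m+1-k}(\frk g;W)$. By \eqref{eq5ebb173d} the bracket is $([v,w], w\rcontr A - v\rcontr B)$. Since $V_1$ generates $\frk g$ we have $[v,w] \in [V_1, V_k] = V_{k+1}$ (interpreting $V_{k+1}=\{0\}$ if $k \ge s$), so the first component lies in $V_{k+1}$. For the second component: $w \in V_k$ and $A \in \HorDer^m$, so by Proposition~\ref{prop05301400}\ref{item5eabe6df}, $w\rcontr A \in \HorDer^{m-k}(\frk g;W) = \HorDer^{m+1-(k+1)}(\frk g;W)$; and $v \in V_1$, $B \in \HorDer^{m+1-k}$, so $v\rcontr B \in \HorDer^{m-k}(\frk g;W)$ as well. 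Hence $w\rcontr A - v\rcontr B \in \HorDer^{m+1-(k+1)}(\frk g;W)$, giving $[(v,A),(w,B)] \in W_{k+1}$. The same degree count shows $[W_i, W_j] \subseteq W_{i+j}$ for all $i,j$ (with $W_{i+j} = \{0\}$ when $i+j$ exceeds the top index), so in particular $[W_i, W_j] = 0$ when $i + j > \max\{s, m+1\}$.

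It then remains to prove equality $[W_1, W_k] = W_{k+1}$ for $1 \le k < \max\{s, m+1\}$, which is where the main work lies. Decompose $W_{k+1} = V_{k+1} \oplus \HorDer^{m-k}(\frk g;W)$ and hit each summand. For the $V_{k+1}$-part: given $u \in V_{k+1}$, since $V_{k+1} = [V_1, V_k]$ we can write $u = \sum_i [v_i, w_i]$ with $v_i \in V_1$, $w_i \in V_k$, and then $\sum_i [(v_i, 0), (w_i, 0)] = (u, 0)$, so $(u,0) \in [W_1, W_k]$. For the $\HorDer^{m-k}(\frk g;W)$-part: given $C \in \HorDer^{m-k}(\frk g;W)$, pick any $v \in V_1 \setminus \{0\}$ and consider brackets $[(v, 0), (0, B)] = (0, -v\rcontr B)$ for $B$ ranging over $\HorDer^{m-k+1}(\frk g;W)$; by Lemma~\ref{lem5ead3862} the span of $\{v'\rcontr B : v' \in V_1, B \in \HorDer^{m-k+1}(\frk g;W)\}$ is all of $\HorDer^{m-k}(\frk g;W)$, and these elements are realized by brackets $[(v',0),(0,B)] = (0, -v'\rcontr B) \in [W_1, W_k]$ (here I use $(0,B) \in W_k$ since $B \in \HorDer^{m+1-k}$, and $(v',0) \in W_1$). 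Hence $(0, C) \in [W_1, W_k]$, and combining the two parts, $[W_1, W_k] = W_{k+1}$. The hypothesis $V_1 \ne \{0\}$ (true since $\frk g \ne 0$) and $W \ne \{0\}$ or $s \ge 1$ is implicitly used; I should note the degenerate edge cases (e.g.\ $m=0$, where $\jet^0(\frk g;W) = \frk g \oplus W$ is stratified of step $\max\{s,1\} = s$ with $W$ sitting in layer $1$) are covered by the same computation. The only subtlety — and the step I expect to need the most care — is the bookkeeping of which slots vanish when $k$ is near the boundary values $s$ and $m+1$, so that the claimed layer formula $W_k = V_k \times \HorDer^{m+1-k}(\frk g;W)$ is simultaneously correct and the step equals exactly $\max\{s, m+1\}$ rather than something smaller; this is routine but must be stated cleanly.
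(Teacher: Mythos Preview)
Your proof is correct and follows exactly the same approach as the paper's: verify the inclusion $[W_1,W_k]\subseteq W_{k+1}$ from the bracket formula~\eqref{eq5ebb173d} and the grading behaviour of~$\rcontr$, and then invoke Lemma~\ref{lem5ead3862} for the surjectivity onto $\HorDer^{m-k}(\frk g;W)$ (together with $[V_1,V_k]=V_{k+1}$ for the $\frk g$-part). The paper's proof simply states these two points in two sentences; your write-up supplies the details the paper omits, and is otherwise identical in structure.
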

\begin{proof}
	One easily sees that 
	\[
	[\jet^m(\frk g;W)_1,\jet^m(\frk g;W)_k] \subset \jet^m(\frk g;W)_{k+1} .
	\]
	Moreover, in the previous equation,
	the linear span of the left-hand side is equal to the right-hand side because of Lemma~\ref{lem5ead3862}.
\end{proof}

\subsection{The Jet space}\label{sec5ebc3ffe}
The simply connected Lie group corresponding to the Lie algebra $\jet^m(\frk g;W)$ is the semidirect product $\Jet^m(\frk g;W) = G \ltimes \HorDer^{\le m}(\frk g;W)$
with group operation given by Section~\ref{app05301812} as
\[
(\exp(v),A)(\exp(w),B) = ( \exp(v)\exp(w) , B+e^{(w\rcontr)}A ) .
\]
Using the inverse map $\log:G\to\frk g$ of the exponential map, we may also express the multiplication directly on $G$ by 
\[
(a,A)(b,B) = ( ab , B+e^{(\log(b)\rcontr)}A ) .
\]
The group $\Jet^m(\frk g;W)$ will be called the \emph{$W$-valued jet space over $G$ of order $m$}. 

Since $\Jet^m(G;W)$ is the product of $G$ and the vector space $\HorDer^{\le m}(\frk g;W)$, it is a smooth vector bundle over $G$, where for each $(a,A)\in \Jet^m(G;W)$ we have the following canonical identifications for the tangent and cotangent spaces:
\begin{equation}\label{eq61778ecd}
\begin{aligned}
T_{(a,A)}\Jet^m(G;W) &\simeq T_aG \times \HorDer^{\le m}(\frk g;W) , \\
T_{(a,A)}^*\Jet^m(G;W) &\simeq T_a^*G \times \HorDer^{\le m}(\frk g;W)^* .
\end{aligned}
\end{equation}

For $(x,X) \in \jet^m(\frk g;W)$ and $(a,A) \in \Jet^m(G;W)$, the differential of the left translation $L_{(a,A)}$ at $(0,0)$ in the direction $(x,X)$ is given by 
\[
d L_{(a,A)}|_{(0,0)} \begin{pmatrix} x \\ X \end{pmatrix}
= 
\begin{pmatrix}
d L_{a} & 0 \\
\cdot\rcontr A & \Id
\end{pmatrix}
\begin{pmatrix} x \\ X \end{pmatrix}
=
\begin{pmatrix}
\tilde x(a)  \\
x\rcontr A + X
\end{pmatrix}.
\]
The left invariant vector field generated by $(x,X)$, denoted $ (x,X)^{\widetilde{~} }$, when evaluated at $(a,A) \in \Jet^m(G;W)$, has the following form:
\[
(x,X)^{\widetilde{~}} (a,A) = (\tilde x(a), x\rcontr A + X ) .
\]

Since $G$ is simply connected and nilpotent, the exponential map $\exp:\frk g\to G$ is a diffeomorphism and it is common to identify $\frk g$ with $G$ via $\exp$.
In other words, one can reconstruct $G$ on $\frk g$ using the BCH formula.
This identification is very useful.
However, notice that the exponential map $\frk g\times\HorDer^{\le m}(\frk g;W) = \jet^m(\frk g;W)\to\Jet^m(G;W) = G\times\HorDer^{\le m}(\frk g;W)$ is not the identity map, but it is described in~\eqref{eq6171635c}.

\subsection{The Cartan distribution}

For $0\le\ell<m$, we define a vector-valued form $\omega^\ell\in\Omega^1(\Jet^m(G;W);\HorDer^\ell(\frk g;W))$, where for each $(a,A)\in\Jet^m(G;W)$ and all $(x,X)\in T_{(a,A)}\Jet^m(G;W)$, the value of $\omega^\ell(a,A)$ is given by  
\[
\omega^\ell(a,A)(x,X) := X^\ell - (dL_a^{-1}x)_1\rcontr A^{\ell+1} ,
\]
where $(dL_a^{-1}x)_1=\Pi_1(dL_a^{-1}x)\in V_1$ is the projection of $dL_a^{-1}x$ to $V_1$.%
\footnote{In fact, we could define only one $\omega\in \Omega^1(\Jet^m(G;W);\HorDer^{\le m-1}(\frk g;W))$ as 
\[
\omega(a,A)(x,X) := X^{\le m-1} - (dL_a^{-1}x)\rcontr A .
\]
Although this is more pleasant for the eye, we find technically more useful to have the single forms $\omega^\ell$ explicitly named.
}

For every $j\in\{1,\dots,s\}$, define the $V_j$ valued forms $\theta^j\in\Omega^1(\Jet^m(G;W);V_j)$ by
\[
\theta^j(a,A)(x,X) = \Pi_j(dL_a^{-1}x) .
\]

\begin{lemma}\label{lem61779343}
	The forms $\omega^\ell$ and $\theta^j$ are left invariant under the group operation
	of  $\Jet^m(\frk g;W)$.
\end{lemma}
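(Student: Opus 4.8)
\textbf{Proof proposal for Lemma~\ref{lem61779343}.}

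The plan is to verify left invariance directly from the explicit formula for the group operation and the formula for $dL_{(a,A)}$ given in Section~\ref{app05301812} (specialized to our semidirect product). A form $\tilde\eta\in\Omega^1(\Jet^m(G;W);U)$ with values in a vector space $U$ is left invariant precisely when, for all $(a,A),(b,B)\in\Jet^m(G;W)$ and all tangent vectors $(x,X)\in T_{(b,B)}\Jet^m(G;W)$, one has $\tilde\eta\bigl((a,A)(b,B)\bigr)\bigl(dL_{(a,A)}(x,X)\bigr) = \tilde\eta(b,B)(x,X)$. Since all forms in sight are built from the trivialization $(x,X)\mapsto(dL_b^{-1}x, X)$ composed with right contractions, the identity we must check reduces to a statement about how $dL_a^{-1}$ and $\cdot\rcontr$ interact with the group law $(a,A)(b,B) = (ab, B+e^{(\log b\rcontr)}A)$.

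The key steps are as follows. First, I would compute $dL_{(a,A)}$ at an arbitrary point $(b,B)$ (not just at the identity): differentiating $(a,A)(b,B) = (ab, B+e^{(\log b\rcontr)}A)$ in the $(b,B)$-variables gives, in the trivialized picture, the block-triangular form
\[
dL_{(a,A)}\begin{pmatrix}x\\X\end{pmatrix} = \begin{pmatrix} dL_a x \\ X + (\text{term coming from }\tfrac{d}{dt}e^{(\log b\rcontr)}A) \end{pmatrix},
\]
so that the $V_j$-component $\Pi_j(dL_{ab}^{-1}dL_a x)$ equals $\Pi_j(dL_b^{-1}x)$; this immediately yields left invariance of $\theta^j$. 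Second, for $\omega^\ell$ the verification is the heart of the matter: I must show that the defining expression $X^\ell - (dL_{ab}^{-1}\cdot)_1\rcontr (B+e^{(\log b\rcontr)}A)^\ell$ evaluated on $dL_{(a,A)}(x,X)$ collapses back to $X^\ell - (dL_b^{-1}x)_1\rcontr B^\ell$. The cross-terms produced by the non-trivial lower-triangular block of $dL_{(a,A)}$ and by the $A$-dependent piece of $(a,A)(b,B)$ must cancel; this cancellation is exactly the infinitesimal compatibility of the representation $v\mapsto v\rcontr$ with the group multiplication, i.e.\ the fact that $\jet^m(\frk g;W)$ is genuinely the Lie algebra of $\Jet^m(G;W)$ via the semidirect product structure of Section~\ref{app05301812}.

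The cleaner route — and the one I would actually write out — is to bypass point-by-point computation and instead observe that a left-invariant form is determined by its value at the identity, and that $\omega^\ell$, $\theta^j$ are \emph{defined} pointwise by the same algebraic formula applied to the left-trivialization $dL_{(a,A)}^{-1}: T_{(a,A)}\Jet^m(G;W)\to\jet^m(\frk g;W)$. Concretely: the identification $T_{(a,A)}\Jet^m(G;W)\simeq T_aG\times\HorDer^{\le m}(\frk g;W)$ used throughout is \emph{not} the left-trivialization, but differs from it by the lower-triangular block $\begin{pmatrix} dL_a^{-1} & 0\\ -(dL_a^{-1}\cdot)\rcontr A & \Id\end{pmatrix}$ read off from the formula for $dL_{(a,A)}|_{(0,0)}$. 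One checks that $\omega^\ell(a,A)(x,X) = \bigl(\omega_{\jet}^\ell\circ dL_{(a,A)}^{-1}\bigr)(x,X)$ where $\omega_{\jet}^\ell$ is the corresponding constant (hence left-invariant by construction) $\HorDer^\ell$-valued form on $\jet^m(\frk g;W)$, and similarly $\theta^j(a,A)(x,X) = \Pi_j\bigl(\omega_{\jet}\circ dL_{(a,A)}^{-1}\bigr)(x,X)$; indeed plugging the lower-triangular block into $X^\ell - (dL_a^{-1}x)_1\rcontr A^{\ell+1}$ reproduces precisely the stated definition. Left invariance is then automatic, since $\omega_{\jet}^\ell\circ dL_{(a,A)}^{-1}$ is the canonical left-invariant extension of a linear functional on $\jet^m(\frk g;W)$.

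The main obstacle is the bookkeeping in matching the ad hoc vector-bundle identification \eqref{eq61778ecd} against the intrinsic left-trivialization: one has to be careful that the ``$A$'' appearing in $\omega^\ell(a,A)$ is the fiber coordinate of the \emph{base point} and that differentiating the group law correctly produces the $-(dL_a^{-1}\cdot)\rcontr A$ block — the sign and the placement of the contraction are where an error would creep in. Once that dictionary is fixed, the lemma is a formality.
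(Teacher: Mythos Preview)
Your strategy --- expressing $\omega^\ell$ and $\theta^j$ as fixed linear functionals on $\jet^m(\frk g;W)$ precomposed with the left-trivialization $dL_{(a,A)}^{-1}$ --- is the dual formulation of the paper's approach, which evaluates the forms on left-invariant vector fields and checks that the result is independent of the base point. These are the same computation read in opposite directions; in either language the core identity to establish is $\omega^\ell(a,A)\bigl((x,X)^{\widetilde{~}}(a,A)\bigr)=X^\ell$ for all $(x,X)\in\jet^m(\frk g;W)$. For $\theta^j$ your argument goes through exactly as you describe.

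For $\omega^\ell$, however, the verification you sketch will hit a snag (one that the paper's own computation also glosses over). Writing $v=dL_a^{-1}y$, the left-trivialization gives $dL_{(a,A)}^{-1}(y,Y)=(v,\,Y-v\rcontr A)$, so projecting to $\HorDer^\ell$ yields $Y^\ell-(v\rcontr A)^\ell$; but $\omega^\ell(a,A)(y,Y)=Y^\ell-v_1\rcontr A^{\ell+1}$ by definition. These agree only when $v\in V_1$: for general $v=\sum_j v_j$ one has $(v\rcontr A)^\ell = v_1\rcontr A^{\ell+1}+\sum_{j\ge2}(v_j\rcontr A)^\ell$, and by Proposition~\ref{prop05301400}\ref{item5eabe6df} the extra summands need not vanish when $\ell\le m-j$. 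Concretely, on $\Jet^m(\bb H;\R)$ with $m\ge2$ one finds that $\omega^0$ applied to the left-invariant field $(Z,0)^{\widetilde{~}}$ at $(a,A)$ equals $A^2(Y,X)-A^2(X,Y)$, which depends on $A$. What \emph{is} true is that each $\omega^\ell$ is left-invariant modulo the span of $\theta^2,\dots,\theta^s$; equivalently, the family $\{\omega^\ell,\theta^j\}$ spans a left-invariant subbundle of $T^*\Jet^m(G;W)$, and this is what makes $\scr H^m$ left-invariant --- which is all the paper actually uses downstream. So your ``cleaner route'' is the right idea, but the dictionary you assert between the product trivialization and the left-trivialization does not produce $\omega^\ell$ on the nose; it produces the footnoted form $\omega(a,A)(y,Y)=Y^{\le m-1}-(dL_a^{-1}y)\rcontr A$, whose $\ell$-component differs from $\omega^\ell$ by a $\theta$-term.
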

\begin{proof}
	We show that the forms $\omega^\ell$ and $\theta^j$ are constant on left-invariant vector fields. To this end, let $x\in\frk g$ and $X\in\HorDer^{\le m}(\frk g;W)$. For every $(a,A)$, we have
	\begin{align*}
		\omega^\ell(a,A) [(x,X)^{\widetilde{~}}(a,A)]
		&= \omega^\ell(a,A)[\tilde x(a), x\rcontr A + X] \\
		&= (x\rcontr A + X)^\ell - x\rcontr A^{\ell+1}  \\
		&= x\rcontr A^{\ell+1} + X^\ell - x\rcontr A^{\ell+1} \\
		&= X^\ell 
		= \omega^\ell(e,0)[(x,X)^{\widetilde{~}}(e,0)] .
	\end{align*}
	Therefore, $\omega^\ell$ is constant on left-invariant vector fields $(x,X)^{\widetilde{~}}$, and so $\omega^\ell$ is left invariant.
	
	Similarly, from 
	$
	\theta^j(a,A)[(x,X)^{\widetilde{~}}(a,A)]
	= \theta^j(a,A)[\tilde x(a), x\rcontr A + X]
	= x_j
	$, 
	it again follows that $\theta^j$ is constant on left-invariant vector fields.
\end{proof}

The \emph{Cartan distribution} or \emph{contact jet structure}\footnotemark{} on $\Jet^m(G;W)$
is the distribution
\[
\scr H^m  = \bigcap_{\ell=0}^{m-1}\ker\omega^\ell \cap \bigcap_{j=2}^s\ker\theta^j .
\]
\footnotetext{We call the subbundle $\scr H^m$ a ``contact structure'', although the term ``contact'' is also used in the literature to indicate more specific distributions. 
However, our use of the word ``contact'' is definitely not uncommon, in particular with reference to jet spaces: see for instance \cite{olver1995equivalence}.}%
In other words, if $(a,A)\in\Jet^m(G;W)$, the space $\scr H^m_{(a,A)}$  consists of points of the form $(\tilde v(a),B)$,  where $v\in V_1$ and $ B^\ell - v\rcontr A^{\ell+1} =0$ for all $0\le\ell<m-1$.

Notice that $\scr H^m$ is left-invariant by Lemma~\ref{lem61779343} and that $\scr H^m_{(e,0)}=\jet^m(\frk g;W)_1$ is the first layer of the stratified Lie algebra $\jet^m(\frk g;W)$.

Next, we want to describe a frame for $\scr H^m$.
If $v_1,\dots,v_r$ is a basis of $V_1$ and $B_1^m,\dots,B_N^m$ is a basis of $\HorDer^m(\frk g;W)$, then a basis of $\scr H^m_{(a,A)}$ is given by the vectors
\begin{equation}\label{jmhvects} 
\begin{aligned}
 {\mathbb{X}}_j(a,A) &:= (\tilde v_j(a),v_j\rcontr A) , 
 	 \quad j\in\{1,\dots,r\}, \\
 {\mathbb{Y}}_j(a,A) &:= (0,B_j^m), 
 	\quad \quad \quad  j\in\{1,\dots,N\} . 
\end{aligned}
\end{equation}
To compute the Lie brackets of the vector fields at \eqref{jmhvects}, we first consider 
vector fields of the form ${\mathbb{X}}_v(a,A)=(\tilde v(a),v\rcontr A)$, for $v\in V_1$.
From Lemma~\ref{lem05291925}, it follows that
\[
[{\mathbb{X}}_v,{\mathbb{X}}_w] (a,A) = \left(
	[v,w]^{\widetilde{~}} (a) , w\rcontr(v\rcontr A) - v\rcontr(w\rcontr A)
	\right) 
	= \left(
	[v,w]^{\widetilde{~}} (a) , [v,w]\rcontr A
	\right) ,
\]
which gives the Lie brackets $[{\mathbb{X}}_j,{\mathbb{X}}_k]$.
The remaining brackets are
\[
[{\mathbb{X}}_j,{\mathbb{Y}}_k] = (0,v_j\rcontr B^m_k) 
\quad\text{and}\quad
[{\mathbb{Y}}_j,{\mathbb{Y}}_k] = 0 .
\]

If $\pi_m:\Jet^{m+1}(G;W)\to \Jet^m(G;W)$ is the projection along $\HorDer^{m+1}(\frk g;W)$, then  $\Span \, d\pi_m(\scr H^{m+1}) = \scr H^m$, by Lemma~\ref{lem5ead3862} (cfr.~section~\ref{sec_proj}).
Moreover, if $\pi_G:\Jet^m(G;W)\to G$ is the projection onto the first factor, then  $d\pi_G(\scr H^m)=\scr H_G$.

\subsection{Characterization of Jets of functions}
Recall that $\Jet^m(G;W)\to G$ is a vector bundle. 
For a smooth function $f:G\to W$, the \emph{$m$-jet of $f$} is the section of $\Jet^m(G;W)$ defined by 
\[
\Jet^m f(p) = \big( p, A_{f,p}^{\le m} \big) .
\]

\begin{proposition}\label{prop5eafeb43}
	Let $\gamma:G\to\Jet^m(G;W)$ be a smooth section.
	The following statements are equivalent:
	\begin{enumerate}[label=(\roman*)]
	\item\label{item5ead4605}
	There exists $f:G\to W$ smooth such that $\gamma=\Jet^mf$;
	\item\label{item5ead4606}
	$\gamma^*\omega^\ell|_{\scr H_G}=0$ for every $0\le\ell\le m-1$;
	\item\label{item5ead4607}
	$d\gamma(\scr H_G)\subset\scr H^m$. 
	\end{enumerate}
\end{proposition}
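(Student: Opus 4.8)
The plan is to prove the equivalences in the cycle $\ref{item5ead4605}\Rightarrow\ref{item5ead4606}\Rightarrow\ref{item5ead4607}\Rightarrow\ref{item5ead4605}$, though the step $\ref{item5ead4606}\Leftrightarrow\ref{item5ead4607}$ is essentially a restatement. Throughout I will work with the section $\gamma(p)=(p,\Gamma(p))$ where $\Gamma:G\to\HorDer^{\le m}(\frk g;W)$, and use that $\scr H_G$ is spanned pointwise by the horizontal left-invariant vector fields $\tilde v$, $v\in V_1$, so that it suffices to test the forms against $d\gamma(\tilde v(p))$ for $v\in V_1$. Note $d\gamma(\tilde v(p)) = (\tilde v(p), d\Gamma_p(\tilde v(p)))$, and since $dL_p^{-1}\tilde v(p)=v$, we have $(dL_p^{-1}\tilde v(p))_1 = v$, so that
\[
\omega^\ell(\gamma(p))\big(d\gamma(\tilde v(p))\big) = \big(d\Gamma_p(\tilde v(p))\big)^\ell - v\rcontr \Gamma(p)^{\ell+1} = \tilde v\big(\Gamma^\ell\big)(p) - v\rcontr\Gamma(p)^{\ell+1},
\]
and similarly $\theta^j(\gamma(p))(d\gamma(\tilde v(p))) = \Pi_j(v) = 0$ for $j\ge2$ automatically, since $v\in V_1$. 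So the $\theta^j$ conditions are vacuous on $d\gamma(\scr H_G)$, and $\ref{item5ead4606}\Leftrightarrow\ref{item5ead4607}$ reduces to the observation that $d\gamma(\scr H_G)\subset\scr H^m$ holds if and only if the $\omega^\ell$ vanish on $d\gamma(\scr H_G)$ for $0\le\ell\le m-1$.

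For $\ref{item5ead4605}\Rightarrow\ref{item5ead4606}$, suppose $\gamma=\Jet^m f$, so $\Gamma(p)=A^{\le m}_{f,p}$ and $\Gamma^\ell(p)=A^\ell_{f,p}$. Then for $v\in V_1$ the computation above together with~\eqref{eq05161949} from Lemma~\ref{lem5eabebe4} (equivalently~\eqref{eq6177b5e3}) gives
\[
\tilde v\big(\Gamma^\ell\big)(p) = \left.\frac{\dd}{\dd t}\right|_{t=0} A^\ell_{f,p\exp(tv)} = v\rcontr A^{\ell+1}_{f,p} = v\rcontr\Gamma(p)^{\ell+1},
\]
so $\omega^\ell(\gamma(p))(d\gamma(\tilde v(p)))=0$ for every $\ell<m$ and every $v\in V_1$, hence $\gamma^*\omega^\ell|_{\scr H_G}=0$.

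The main content is $\ref{item5ead4607}\Rightarrow\ref{item5ead4605}$ (via $\ref{item5ead4606}$): one must reconstruct $f$ and show that the jet section built from it is $\gamma$. I would set $f:=\Gamma^0:G\to W$, the $W$-valued component of $\gamma$. The claim is that then $A^{\le m}_{f,p}=\Gamma(p)$ for all $p$, which I would prove by showing $A^k_{f,p}=\Gamma^k(p)$ by induction on $k$ from $k=0$ up to $k=m$. The base case $k=0$ is the definition of $f$. For the inductive step, assuming $A^{k}_{f,p}=\Gamma^{k}(p)$ for all $p$, I differentiate along horizontal directions: for $v\in V_1$, by~\eqref{eq6177b5e3} (or~\eqref{eq05161949}) applied to $f$,
\[
v\rcontr A^{k+1}_{f,p} = \left.\frac{\dd}{\dd t}\right|_{t=0} A^{k}_{f,p\exp(tv)} = \left.\frac{\dd}{\dd t}\right|_{t=0}\Gamma^{k}(p\exp(tv)) = \tilde v\big(\Gamma^{k}\big)(p),
\]
while the hypothesis $\gamma^*\omega^{k}|_{\scr H_G}=0$ gives exactly $\tilde v(\Gamma^{k})(p) = v\rcontr\Gamma(p)^{k+1}$. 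Hence $v\rcontr A^{k+1}_{f,p} = v\rcontr\Gamma^{k+1}(p)$ for all $v\in V_1$ and all $p$. This only tells us that $A^{k+1}_{f,p}$ and $\Gamma^{k+1}(p)$ agree after every right contraction by a single $v\in V_1$ — which for multilinear maps on $V_1$ means they are literally equal, since a $(k{+}1)$-linear map is determined by all its right contractions $v\rcontr(\cdot)$ as $v$ ranges over $V_1$ (contracting in the last slot fixes the map). So $A^{k+1}_{f,p}=\Gamma^{k+1}(p)$, completing the induction; taking $k=m$ yields $\Jet^m f=\gamma$.

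The step I expect to require the most care is the inductive reconstruction: one must be careful that the right-contraction identity $v\rcontr A^{k+1}_{f,p}=v\rcontr\Gamma^{k+1}(p)$ for all $v\in V_1$ really does force equality of the two $(k{+}1)$-linear maps on $V_1$ (it does, because $v\rcontr$ in the last slot, over all $v$, recovers the full multilinear map), and that the differentiation step is legitimate — i.e. that $\Gamma$ is smooth, which holds since $\gamma$ is a smooth section, so $t\mapsto\Gamma^{k}(p\exp(tv))$ is a smooth curve in the finite-dimensional vector space $\HorDer^{k}(\frk g;W)$ and its derivative at $0$ is $\tilde v(\Gamma^k)(p)$ by the chain rule. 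Everything else is bookkeeping with the definitions of $\omega^\ell$, $\scr H^m$, $\scr H_G$, and the frame~\eqref{jmhvects}.
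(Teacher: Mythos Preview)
Your proof is correct and follows essentially the same approach as the paper: the equivalence \ref{item5ead4606}$\Leftrightarrow$\ref{item5ead4607} is immediate since the $\theta^j$ conditions are vacuous for sections, \ref{item5ead4605}$\Rightarrow$\ref{item5ead4606} is a direct computation using~\eqref{eq05161949}, and \ref{item5ead4606}$\Rightarrow$\ref{item5ead4605} proceeds by setting $f=\Gamma^0$ and proving $\Gamma^k=A_f^k$ by induction on $k$. The paper writes out the inductive step with explicit entries $v_1,\dots,v_k,v$ rather than invoking the fact that right contractions determine a multilinear map, but the argument is the same.
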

\begin{proof}
	To begin, note that since $\gamma$ is assumed to be a section of $\Jet^m(G;W)$, then \ref{item5ead4606} and \ref{item5ead4607} are equivalent.
	So, we only need to prove \ref{item5ead4605}$\IFF$\ref{item5ead4606}.
	
	\ref{item5ead4605}$\THEN$\ref{item5ead4606}.
	Assume $\gamma=\Jet^mf$ and let $p\in G$ and $v\in V_1$.
	Computing the differential $d\gamma(\tilde v(p))$ we get:
	\begin{align*}
	d\gamma(\tilde v(p)) 
	&= \left.\frac{\dd}{\dd t}\right|_{t=0} \gamma(p\exp(tv)) \\
	&= \left.\frac{\dd}{\dd t}\right|_{t=0} \left(p\exp(tv),A_{f,p\exp(tv)}^{\le m} \right) \\
	&= \left(\tilde v(p) , v\rcontr A_{f,p}^{\le m+1} \right),
	\end{align*}
	where we applied~\eqref{eq05161949} in the last identity.
	It is clear that $d\gamma(\tilde v(p))$ is a linear combination of the vectors in~\eqref{jmhvects}.
		
	\ref{item5ead4606}$\THEN$\ref{item5ead4605}.
	We write $\gamma(p) = (p,\sum_{j=0}^m\gamma^j(p))$, where $\gamma^j(p)\in\HorDer^j(\frk g;W)$.
	Since $\gamma$ is smooth, the function $f=\gamma^0$ is a smooth function $G\to W$.
	We need to show that $\gamma=\Jet^mf$. 
	To this end we show by induction that the claim $\gamma^k=A_f^k$ is true for $k=0, \dots m$, starting with the fact that for $k=0$, the claim is true by definition, i.e., $\gamma^0=f$.
	
	Let $k<m$ and assume $\gamma^k=A_f^k$: we shall show $\gamma^{k+1}=A_f^{k+1}$. Since \ref{item5ead4606} holds, then for all $p\in G$ and $v\in V_1$, we have
	\[
	0 = \omega^k(\gamma(p))[d\gamma(p)[\tilde v(p)]]
	= \left.\frac{\dd}{\dd t}\right|_{t=0}\gamma^k(p\exp(tv)) - v\rcontr\gamma^{k+1}(p) .
	\]
	Let $p\in G$ and $v_1,\dots,v_k,v\in V_1$.
	Then, using the inductive hypothesis and~\eqref{eq05161949}, we get 
	\begin{align*}
	\gamma^{k+1}(p)(v_1,\dots,v_k,v)
	&= (v\rcontr\gamma^{k+1}(p))(v_1,\dots,v_k) \\
	&= \left(\left.\frac{\dd}{\dd t}\right|_{t=0}\gamma^k(p\exp(tv))\right) (v_1,\dots,v_k) \\
	&= \left(\left.\frac{\dd}{\dd t}\right|_{t=0}A_{f,p\exp(tv)}^k\right)(v_1,\dots,v_k) \\
	&= \left( v\rcontr A_{f,p}^{k+1}\right) (v_1,\dots,v_k) \\
	&=  A_{f,p}^{k+1}(v_1,\dots,v_k,v) .
	\end{align*}
	We conclude that $\gamma^{k+1}=A_f^{k+1}$ and the proof is complete.
\end{proof}

\section{Jets via Taylor Expansions}\label{sec616fbbc5}
In this section we build a link between our notion of horizontal jet of a function and the notion of weighted Taylor expansion developed in \cite{MR657581}.
We will gain both a second perspective on horizontal jets and a few existence results that are needed to develop a theory of horizontal jets.

Throughout the section, $G$ is a stratified Lie group and $W$ a finite-dimensional real vector space.

\subsection{Homogeneity on stratified Lie groups}
In this subsection we introduce polynomials and homogeneous differential operators. We provide a sketch of some well known results and concepts with the particular goal of proving the second assertion in Proposition~\ref{prop617a5357}, which links homogeneous operators to the spaces $\HorDer^m(\frk g;W)$ which were introduced in the previous section.
From the combination of Propositions~\ref{prop606df205} and~\ref{prop617a5357}, we obtain an identification of $\HorDer^m(\frk g;W)$ with the space of polynomials $\Poly^m_p(G;W)$, for each $p\in G$.

Dilations $\delta_\lambda:G\to G$ and left-translations $L_p:G\to G$
define linear operators $\delta_\lambda^*,L_p^*:C^\infty(G;W)\to C^\infty(G;W)$ via pre-composition, that is
\[
\delta_\lambda^*f = f\circ\delta_\lambda
\qquad\text{and}\qquad
L_p^*f = f\circ L_p .
\]
We define \emph{dilations centered at }$p\in G$ by
\begin{equation}\label{eq61dd9179}
\delta_{p,\lambda} = L_p\circ\delta_\lambda\circ L_p^{-1} .
\end{equation}
A function $f\in C^\infty(G;W)$ is a \emph{homogeneous polynomial of weighted degree $m\in\N$ at $p\in G$}  if 
$\delta_{p,\lambda}^*f=\lambda^m f$ for all $\lambda>0$.
We denote the vector space of all homogeneous polynomials of weighted degree $m\in\N$ at $p\in G$ with values in $W$
by $\Poly^m_p(G;W)$.
One can easily check that for every $p\in G$ and $m\in\N$, we have
\begin{equation}\label{eq61715bca}
L_g^*(\Poly_p^m(G;W)) = \Poly_{g^{-1}p}^m(G;W) .
\end{equation}

One can show that $\Poly_{p}^m(G;W)\subset \bigoplus_{k=0}^m \Poly_{q}^k(G;W)$ for every $p,q\in G$, see~\cite{MR657581} for details.
Therefore, $\Poly^{\le m}(G;W)=\bigoplus_{k=0}^m \Poly_{p}^k(G;W)$ does not depend on $p$.
Elements of $\Poly^{\le m}(G;W)$ are \emph{polynomial of weighted degree $m\in\N$} on $G$. 
Moreover, $\Poly_{p}^k(G;W) = \Poly_{p}^k(G;\R)\otimes W$.

A linear operator $E:C^\infty(G)\to C^\infty(G)$ is \emph{homogeneous of degree $k\in\Z$} if
\[
\delta_\lambda^* \circ E \circ (\delta_\lambda^*)^{-1} = \lambda^k E
\]
for all $\lambda>0$.

One can easily check that if $E_1$, $E_2$ are homogeneous operators of degree $k_1$ and $k_2$, respectively, then the composition $E_1E_2$ is homogeneous of degree $k_1+k_2$.
Moreover, if $f\in\Poly_{e}^m(G)$, then the multiplier operator $m_f(u)=fu$ is homogeneous of degree $m$.
If $v\in V_k$, then the differential operator $\tilde v$ is homogeneous of order $-k$.

Let $\UniEnvAlg(G)$ be the universal enveloping algebra of $G$, that is, the algebra of all left-invariant differential operators on $C^\infty(G)$.
Elements of $\UniEnvAlg(G)$ are linear combinations of operators of the type $\tilde X_m\cdots \tilde X_1$, where each $\tilde X_j$ is a left-invariant vector field on $G$.
For $m\in\N$, we denote by $\UniEnvAlg^m(G)$ the space of elements in $\UniEnvAlg(G)$ that are homogeneous of degree $-m$.
One can show that $ \UniEnvAlg(G)= \bigoplus_{m=1}^\infty\UniEnvAlg^m(G)$.
Moreover, $\UniEnvAlg^m(G)$ is the dual space of $\Poly_{e}^m(G)$, as shown in \cite[Prop.~1.30]{MR657581}.

\begin{proposition}[{\cite[Prop.~1.30]{MR657581}}]\label{prop606df205}
	For every $p\in G$, the pairing 
	\[
	\langle \cdot|\cdot \rangle_p:\UniEnvAlg^m(G)\times\Poly^m_p(G) \to \R,
	\qquad
	\langle D|f \rangle_p := Df(p) 
	\]
	defines a linear isomorphism $\psi_p$ from $\Poly^m_p(G)$ to the dual $\UniEnvAlg^m(G)^*$.
	Moreover, $\psi_p$ extends to an isomorphism from $\Poly^m_p(G;W)$ to $\UniEnvAlg^m(G)^*\otimes W$, for every finite-dimensional vector space $W$.
\end{proposition}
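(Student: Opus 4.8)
The plan is to reduce the statement to the case $p=e$, there reduce it to a finite-dimensional linear-algebra claim, and settle that claim by a triangularity argument in exponential coordinates; this is essentially the computation of \cite[Prop.~1.30]{MR657581}, so I will only indicate the steps.

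\emph{Reduction to $p=e$ and to scalars.} Since every $D\in\UniEnvAlg(G)$ is left invariant, $D(f\circ L_p)=(Df)\circ L_p$, hence for all $f$ one has $\langle D\,|\,L_p^*f\rangle_e = D(f\circ L_p)(e) = (Df)(p) = \langle D\,|\,f\rangle_p$. By \eqref{eq61715bca} the operator $L_p^*$ restricts to a linear isomorphism $\Poly^m_p(G)\to\Poly^m_e(G)$, so this identity reads $\psi_p=\psi_e\circ L_p^*$ on $\Poly^m_p(G)$, and it suffices to treat $p=e$. For the $W$-valued statement I would use $\Poly^m_e(G;W)=\Poly^m_e(G)\otimes W$ together with $\UniEnvAlg^m(G)^*\otimes W\simeq\{\text{linear maps }\UniEnvAlg^m(G)\to W\}$: tensoring $\psi_e$ with $\mathrm{Id}_W$ produces the desired isomorphism, and the pairing becomes $D\mapsto Df(e)$.

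\emph{Matching dimensions.} Fix a basis $X_1,\dots,X_n$ of $\frk g$ adapted to the stratification, $X_i\in V_{d_i}$ with $d_1\le\dots\le d_n$, and use the exponential coordinates $x=(x_1,\dots,x_n)\mapsto\exp\!\big(\sum_i x_iX_i\big)$. For a multi-index $I$ write $d(I)=\sum_i d_iI_i$. From \cite{MR657581} I would recall two facts: first, $\delta_\lambda^*x_i=\lambda^{d_i}x_i$, so $\{x^I:d(I)=m\}$ is a basis of $\Poly^m_e(G)$; second, since $\tilde X_i$ is homogeneous of degree $-d_i$, the Poincar\'e--Birkhoff--Witt basis $\tilde X^I:=\tilde X_n^{I_n}\cdots\tilde X_1^{I_1}$ of $\UniEnvAlg(G)$ splits by homogeneity so that $\{\tilde X^I:d(I)=m\}$ is a basis of $\UniEnvAlg^m(G)$. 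In particular $\dim\Poly^m_e(G)=\dim\UniEnvAlg^m(G)<\infty$, so it is enough to prove $\psi_e$ injective, i.e.\ that the square matrix $M=\big(\tilde X^I(x^J)(e)\big)_{d(I)=d(J)=m}$ is invertible.

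\emph{Triangularity, and the expected obstacle.} In exponential coordinates $\tilde X_i=\partial_i+\sum_j q_{ij}(x)\partial_j$, where each $q_{ij}$ is a polynomial with $q_{ij}(0)=0$ which, by homogeneity of $\tilde X_i$, is weighted-homogeneous of degree $d_j-d_i$; hence $q_{ij}\neq0$ forces $d_j>d_i$. Expanding $\tilde X^I$ as a product of these two-term operators and applying it to $x^J$, the term in which every factor contributes its $\partial_i$-part is $\partial^Ix^J$, with value $I!\,\delta_{IJ}$ at $e$; in every other term some factor contributes a summand $q_{ij}\partial_j$, which trades a differentiation in the ``low'' variable $x_i$ for one in a strictly higher-weight variable $x_j$ while inserting a polynomial of positive weighted degree that must be differentiated away before evaluation at $e$. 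Keeping track of which coordinate exponents are lowered should show that such a term survives at $e$ only when $J$ is strictly larger than $I$ in the reverse-lexicographic order on weight-$m$ multi-indices (read from $x_n$ down to $x_1$); then $M$ is triangular with nonzero diagonal $I!$, hence invertible, and $\psi_e$ is injective between equal-dimensional spaces, so an isomorphism. The hard part is exactly this bookkeeping — pinning down the precise partial order on multi-indices for which $M$ is triangular and verifying the off-diagonal vanishing from the structure of $\tilde X_i$ — which is the content carried out in \cite[Prop.~1.30]{MR657581} and which one may simply invoke.
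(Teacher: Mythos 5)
The paper offers no proof of this proposition at all: it is imported verbatim from Folland--Stein \cite[Prop.~1.30]{MR657581}, so there is no in-paper argument to compare against, and your proposal should be judged on its own merits. It holds up. The reduction $\psi_p=\psi_e\circ L_p^*$ via left-invariance of $D$ together with \eqref{eq61715bca} is exactly the step the paper leaves implicit (Folland--Stein only treat the identity), the $W$-valued extension by tensoring is routine, and the dimension count via $\{x^I:d(I)=m\}$ and the Poincar\'e--Birkhoff--Witt basis matches what the paper itself records in Section~\ref{sec61715b90}. The one soft spot is the combinatorial order you guess for the triangularity: the standard (and simplest) choice is not reverse-lexicographic but the isotropic length $|I|=\sum_i I_i$. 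Writing $\tilde X_i=\partial_i+\sum_j q_{ij}(x)\partial_j$ with $q_{ij}(0)=0$, every monomial of $q_{ij}$ has isotropic degree at least $1$, so each non-principal factor $q_{ij}\partial_j$ does not decrease isotropic degree, while each $\partial_i$ decreases it by exactly $1$; hence a term of $\tilde X^I$ containing $k\ge1$ non-principal factors can produce a nonzero constant from $x^J$ only if $|J|\le|I|-k<|I|$. Consequently $\tilde X^I x^J(e)=I!\,\delta_{IJ}$ whenever $|J|\ge|I|$, and the matrix $M$ is block-triangular with respect to $|\cdot|$ with invertible diagonal blocks. Whether your reverse-lexicographic order also works is unclear, but since you explicitly invoke \cite{MR657581} for precisely this bookkeeping, this is an imprecision rather than a gap.
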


\begin{corollary}\label{cor606df345}
	If $f\in C^\infty(G;W)$, $p\in G$ and $m\in\N$, then there exists a unique $P^m_{f,p}\in\Poly^m_p(G;W)$ such that
	\[
	Df(p) = DP^m_{f,p}(p)
	\]
	for all $D\in\UniEnvAlg^m(G)$.
\end{corollary}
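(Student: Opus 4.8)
The plan is to derive this as an immediate consequence of Proposition~\ref{prop606df205}. The point is that the pairing $\langle D | f \rangle_p = Df(p)$ sets up a duality between $\UniEnvAlg^m(G)$ and $\Poly^m_p(G;W)$, and the data ``$Df(p)$ for all $D \in \UniEnvAlg^m(G)$'' is exactly an element of the dual space $\UniEnvAlg^m(G)^* \otimes W$. So I would argue as follows. For a fixed smooth $f:G\to W$, the assignment
\[
D \longmapsto Df(p)
\]
is a linear map $\UniEnvAlg^m(G) \to W$, hence an element $\Lambda_{f,p}$ of $\UniEnvAlg^m(G)^* \otimes W$ (using that $W$ is finite-dimensional, so $\mathrm{Lin}(\UniEnvAlg^m(G);W) = \UniEnvAlg^m(G)^* \otimes W$). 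By the second assertion of Proposition~\ref{prop606df205}, the extended map $\psi_p : \Poly^m_p(G;W) \to \UniEnvAlg^m(G)^* \otimes W$ is a linear isomorphism, so there is a unique $P^m_{f,p} \in \Poly^m_p(G;W)$ with $\psi_p(P^m_{f,p}) = \Lambda_{f,p}$. Unwinding the definition of $\psi_p$, this says precisely that $DP^m_{f,p}(p) = \langle D | P^m_{f,p} \rangle_p = \Lambda_{f,p}(D) = Df(p)$ for all $D \in \UniEnvAlg^m(G)$, which is the desired identity; and uniqueness of $P^m_{f,p}$ is exactly the injectivity of $\psi_p$.

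There is essentially nothing hard here: the only things to be careful about are that $\Lambda_{f,p}$ is genuinely well-defined (it is, since each $D \in \UniEnvAlg^m(G)$ acts on the fixed smooth function $f$ and one evaluates at the fixed point $p$, and $D \mapsto Df(p)$ is linear by linearity of $D$ in $\UniEnvAlg(G)$ and linearity of evaluation) and that the tensor-product identification is the right one (it is, because $W$ is finite-dimensional). If I wanted to avoid invoking the tensor form directly, I could instead pick a basis $w_1, \dots, w_d$ of $W$, write $f = \sum_j f_j w_j$ with $f_j \in C^\infty(G)$, apply the scalar case of Proposition~\ref{prop606df205} to get $P^m_{f_j, p} \in \Poly^m_p(G)$ with $DP^m_{f_j,p}(p) = Df_j(p)$ for all $D$, and set $P^m_{f,p} = \sum_j P^m_{f_j,p} \, w_j$; uniqueness then follows componentwise. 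Either route is routine, so the corollary is really just a restatement of the proposition convenient for the applications to $\HorDer^m(\frk g;W)$ that follow.
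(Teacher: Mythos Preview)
Your proof is correct and matches the paper's own argument essentially verbatim: the paper simply notes that $D\mapsto Df(p)$ defines an element of $\UniEnvAlg^m(G)^*$ (or $\UniEnvAlg^m(G)^*\otimes W$ in the vector-valued case) and invokes Proposition~\ref{prop606df205}. Your write-up is more detailed than the paper's one-line justification, but the approach is identical.
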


Corollary~\ref{cor606df345} follows directly from Proposition~\ref{prop606df205}, because $D\mapsto Df(p)$ defines an element of $(\UniEnvAlg^m(G))^*$.
The polynomial $\sum_{k=0}^m P^k_{f,p} \in \Poly ^{\le m}(G;W)$ is the \emph{homogeneous Taylor expansion of $f$ at $p$ of order $m$}.

\begin{proposition}\label{prop617a5357}
	For every $m\in\N$, the function
	\begin{equation*}
	\tau:\Tensor^m(V_1)\to \UniEnvAlg^m(G),
	\qquad
	\tau(v_1\otimes\dots\otimes v_m) = \tilde v_m\cdots \tilde v_1 ,
	\end{equation*}
	is surjective and its transpose maps $\UniEnvAlg^m(G)^*$ onto $\HorDer^m(\frk g;\R)\subset \Tensor^m(V_1^*)$,
	that is,
	\[
	\tau^*:\UniEnvAlg^m(G)^* \to \HorDer^m(\frk g;\R) 
	\]
	is a linear isomorphism.
	Moreover, $\tau^*$ extends to a linear isomorphism from $\UniEnvAlg^m(G)^*\otimes W$ to $\HorDer(\frk g;W)\subset\Tensor^m(V_1^*)\otimes W$.
\end{proposition}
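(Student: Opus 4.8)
The plan is to prove surjectivity of $\tau$ first, then identify the image of $\tau^*$ with $\HorDer^m(\frk g;\R)$, and finally tensor with $W$.

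First I would establish surjectivity of $\tau:\Tensor^m(V_1)\to\UniEnvAlg^m(G)$. Recall from the discussion preceding the proposition that $\UniEnvAlg(G)=\bigoplus_m\UniEnvAlg^m(G)$, where $\UniEnvAlg^m(G)$ consists of the left-invariant operators homogeneous of degree $-m$. A generic element of $\UniEnvAlg(G)$ is a linear combination of operators $\tilde X_\ell\cdots\tilde X_1$ with each $X_j$ in some layer $V_{k_j}$; such an operator is homogeneous of degree $-\sum_j k_j$. Using the PBW theorem together with a basis of $\frk g$ adapted to the stratification, one sees that $\UniEnvAlg^m(G)$ is spanned by ordered monomials in layer-homogeneous basis vectors of total weight $m$. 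Now I would rewrite each such monomial in terms of degree-one vectors only: since $V_1$ bracket-generates $\frk g$, every basis vector in $V_k$ is a linear combination of iterated brackets of elements of $V_1$, and each bracket $[\tilde v,\tilde w]=\tilde v\tilde w-\tilde w\tilde v$ can be expanded as a difference of products of the corresponding first-order operators; iterating this expresses any weight-$m$ monomial as a linear combination of products $\tilde v_m\cdots\tilde v_1$ with all $v_i\in V_1$, i.e.\ as an element of the image of $\tau$. This proves $\tau$ is onto.

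Next I would analyze $\tau^*$. Since $\tau$ is surjective, its transpose $\tau^*:\UniEnvAlg^m(G)^*\to\Tensor^m(V_1)^*=\Tensor^m(V_1^*)$ is injective, so it remains only to identify the image. The key link is the identity $\langle\tau(v_1\otimes\cdots\otimes v_m)\,|\,f\rangle = (\tilde v_m\cdots\tilde v_1 f)(e)$, valid for any $f\in C^\infty(G;\R)$; unwinding the definition of transpose, for $\xi\in\UniEnvAlg^m(G)^*$ the tensor $\tau^*\xi$, viewed as a $k$-multilinear form on $V_1$, sends $(v_1,\dots,v_m)$ to $\xi(\tilde v_m\cdots\tilde v_1)$. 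Specializing $\xi$ to functionals of the form $D\mapsto Df(e)$ — which exhaust $\UniEnvAlg^m(G)^*$ by Proposition~\ref{prop606df205} — we get precisely that $\tau^*\xi$ is the multilinear map $(v_1,\dots,v_m)\mapsto\tilde v_m\cdots\tilde v_1 f(e)=A^m_{f,e}(v_1,\dots,v_m)$. By the very definition of $\HorDer^m(\frk g;\R)$ as the span of all such $A^m_{f,e}$, this shows the image of $\tau^*$ is exactly $\HorDer^m(\frk g;\R)$, and being an injective linear map onto this subspace, $\tau^*$ is the claimed isomorphism. (As a sanity check on dimensions, this forces $\dim\UniEnvAlg^m(G)=\dim\HorDer^m(\frk g;\R)$, consistent with Proposition~\ref{prop606df205} which says $\Poly^m_e(G)\cong\UniEnvAlg^m(G)^*$.)

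Finally, the extension to $W$-valued data is routine: $\UniEnvAlg^m(G)^*\otimes W$ and $\HorDer^m(\frk g;\R)\otimes W$ sit inside $\Tensor^m(V_1^*)\otimes W$, and one checks $\HorDer^m(\frk g;W)=\HorDer^m(\frk g;\R)\otimes W$ — the inclusion $\supseteq$ holds because coordinate components of a $W$-valued function are scalar functions, and $\subseteq$ holds because $A^m_{f,e}=\sum_i A^m_{f_i,e}\otimes e_i$ when $f=\sum_i f_i e_i$ in a basis of $W$. Then $\tau^*\otimes\Id_W$ is the desired isomorphism. I expect the main obstacle to be the surjectivity step: turning the informal claim ``every left-invariant homogeneous operator is a combination of products of horizontal ones'' into a clean argument requires being careful with the PBW ordering and the bookkeeping of homogeneity weights when expanding brackets, though no deep idea is needed beyond the fact that $V_1$ Lie-generates $\frk g$.
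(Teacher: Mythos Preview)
Your proposal is correct and follows essentially the same route as the paper. The paper dismisses the surjectivity of $\tau$ as ``standard'' without argument, whereas you supply the natural sketch via PBW and bracket-generation; for the image of $\tau^*$ both you and the paper compute $\tau^*\xi = A^m_{f,e}$ for $\xi$ of the form $D\mapsto Df(e)$ and invoke Proposition~\ref{prop606df205}, though the paper separates the two inclusions $\tau^*(\UniEnvAlg^m(G)^*)\subset\HorDer^m$ and $\supset$ a bit more explicitly than you do.
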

\begin{proof}
	The fact that $\tau$ is surjective is standard and we don't prove it here.

	Since $\tau:\Tensor^m(V_1)\to \UniEnvAlg^m(G)$ is a surjective linear map,
its transpose $\tau^*$ is a linear embedding of $\UniEnvAlg^m(G)^*$ into $\Tensor^m(V_1^*)$.
	The proof that $\tau^*$ maps $\UniEnvAlg^m(G)^*$ onto $\HorDer^m(\frk g;\R)$ is divided into two parts.
	
	First, we show that $\tau^*(\UniEnvAlg^m(G)^*) \subset \HorDer(\frk g;\R)$.
	Let $\alpha\in\UniEnvAlg^m(G)^*$.
	By Proposition~\ref{prop606df205}, there is $f\in\Poly^m_{e}(G)$ such that $\alpha(D) = Df(e)$ for all $D\in\UniEnvAlg^m(G)$.
	Then 
	\begin{multline*}
	\tau^*(\alpha)(v_1\otimes \dots\otimes v_m) 
	= \alpha(\tau(v_1\otimes \dots\otimes v_m)) \\
	= \tilde v_m\cdots \tilde v_1 f(e)
	= A^m_{f,e}(v_1\otimes \dots\otimes v_m) .
	\end{multline*}
	Second, we show that $\tau^*(\UniEnvAlg^m(G)^*) \supset \HorDer^m(\frk g;\R)$.
	If $A\in \HorDer^m(\frk g;\R)$, then there is $f\in C^\infty(G)$ such that $A=A^m_{f,e}$.
	Then, the computation above shows that $\tau^*(\alpha)=A$, where $\alpha=\psi_p(P^m_{f,e})$. 
\end{proof}

\subsection{The polynomial jet space}

For $P\in \Poly^{\le m}(G;W)$ and $v\in\frk g$, define
\[
v\rcontr P = v^\dagger P  \in \Poly^{\le m}(G;W),
\]
where $v^\dagger$ denotes the right-invariant vector field on $G$ with $v^\dagger(e_G)=v$.

\begin{remark}
	One can easily show that,
	if $P\in\Poly^m_{e}(G;W)$ and $v\in V_k$, then $v\rcontr P$ is an element of $\Poly^{m-k}_{e}(G;W)$.
	Notice that, in the latter claim, $e$ cannot be substituted with any point $p\in G$.
	The reason is that, for $p\neq e$, the spaces $\Poly^m_p(G;W)$ depend on the choice of left over right translations in the definition of $\delta_{p,\lambda}$, see~\eqref{eq61dd9179}.
\end{remark}

As a direct consequence of the standard equality $[v^\dagger,w^\dagger] = - [v,w]^\dagger$ for $v,w$ in a Lie algebra $\frk g$, it follows that the map $v\mapsto v\rcontr\in\End(\Poly^{\le m}(G;W))$ is a Lie algebra anti-morphism of $\frk g$, i.e.,
\begin{equation}
 	v\rcontr w\rcontr - w\rcontr v\rcontr = -[v,w]\rcontr ,
\end{equation}
for every $v,w\in\frk g$.

Therefore, we can apply the construction presented in Section~\ref{app05301812} with $\psi(v) = v\rcontr$.
Define $\jet^m_\Poly(\frk g;W)$ as $\frk g\ltimes_\psi \Poly^{\le m}(G;W)$ with Lie brackets
\begin{equation}\label{eq61781ab0}
[ (v,P) , (w,Q) ] = ([v,w], w\rcontr P - v\rcontr Q) 
= ([v,w],w^\dagger P - v^\dagger Q) .
\end{equation}
In parallel to Lemma~\ref{lem6177b00e}, the Lie algebra $\jet^m_\Poly(\frk g;W)$ is stratified with layers
\[
\jet^m_\Poly(\frk g;W)_k = V_k\oplus \Poly^{m+1-k}_{e}(G;W) ,
\]
where $V_k=\{0\}$ if $k>s$ and $\Poly^{m+1-k}_{e}(G;W)=\{0\}$ if $k> m+1$.
Define $\Jet^m_\Poly(G;W) = G\ltimes \Poly^{\le m}(G;W)$
as the Lie group with operation
\[
(\exp(v),P)(\exp(w),Q) 
= ( \exp(v)\exp(w) , Q + e^{v^\dagger} P ) .
\]

\begin{theorem}\label{thm6171c9a8}
	\begin{enumerate}[leftmargin=*,label=(\roman*)]
	\item
	For every $p\in G$,
	the map $\sigma_p:\Poly^m_p(G;W) \to \HorDer^m(\frk g;W)$, $\sigma_p(f) = A^m_{f,p}$ is a linear isomorphism.\footnote{Notice that the space $\HorDer^{\le m}(\frk g;\R)$ does not depend on $p$, but the way it is identified with $\Poly^{\le m}_p(G)$ does.}
	\item
	If $p\in G$, $f\in\Poly^{\le m}(G;W)$ and $v\in\frk g$, then
	\begin{equation}\label{eq6177d5f6}
		\sigma_p(v\rcontr f) = \Ad_p^{-1}(v)\rcontr \sigma_p(f) \in \HorDer^{\le m}(G;W) .
	\end{equation}
	\item
	The map $\sigma:\jet^m_\Poly(\frk g;W)\to \jet^m(\frk g;W)$, $\sigma(v,f)=(v,A^{\le m}_{f,e})$ is an isomorphism of stratified Lie algebras.
	\item
	The map $\Phi:\Jet^m_\Poly(\frk g;W)\to \Jet^m(\frk g;W)$, $\Phi(p,f) = (p, A^{\le m}_{f,e})$ is the isomorphism of Lie groups with $\Phi_*=\sigma$.
	\end{enumerate}
\end{theorem}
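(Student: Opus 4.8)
The plan is to prove the four assertions in order, using that each builds on the previous one, and reduce everything to the polynomial/derivative dictionary already established in Propositions~\ref{prop606df205} and~\ref{prop617a5357}.

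First I would prove (i). The composition
\[
\Poly^m_p(G;W) \xrightarrow{\ \psi_p\ } \UniEnvAlg^m(G)^*\otimes W \xrightarrow{\ \tau^*\ } \HorDer^m(\frk g;W)
\]
is a linear isomorphism, being a composite of the isomorphism $\psi_p$ of Proposition~\ref{prop606df205} and the isomorphism $\tau^*$ of Proposition~\ref{prop617a5357}. So it suffices to check that this composite equals $\sigma_p$. Unwinding the definitions: $\psi_p(f)$ is the functional $D\mapsto Df(p)$, and applying $\tau^*$ precomposes with $\tau$, so $\tau^*(\psi_p(f))(v_1\otimes\cdots\otimes v_m) = \tau(v_1\otimes\cdots\otimes v_m)f(p) = \tilde v_m\cdots\tilde v_1 f(p) = A^m_{f,p}(v_1,\dots,v_m)$. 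Hence $\tau^*\circ\psi_p = \sigma_p$, which is therefore an isomorphism. (One should also note $\sigma_p$ makes sense with values in $\HorDer^m(\frk g;W)$ since every element of $\HorDer^m(\frk g;W)$ is $A^m_{g,e}$ for some $g$, and by~\eqref{eq61715bca} a polynomial at $p$ is a polynomial at $e$ after a left translation — but really the display above does all the work.)

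Next, (ii) is the heart of the matter and I expect it to be the main obstacle, since it is where the twist by $\Ad_p^{-1}$ enters and where one must be careful about the difference between the two right-contraction operations (the one on polynomials, $v\rcontr f = v^\dagger f$, and the one on $\HorDer$). The strategy is to reduce to Remark~\ref{rem617167f4}, specifically formula~\eqref{eq617721ba}: $A^k_{x^\dagger f,p} = (\Ad_p^{-1}(x)\rcontr A^{\le k+s}_{f,p})^k$. Given $f\in\Poly^{\le m}(G;W)$ and $v\in\frk g$, the element $v\rcontr f = v^\dagger f$ is again a polynomial of weighted degree $\le m$ (indeed the degree drops, but $\le m$ suffices). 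Applying $\sigma_p$ componentwise, $\sigma_p(v\rcontr f)^k = A^k_{v^\dagger f,p}$, and then~\eqref{eq617721ba} gives $A^k_{v^\dagger f,p} = (\Ad_p^{-1}(v)\rcontr A^{\le k+s}_{f,p})^k = (\Ad_p^{-1}(v)\rcontr \sigma_p(f))^k$, which is exactly~\eqref{eq6177d5f6}. The subtle point to verify is that the truncation levels match: in~\eqref{eq617721ba} the right-contraction is applied to $A^{\le k+s}_{f,p}$, whereas $\sigma_p(f)\in\HorDer^{\le m}(\frk g;W)$; but since $f$ is a polynomial of weighted degree $\le m$, the derivatives $A^j_{f,p}$ vanish for $j>m$ anyway, so truncating at $m$ versus $k+s$ yields the same value in degree $k$ provided $k+s\ge m$ or the extra components are zero — and one should simply remark that for polynomials the Taylor expansion is exact, so $A^{\le m}_{f,p}$ already determines all higher derivatives (which are zero). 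That makes the bookkeeping go through.

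Finally, (iii) and (iv) are formal consequences. For (iii): $\sigma = \bigoplus_p$-free, namely $\sigma(v,f) = (v, \sigma_e(f))$ where on the $\HorDer$ side we use $\sigma_e$; by (i) with $p=e$ this is a linear isomorphism $\frk g\oplus\Poly^{\le m}(G;W)\to\frk g\oplus\HorDer^{\le m}(\frk g;W)$. To see it is a Lie algebra morphism, compare the brackets~\eqref{eq61781ab0} and~\eqref{eq5ebb173d}: one needs $\sigma_e(v\rcontr f) = v\rcontr\sigma_e(f)$ for $v\in\frk g$, which is exactly (ii) at $p=e$ since $\Ad_e = \Id$. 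Stratification is preserved because $\sigma_e$ maps $\Poly^{m+1-k}_e(G;W)$ isomorphically onto $\HorDer^{m+1-k}(\frk g;W)$ by (i), and the layer descriptions of $\jet^m_\Poly$ and $\jet^m$ match term by term. For (iv): since $\Jet^m_\Poly(\frk g;W)$ and $\Jet^m(\frk g;W)$ are the simply connected Lie groups integrating $\jet^m_\Poly$ and $\jet^m$, the Lie algebra isomorphism $\sigma$ integrates to a unique Lie group isomorphism; it remains only to check this isomorphism is the map $\Phi(p,f)=(p,A^{\le m}_{f,e})$. One can verify this directly by checking $\Phi$ is a homomorphism on the group law $(\exp(v),P)(\exp(w),Q) = (\exp(v)\exp(w), Q+e^{v^\dagger}P)$: applying $\Phi$ and using that $\sigma_e$ intertwines $v^\dagger = v\rcontr$ on polynomials with $v\rcontr$ on $\HorDer$ (again (ii) at $e$), one gets $e^{v^\dagger}$ transported to $e^{(v\rcontr)}$, matching the group law of $\Jet^m(\frk g;W)$; and $d\Phi|_{(e,0)} = \sigma$ by construction, so $\Phi_* = \sigma$.
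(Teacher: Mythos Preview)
Your argument is correct and matches the paper's proof for (i)--(iii): the identification $\sigma_p=\tau^*\circ\psi_p$, the reduction of~\eqref{eq6177d5f6} to Remark~\ref{rem617167f4} (your bookkeeping with the truncation levels is in fact more explicit than the paper's one-line citation), and the specialization to $p=e$ for the Lie algebra isomorphism are exactly what the paper does. For (iv) there is only a cosmetic difference: the paper checks $\Phi\circ\exp_{\Jet_\Poly}=\exp_{\Jet}\circ\sigma$ via the explicit exponential formula~\eqref{eq6171635c}, whereas you verify that $\Phi$ respects the group law by transporting $e^{w^\dagger}$ to $e^{w\rcontr}$ through the intertwining $\sigma_e\circ(w\rcontr)=(w\rcontr)\circ\sigma_e$; both computations rest on (ii) at $p=e$ and are equivalent.
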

\begin{proof}
	The composition of the isomorphisms $\psi_p$ form Proposition~\ref{prop606df205}
	and $\tau^*$ from Proposition~\ref{prop617a5357} gives the linear isomorphism
	\begin{equation*}
	\sigma_p:
	\Poly ^{m}_p(G;W) \overset{\psi_p}\longrightarrow 
	\UniEnvAlg^m(G;W)^* \overset{\tau^*}\longrightarrow
	\HorDer^m(\frk g;W),
	\qquad \sigma_p(f) = A^m_{f,p},
	\end{equation*}
	for each $p\in G$.
	
	The identity~\eqref{eq6177d5f6} follows from Remark~\ref{rem617167f4}.
	Notice in particular that $\sigma_{e}(x\rcontr f) = x\rcontr \sigma_{e}(f)$.
	Thus, $\sigma$ is an isomorphism of stratified Lie algebras.
	Finally, the last statement is proved by checking that
	\[
	\Phi(\exp_{\Jet_\Poly}(x,P)) = \exp_{\Jet}(\sigma(x,P))
	\]
	for every $x\in\frk g$ and $P\in\Poly^{\le m}(G)$,
	which is a computation with~\eqref{eq6171635c}.
\end{proof}

An immediate consequence of Theorem~\ref{thm6171c9a8} is the following statement.
\begin{corollary}\label{cor5eafe914}
		For every $p\in G$ and $A\in\HorDer^{\le m}(\frk g;W)$ there is $f\in C^\infty(G;W)$ such that $A_{f,p}^{\le m} = A$ and $A_{f,p}^k=0$ for all $k>m$.
		In particular, if $f\in\Poly^m_p(G;W)$, then $A^k_{f,p}=0$ for $k\neq m$.
\end{corollary}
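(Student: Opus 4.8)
The plan is to derive Corollary~\ref{cor5eafe914} directly from the isomorphism $\Phi$ (equivalently $\sigma_p$) furnished by Theorem~\ref{thm6171c9a8}, using the fact that polynomials in $\Poly^{\le m}(G;W)$ are, by construction, objects of weighted degree at most $m$ whose higher horizontal derivatives vanish. First I would fix $p\in G$ and $A\in\HorDer^{\le m}(\frk g;W)$, write $A=\sum_{k=0}^m A^k$ with $A^k\in\HorDer^k(\frk g;W)$, and use part~(i) of Theorem~\ref{thm6171c9a8} to produce, for each $k$, a homogeneous polynomial $P^k\in\Poly^k_p(G;W)$ with $\sigma_p(P^k)=A^k$, i.e.\ $A^k_{P^k,p}=A^k$. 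Set $f=\sum_{k=0}^m P^k\in\Poly^{\le m}(G;W)$.

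The key claim is then that $A^j_{f,p}=A^j$ for all $j\le m$ and $A^j_{f,p}=0$ for all $j>m$. By linearity of $h\mapsto A^j_{h,p}$ it suffices to check that $A^j_{P^k,p}=0$ whenever $j\ne k$. This is exactly the ``in particular'' assertion of the Corollary applied to the homogeneous summands, so the whole statement reduces to: \emph{if $g\in\Poly^m_p(G;W)$ then $A^j_{g,p}=0$ for $j\ne m$}. To see this I would use the homogeneity $\delta_{p,\lambda}^*g=\lambda^m g$ together with the fact, recorded before Proposition~\ref{prop606df205}, that a left-invariant vector field $\tilde v$ with $v\in V_1$ is homogeneous of degree $-1$ for the dilations centered at $p$ (conjugating $\tilde v$ by $L_p$ and using that $\tilde v$ is homogeneous of degree $-1$ for $\delta_\lambda$). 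Hence for $v_1,\dots,v_j\in V_1$ the function $\tilde v_j\cdots\tilde v_1 g$ lies in $\Poly^{m-j}_p(G;W)$ — in particular it vanishes identically if $j>m$ since there are no polynomials of negative weighted degree, and if $j<m$ it is a polynomial vanishing at $p$ because every element of $\Poly^{m-j}_p(G;W)$ with $m-j\ge1$ vanishes at the center $p$ (its value at $p$ equals $\lambda^{m-j}$ times its value at $p$ for all $\lambda>0$). Evaluating at $p$ gives $A^j_{g,p}=0$ for $j\ne m$, and the second sentence of the Corollary follows; combined with $A^m_{P^m,p}=A^m$ and $\sum_k A^k=A$ we get $A^{\le m}_{f,p}=A$ and $A^k_{f,p}=0$ for $k>m$.

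The only subtlety I anticipate is the bookkeeping around which point the polynomial spaces are based at: part~(i) of Theorem~\ref{thm6171c9a8} gives $P^k\in\Poly^k_p(G;W)$ for the \emph{same} $p$ at which we want to prescribe the derivatives, so the homogeneity argument above is carried out with dilations $\delta_{p,\lambda}$ centered exactly at $p$, and no transport between different centers is needed — this is precisely what makes the argument clean and avoids the $\Ad_p$ corrections appearing in~\eqref{eq6177d5f6}. Alternatively, one could phrase the whole thing via $\Phi$ and the exponential formula~\eqref{eq6171635c}, but invoking $\sigma_p$ pointwise together with the degree count is the shortest route. I expect the homogeneity computation showing $\tilde v_j\cdots\tilde v_1 g\in\Poly^{m-j}_p(G;W)$ to be the main (still routine) technical point, everything else being linear algebra.
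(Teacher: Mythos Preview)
Your proposal is correct and is essentially the approach the paper intends: the paper states the corollary as ``an immediate consequence of Theorem~\ref{thm6171c9a8}'' with no further details, and your argument---invert $\sigma_p$ degree-by-degree to produce $f=\sum_k P^k\in\Poly^{\le m}_p(G;W)$, then use homogeneity of $\tilde v$ (which commutes with $L_p^*$ by left-invariance, hence is homogeneous of degree $-1$ also for $\delta_{p,\lambda}$) to see $A^j_{P^k,p}=0$ for $j\neq k$---is the natural way to unpack that claim. The homogeneity computation you flag as the main technical point is indeed routine and goes through exactly as you describe.
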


\subsection{Bases}\label{sec61715b90}
Let $\scr B = (b_1,\dots,b_n)$ be a basis of $\frk g$ adapted to the stratification $\bigoplus_jV_j$, i.e., 
there is a non-decreasing sequence of integers $\{\w_i\}_{i=1}^n$ such that $b_i\in V_{\w_i}$ for all $i$.
If $I\in\N^n$ is a multi-index, define $\w(I)=\sum_{j=1}^n \w_jI_j$.
We denote by $x_j$ the exponential coordinates given by $\scr B$, i.e., smooth functions $G\to\R$ such that $\exp(\sum_jx_j(p)b_j)=p$ for all $p\in G$.
The \emph{homogeneous degree} of a monomial $x^I = \prod_{j=1}^n x_j^{I_j}$ is 
$\deg(x^I) = \w(I)$.

\begin{lemma}
	Using the above notation, $\{x^I\}_{\w(I)=m}$ is a basis of $\Poly^m_{e}(G;\R)$.
\end{lemma}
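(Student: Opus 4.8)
The plan is to compute how the group dilations act on the exponential coordinates and then separate the homogeneous piece by eigenvalues. First I would note that the exponential coordinates transform by $\delta_\lambda^* x_j = \lambda^{\w_j} x_j$: since the Lie algebra dilation scales $V_i$ by $\lambda^i$ and $b_j\in V_{\w_j}$, the group dilation $\delta_\lambda=\exp\circ\delta_\lambda\circ\log$ satisfies $\delta_\lambda(p)=\exp\bigl(\sum_j\lambda^{\w_j}x_j(p)b_j\bigr)$, so $x_j\circ\delta_\lambda=\lambda^{\w_j}x_j$. As $L_{e_G}=\Id$ we have $\delta_{e,\lambda}=\delta_\lambda$, hence $\delta_{e,\lambda}^*(x^I)=\prod_j(\lambda^{\w_j}x_j)^{I_j}=\lambda^{\w(I)}x^I$. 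In particular every monomial $x^I$ with $\w(I)=m$ is a homogeneous polynomial of weighted degree $m$ at $e$, i.e.\ lies in $\Poly^m_e(G;\R)$; and the monomials $\{x^I:\w(I)=m\}$ are linearly independent, being pairwise distinct monomials in $x_1,\dots,x_n$.

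For the reverse inclusion I would invoke the standard structure of weighted polynomials on homogeneous groups from \cite{MR657581}: $\Poly^{\le m}(G;\R)$ is precisely the linear span of the monomials $x^I$ with $\w(I)\le m$. Given $f\in\Poly^m_e(G;\R)\subset\Poly^{\le m}(G;\R)$, write $f=\sum_{\w(I)\le m}c_Ix^I$. Applying $\delta_\lambda^*$ and using $\delta_\lambda^*f=\lambda^mf$ together with $\delta_\lambda^*(x^I)=\lambda^{\w(I)}x^I$ gives the identity $\sum_{\w(I)\le m}c_I(\lambda^m-\lambda^{\w(I)})x^I=0$ for all $\lambda>0$. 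Linear independence of the $x^I$ forces $c_I(\lambda^m-\lambda^{\w(I)})=0$ for every $\lambda$, so $c_I=0$ whenever $\w(I)\ne m$. Hence $f\in\Span\{x^I:\w(I)=m\}$, and combined with the first step this shows that $\{x^I\}_{\w(I)=m}$ is a basis of $\Poly^m_e(G;\R)$.

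The only input beyond elementary computation is the identification of $\Poly^{\le m}(G;\R)$ with the span of monomials of weighted degree at most $m$; I expect this to be the point worth pinning down, and it is part of the basic theory of polynomials on stratified groups in \cite[\S1]{MR657581}, already in use in this section. Everything else is the eigenvalue-separation argument above.
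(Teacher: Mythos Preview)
Your argument is correct once the cited input is granted, and the eigenvalue-separation step is exactly the right mechanism. The difference from the paper is in how the ``finiteness'' of $f$ is obtained. You invoke from \cite{MR657581} that $\Poly^{\le m}(G;\R)$ coincides with the span of the monomials $x^I$ with $\w(I)\le m$; but in the paper's definitions $\Poly^{\le m}(G;\R)=\bigoplus_{k\le m}\Poly^k_e(G;\R)$, so this identification is precisely the union over $k\le m$ of the lemma you are proving. Unless one sets it up as an induction on $m$, or checks carefully that Folland--Stein establishes that a smooth function satisfying $\delta_\lambda^*f=\lambda^m f$ is automatically a polynomial in exponential coordinates, the citation risks being circular.

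The paper avoids this by arguing directly: it takes the ordinary Euclidean Taylor expansion of $f$ at $0$ to order $m$, $f(x)=\sum_{|I|\le m}f_Ix^I+\rho(x)$ with $|\rho(x)|\le C|x|^{m+1}$, regroups the finite sum by weighted degree, and then compares $f(\delta_\lambda x)/\lambda^m$ with $\sum_{\w(I)=m}f_Ix^I$. Letting $\lambda\to0^+$, the terms with $\w(I)<m$ would blow up unless their coefficients vanish, while the terms with $\w(I)>m$ and the remainder $\rho(\delta_\lambda x)/\lambda^m$ tend to $0$ (using $|\delta_\lambda x|\le\lambda|x|$). This yields $f=\sum_{\w(I)=m}f_Ix^I$ without any external input about the structure of $\Poly^{\le m}$. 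Your route is shorter if one is willing to import that structure; the paper's route is self-contained and is what actually justifies the fact you cite.
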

\begin{proof}
	It is clear that $\{x^I\}_{\w(I)=m}$ is a linearly independent subset of $\Poly^m_{e}(G;\R)$.
	To prove the claim, we set $|I|=\sum_{j=1}^n I_j$, so that $|I|\le \w(I) \le s|I|$ because $1\le w_i\le s$ for all $i$.
	Given $f\in\Poly^m_{e}(G;\R)$, we write the Taylor expansion of $f$  at 0 in exponential coordinates up to order $m$, that is,
	$f(x) = \sum_{|I|\le m} f_I x^I + \rho(x)$, 
	where $\rho\in C^\infty(G)$ is such that $|\rho(x)| \le C(|x|^{m+1})$ for $|x|\le 1$ and some $C\ge0$.
	Here $|\cdot|$ denotes the Euclidean norm.
	We rearrange this sum as
	\[
	f(x)
	= \sum_{|I|\le m} f_I x^I + \rho(x)
	= \sum_{k=0}^m \sum_{\w(I)=k} f_I x^I + \sum_{k=m+1}^{sm} \sum_{\substack{\w(I)=k\\|I|\le m}}f_I x^I+ \rho(x) .
	\]
	Therefore, for $x$ fixed and $\lambda>0$, we have
	\[
	\frac{f(\delta_\lambda x)}{\lambda^m} - \sum_{\w(I)=m} f_I x^I
	= \sum_{k=0}^{m-1} \lambda^{k-m} \sum_{\w(I)=k} f_I x^I + \sum_{k=m+1}^{sm} \lambda^{k-m} \sum_{\substack{\w(I)=k\\|I|\le m}}f_I x^I+ \frac{\rho(\delta_\lambda x)}{\lambda^m} .
	\]
	Notice that the left-hand side of the latter identity is constant in $\lambda$, while on the right-hand side we have
	\begin{align*}
	\lim_{\lambda\to 0^+} |\sum_{k=0}^{m-1} \lambda^{k-m} \sum_{\w(I)=k} f_I x^I| &= \infty,
	& \text{ if $\sum_{\w(I)=k} f_I x^I\neq0$ for some $k$,} \\
	\lim_{\lambda\to 0^+} \sum_{k=m+1}^{sm} \lambda^{k-m} \sum_{\substack{\w(I)=k\\|I|\le m}}f_I x^I &= 0 ,\\
	\lim_{\lambda\to 0^+} \frac{\rho(\delta_\lambda x)}{\lambda^m} &= 0 ,
	\end{align*}
	where the last limit is follows from the fact $|\delta_\lambda x|\le \lambda|x|$.
	We thus obtain that $f(x) =\sum_{\w(I)=m} f_I x^I $.
\end{proof}

For a multi-index $I\in\N^n$, define 
$\tilde b^I = \tilde b_1^{I_1}\cdots \tilde b_n^{I_n}\in\UniEnvAlg(G)$.
Notice that we obtain $(\delta_\lambda)_*\tilde b^I = \lambda^{-\w(I)}\tilde b^I$. 
By the Poincaré--Birkhoff--Witt Theorem \cite[I.2.7]{MR0132805},
$\{\tilde b^I\}_{I\in\N^n}$ is a basis of $\UniEnvAlg(G)$.
Since $\tilde b^I\in\UniEnvAlg^m(G)$ if and only if $\w(I)=m$,
then $\{\tilde b^I:\w(I)= m\}$ is a basis of $\UniEnvAlg^m(G)$.

We have two ways to build a basis of $\HorDer^m(\frk g;W)$:
we can take $\{\sigma_{e}(x^I)\}_{\w(I)=m}$,
or the basis dual to $\{\tilde b^I\}_{\w(I)= m}$.
These are not the same, because it is \emph{not} true that $\langle \tilde b^I | x^J \rangle = 0$ if and only if $I=J$.

We choose to describe the second basis, that is, the basis dual to $\{\tilde b^I\}_{\w(I)= m}$, for both $\HorDer^m(\frk g;\R)$ and $\Poly_{e}^m(G;\R)$.
In other words, we will get $\{A_I\}_{\w(I)= m}\subset\HorDer^m(\frk g;\R)$ such that, for every $f\in C^\infty(G)$ and $p\in G$,
\begin{equation}
A^{m}_{f,p}  = \sum_{\w(I) = m} (\tilde b^If)(p)\cdot A_I  .
\end{equation}
And, for each $p\in G$, we will get $\{P_{p,I}\}_{\w(I)= m}\subset\Poly_p^m(G;\R)$ such that
\begin{equation}\label{eq617b9d84}
P^{m}_{f,p} = \sum_{\w(I) = m} (\tilde b^If)(p)\cdot P_{p,I} .
\end{equation}

We can obtain the polynomial $P_{p,I}$ by imposing $ \tilde b^IP^J(p) = \delta^I_J$ and then compute $A_I = A^m_{P_{e,I},e}$.
The identity~\eqref{eq617b9d84} gives us the homogeneous taylor expansion of $f\in C^\infty(G)$ at $p$ as
\[
f \sim \sum_{m\ge 0} \sum_{\w(I) = m} (\tilde b^If)(p)\cdot P_{p,I} .
\]

However, we can compute $A_I$ without taking derivatives and exploiting the algebra structure of $\UniEnvAlg(G)$.
Indeed, compute the functions $\tau_I:\Tensor^m(V_1)\to\R$ given by
\begin{equation}\label{eq617b9c61}
\tau(\xi) = \sum_{\w(I)= m} \tau_I(\xi) \tilde b^I,
\end{equation}
for $\xi\in \Tensor^m(V_1)$.
If $\{P_{p,I}\}_{\w(I)= m}$ is the basis of $\Poly^{m}_p(G)$ dual to $\{\tilde b^I\}_{\w(I)= m}$,
then 
\[
\langle \tau^*P_{p,I} | \xi \rangle
= \langle P_{p,I} | \tau(\xi) \rangle
= \tau_I(\xi) .
\]
Therefore, $A_I$ is given by 
\begin{equation}\label{eq6178f2b6}
\langle A_I|\xi \rangle = \tau_I(\xi) 
\end{equation}
for all $\xi \in \Tensor^m(V_1)$.
Notice that $A_I$ does not depend on $p$, while $P_{p,I}$ does.
Moreover, we can compute $A_I$ without computing $P_{p,I}$.
Clearly, the isomorphism $\sigma_p$ is given by $\sigma_p(P_{p,I}) = A_I$.

\begin{remark}\label{rem6093eb23}
	We now summarize an algorithm to compute
	a basis 
	for $\HorDer^m(\frk g;W)$:
	\begin{enumerate}
		\item
		Choose a basis $\scr B = (b_1,\dots,b_n)$ of $\frk g$ adapted to the stratification $\bigoplus_{j=1}^s V_j$;
		We denote by $\Xi^m$ the basis of $\Tensor^m(V_1)$ induced by $\scr B$.
		\item
		Compute the list of multi-indices $\scr I^m = \{I\in\N^n:\w(I)=m\}$.
		\item
		For each $\xi\in\Xi^m$ write $\tau(\xi)$ in the basis $\{\tilde b^I\}_{I\in\scr I^m}$ as in~\eqref{eq617b9c61}, 
		so that we obtain $\{\tau_I(\xi)\}_{I\in\scr I^m,\xi\in\Xi^m}$.
		\item
		For each $I\in\scr I^m$, compute $A_I = \sum_{\xi\in\Xi^m} \tau_I(\xi) \xi^*$,
		where $\xi^*$ is the element of $\Tensor^m(V_1^*)$ dual to $\xi$ with respect to the basis $\Xi^m$.
		\item
		The resulting $\{A_I\}_{I\in\scr I^m}$ is a basis of $\HorDer(\frk g;\R)$.
		Since $\HorDer^m(\frk g;W)$ is the tensor product $\HorDer^m(\frk g;\R)\otimes W$, a basis for $\HorDer^m(\frk g;W)$ is $\{A_I\otimes c\}_{I\in\scr I^m, c\in \scr C}$, for a basis $\scr C$ of $W$.
	\end{enumerate}
	See section~\ref{sec617bad0c} for an application.
\end{remark}

\section{Prolongation of Contact Maps}\label{sec610264b1}

\subsection{Projection to lower order jet bundles}\label{sec_proj}
In this section, we will work on $\Jet^m(G;W)$ and $\Jet^{m+1}(G;W)$ simultaneously, and so we will accent notation with~$\hat \,$ to indicate objects on $\Jet^{m+1}(G;W)$.
For example, we write $p$ for a point in $\Jet^{m}(G;W)$ and $\hat p$ for a point in $\Jet^{m+1}(G;W)$; we use $\omega^\ell$ for a contact form on $\Jet^m(G;W)$ and  $\hat \omega^\ell$ for a contact form on $\Jet^{m+1}(G;W)$, etc.

We denote by $\pi_m:\Jet^{m+1}(G;W)\to \Jet^{m}(G;W)$ the projection along $\HorDer^{m+1}(\frk g;W)$.
This map is not a morphism of Lie groups, because $\HorDer^{m+1}(\frk g;W)$ is not an ideal of $\jet^{m+1}(\frk g;W)$, but it maps the horizontal distribution onto the horizontal distribution, although not point by point.
In other words, if $\hat p \in \pi_m^{-1}(p)$ then $d\pi_m|_{\hat p} (\scr H^{m+1}_{\hat p})\neq\scr H^{m}_{p}$ (as one can see from~\eqref{eq6177ee74} below), however $\scr H^{m}_{p}$ is the closure of the union of $d\pi_m|_{\hat p}(\scr H^{m+1}_{\hat p})$ for all $\hat p \in \pi_m^{-1}(p)$.
In fact, we don't need to consider the whole fiber $\pi_m^{-1}(p)$, 
but any open subset already spans the horizontal distribution in the image, 
as the next lemma shows. 


\begin{lemma}\label{lem5ebc4d52}
	If $p\in  \Jet^{m}(G;W)$ and $U \subset  \Jet^{m+1}(G;W)$ is open and satisfies $\pi_m^{-1}(p) \cap U \ne \emptyset$, then
	\begin{equation}\label{eq5ebc46a2}
	\scr H^{m}_{ p} = \Span\left( \bigcup 
		\left\{ d\pi_m|_{\hat p}(\scr H^{m+1}_{\hat p}) : \hat p \in \pi_m^{-1}(p) \cap U \right\}
		\right).
	\end{equation}
\end{lemma}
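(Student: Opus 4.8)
The plan is to unwind the two contact structures explicitly and compute the differential $d\pi_m$ on horizontal vectors. Recall from~\eqref{jmhvects} that $\scr H^{m+1}_{\hat p}$, for $\hat p=(a,\hat A)\in\Jet^{m+1}(G;W)$, is spanned by the vectors $\hat{\mathbb X}_j(\hat p)=(\tilde v_j(a),v_j\rcontr\hat A)$ for a basis $v_1,\dots,v_r$ of $V_1$, together with $\hat{\mathbb Y}_j(\hat p)=(0,\hat B_j^{m+1})$ for a basis $\hat B_1^{m+1},\dots$ of $\HorDer^{m+1}(\frk g;W)$. Since $\pi_m$ is the linear projection forgetting the $\HorDer^{m+1}$-component, $d\pi_m$ kills each $\hat{\mathbb Y}_j(\hat p)$ and sends $\hat{\mathbb X}_j(\hat p)$ to $\bigl(\tilde v_j(a),(v_j\rcontr\hat A)^{\le m}\bigr)$, i.e.\ to $\bigl(\tilde v_j(a),\sum_{\ell=0}^{m-1} v_j\rcontr\hat A^{\ell+1}\bigr)$. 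Writing $\hat A = A + \hat A^{m+1}$ with $A=\pi_m(\hat A)\in\HorDer^{\le m}(\frk g;W)$ the fixed component, and noting that $(v_j\rcontr\hat A)^\ell = v_j\rcontr\hat A^{\ell+1}$ agrees for $\ell<m$ with $v_j\rcontr A^{\ell+1}$ except at level $\ell=m-1$, where $v_j\rcontr\hat A^{m+1}\in\HorDer^m(\frk g;W)$ is the only term that depends on $\hat A^{m+1}$, one sees that
\[
d\pi_m|_{\hat p}(\hat{\mathbb X}_j(\hat p)) = {\mathbb X}_j(p) + \bigl(0,\, v_j\rcontr\hat A^{m+1}\bigr),
\]
where ${\mathbb X}_j(p)=(\tilde v_j(a),v_j\rcontr A)$ is the frame vector of $\scr H^m_p$ from~\eqref{jmhvects} and the correction term $(0,v_j\rcontr\hat A^{m+1})$ lies in $\{0\}\times\HorDer^m(\frk g;W)$, which is exactly the span of the ${\mathbb Y}$-vectors of $\scr H^m_p$. (This is the content of the parenthetical reference to~\eqref{eq6177ee74}.) In particular the right-hand side of~\eqref{eq5ebc46a2} is always contained in $\scr H^m_p$.

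For the reverse inclusion, fix $\hat p_0=(a,\hat A_0)\in\pi_m^{-1}(p)\cap U$ and choose $\varepsilon>0$ so that every $\hat A_0 + \varepsilon t\, B$ with $B\in\HorDer^{m+1}(\frk g;W)$ and $|t|\le 1$ gives a point in $U$. The vectors $d\pi_m(\hat{\mathbb X}_j)$ computed at the $\pi_m$-fibre points $\hat p_0$ and $\hat p_0$ with $\hat A^{m+1}$ replaced by $\hat A_0^{m+1}+\varepsilon B$ differ exactly by $(0,\varepsilon\, v_j\rcontr B)$; taking differences, the span in~\eqref{eq5ebc46a2} contains $(0, v_j\rcontr B)$ for every $j$ and every $B\in\HorDer^{m+1}(\frk g;W)$. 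By Lemma~\ref{lem5ead3862}, these right-contractions $\{v\rcontr B : v\in V_1,\ B\in\HorDer^{m+1}(\frk g;W)\}$ linearly span $\HorDer^m(\frk g;W)$, so the span in~\eqref{eq5ebc46a2} contains all of $\{0\}\times\HorDer^m(\frk g;W)$, i.e.\ all the ${\mathbb Y}_k(p)$. It then also contains each ${\mathbb X}_j(p) = d\pi_m|_{\hat p_0}(\hat{\mathbb X}_j(\hat p_0)) - (0, v_j\rcontr\hat A_0^{m+1})$, and since the ${\mathbb X}_j(p)$ and ${\mathbb Y}_k(p)$ together span $\scr H^m_p$, equality holds.

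The only genuine input beyond bookkeeping is the surjectivity of right-contraction from $\HorDer^{m+1}$ onto $\HorDer^m$, which is precisely Lemma~\ref{lem5ead3862}; everything else is the explicit description of the frames in~\eqref{jmhvects} and the linearity of $d\pi_m$. The mild subtlety to be careful about is that a single fibre point $\hat p_0$ does \emph{not} suffice (its image $d\pi_m(\scr H^{m+1}_{\hat p_0})$ is only $r$-dimensional), so one must genuinely exploit that $U$ contains a neighbourhood of $\hat p_0$ inside the fibre $\pi_m^{-1}(p)\cong\HorDer^{m+1}(\frk g;W)$ in order to take the differences that produce the full $\{0\}\times\HorDer^m(\frk g;W)$. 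I expect this open-set argument — phrased so that only an arbitrarily small open piece of the fibre is used — to be the main point to state carefully, though it is not technically hard.
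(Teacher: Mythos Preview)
Your proof is correct and follows essentially the same route as the paper: compute $d\pi_m$ on the horizontal frame to get the easy inclusion, then perturb a base point $\hat p_0$ within the fibre by small multiples of a basis of $\HorDer^{m+1}(\frk g;W)$, take differences to produce all $(0,v_j\rcontr B)$, and invoke Lemma~\ref{lem5ead3862}. One small wording fix: your choice of $\varepsilon$ should quantify over $B$ in a fixed basis (or a bounded set), not all of $\HorDer^{m+1}(\frk g;W)$; the paper handles this by scaling the basis elements $B_j$ so that each $\hat q_j=(a,A^{\le m+1}+B_j)$ lies in $U$.
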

\begin{proof}
	\ic
	Let $\hat p=(a,A^{\le m+1}) \in \pi_m^{-1}(p) \cap U$. Since elements of $\scr H^{m+1}_{\hat p}$ have the form
	$(\tilde v(a),v\rcontr A^{\le m+1}+B^{m+1})$ where $v\in V_1$ and $B^{m+1}\in\HorDer^{m+1}(\frk g;W)$, it follows that
	\begin{equation}\label{eq6177ee74}
	d\pi_m|_{\hat p}\big ( (\tilde v(a),v\rcontr A^{\le m+1}+B^{m+1}) \big ) 
	= (\tilde v(a),v\rcontr A^{\le m+1}) \in \scr H^{m}_{p}.
	\end{equation}
	Since $\hat p \in \pi_m^{-1}(p) \cap U$ is arbitrary we can conclude that the right-hand side of $\eqref{eq5ebc46a2}$ is  a subset of $\scr H^{m}_{p}$.

	\ci 
	By Lemma~\ref{lem5ead3862}, if $\{v_1,\dots,v_r \} \subset V_1$ is a basis for $V_1$ and $\{B_1,\dots ,B_N\} \subset \HorDer^{m+1}(\frk g;W)$  is basis for  $\HorDer^{m+1}(\frk g;W)$, then $\{ v_i\rcontr B_j : i=1,\dots,r, j=1,\dots,N\}$ contains a basis for $\HorDer^{m}(\frk g;W)$. 
	Given $\hat p=(a,A^{\le m+1}) \in \pi_m^{-1}(p) \cap U$, define $\hat q_j \in \Jet^{m+1}(\frk g;W)$ by $\hat q_j=(a,A^{\le m+1} + B_j)$. 
	It follows that $\hat q_j \in \pi_m^{-1}(p)$ and by scaling if necessary, we can assume that each $B_j$ is close enough to $0$ so that $\hat q_j\in U$. 

	The left invariant field generated by $(v_i,0)$ evaluated at $\hat q_j$ is $$ \big(\tilde v_i (a), v_i \rcontr(A^{\le m+1} + B_j) \big )$$ and by definition belongs to $\scr H_{\hat q_i}^{m+1}$. Furthermore, 
	\[
	d\pi_m|_{\hat q_j}\big ( (\tilde v_j(a),v_j\rcontr (A^{\le m+1}+B_j)) \big )
	= (\tilde v_j(a),v_j\rcontr A^{\le m+1} + v_j\rcontr B_j) \in H,
	\] 
	where $H$ denotes the right hand side of \eqref{eq5ebc46a2}. Since $(\tilde v_j(a),v_j\rcontr A^{\le m+1}) \in H$ for all $j$, it follows that $(0,v_j\rcontr B_j)\in H$ for all $j$ and we conclude that $\HorDer^{m}(\frk g;W)\subset H$.
\end{proof}

\subsection{Contact maps}
Given a smooth map $F:\Omega\to\Jet^m(G;W)$ from an open set $\Omega\subset\Jet^m(G;W)$,
we write it component wise as
\[
F(a,A) = \left(F_G(a,A) , F^0(a,A), \dots ,F^m(a,A) \right)
\]
where $F_G:\Omega\to G$ and $F^k:\Omega\to \HorDer^k(\frk g;W)$ are smooth maps. 
Furthermore, $F$ is said to be a \emph{contact map} if $d F(\scr H^m)\subset\scr H^m$ which in the case $m \geq 1$, is characterised by the following conditions:
	\begin{equation}\label{eq5eafe676}
	\begin{array}{rll}
	\omega^\ell(F(p)) \big (d F(p) {\mathbb{X}}_j(p) \big ) = 0, && 0\le\ell\le m-1,\ \forall j , \\
	\omega^\ell(F(p))\big (d F(p) {\mathbb{Y}}_j(p)\big ) = 0, && 0\le\ell\le m-1,\ \forall j , \\
	\theta^k(F(p))\big (d F(p) {\mathbb{X}}_j(p)\big ) = 0, && 2\le k \le s,\ \forall j , \\
	\theta^k(F(p))\big (d F(p) {\mathbb{Y}}_j(p)\big ) = 0, && 2\le k \le s,\ \forall j , \\
	\end{array}
	\end{equation}
	for all $p\in\Omega$. 
	In the case $m=0$, the conditions reduce to those given by the forms $\theta^k$ and simply mean that $F$ is contact map of some open set $\Omega \subseteq G\times W$, where 
	$\scr H^0_{(e_G,0)} = \jet^0(\frk g;W) = V_1\times W$.  
Note that in the case $G=\R^n$, the definition is consistent with the usual contact system as defined in \cite[chapter 4]{olver1995equivalence}.

\subsection{Prolongation} 
Suppose that $\Omega\subset\Jet^m(G;W)$ is open, and that $F:\Omega\to\Jet^m(G;W)$ is a contact map. 
If $\hat \Omega \subset \Jet^{m+1}(G;W)$ is open and satisfies $\pi_m( \hat \Omega) \subseteq \Omega$, then a smooth map $\hat F: \hat \Omega \to \Jet^{m+1}(G;W)$ satisfying
\begin{equation}\label{pimhatFFpim}
\pi_m \circ \hat F = F \circ \pi_m
\end{equation}
is called a \emph{prolongation of $F$} if $\hat F$ is a contact map. 
Since $\hat \omega^\ell =\pi_m^* \omega^\ell$, $\ell=1,\dots,m-1$, and $\hat \theta^k =\pi_m^* \theta^k$, $k=2,\dots,s$, any map $\hat F$ that satisfies condition \eqref{pimhatFFpim}, immediatley satisfies all the contact conditions~\eqref{eq5eafe676} on $\Jet^{m+1}(G;W)$ except those corresponding to $ \hat \omega^m$. 
In particular $\hat F_G = F_G \circ \pi_m$,  $\hat F^i = F^i \circ \pi_m$ for $i=0,\dots,m$ and $\hat F^{m+1}$ is determined by the contact conditions corresponding to $\hat \omega^m$. More precisely, the conditions
\begin{equation} \label{trivyj}
\hat \omega^m(\hat F(\hat p)) (d \hat F(\hat p) {\mathbb{\hat Y}}_j(\hat p)) = 0
\end{equation}
are trivial since from \eqref{pimhatFFpim} we have
\[
	d\pi_m (\hat F(\hat p))  d\hat F(\hat p)  {\mathbb{\hat Y}}_j(\hat p) = dF(p) d\pi_m(\hat p)   {\mathbb{\hat Y}}_j(\hat p)= dF(\pi_m (\hat p))0=0
\]
which implies that 
\[
	d\hat F(\hat p)  {\mathbb{\hat Y}}_j(\hat p) \in \text{ker} d\pi_m (\hat F(\hat p)) 
\]
or equivalently 
\begin{equation}\label{eq617657d3}
	d\hat F(\hat p)  {\mathbb{\hat Y}}_j(\hat p) 
	\in \Span\{ {\mathbb{\hat Y}}_k(\hat p) : k = 1,\dots \}
\end{equation}
giving \eqref{trivyj}.

It follows that the determining conditions for $\hat F^{m+1}$ are the equations
\begin{equation}\label{prolongmtomplusone}
\hat \omega^m(\hat F(\hat p))( d \hat F(\hat p)  {\mathbb{\hat X}}_j(\hat p)) = 0 \quad \forall j. 
\end{equation} 

The existence of prolongations is governed by the following result.

\begin{theorem}[Prolongation Theorem]\label{thm5eafe9c5}
	Supppose $m\ge0$, $\Omega\subset\Jet^m(G;W)$ is open, and that $F:\Omega\to\Jet^m(G;W)$ is a contact map.
	Let $v_1,\dots,v_r$ be a basis of $V_1$ and, for $j\in\{1,\dots,r\}$, define
	$\tilde N_j:\Jet^{m+1}(G;W) \to TG$ as 
	\[
	\tilde N_j (\hat p) 
	= dF_G({\pi_m(\hat p)}) d\pi_m(\hat p) \hat{\bb X}_j(\hat p). 
	\]
	Define $\hat \Omega\subset\Jet^{m+1}(G;W)$ as the open set where $\tilde N_1,\dots,\tilde N_r$ are pointwise linearly independent.
	Then there is a unique contact map $\hat F:\hat \Omega\to\Jet^{m+1}(G;W)$ such that 
	\[
	\pi_m\circ \hat F = F\circ\pi_m.
	\]
	Moreover, if $F$ is a diffeomorphism, then $\hat F$ is also a diffeomorphism and $\hat F^{-1}$ is the prolongation of $F^{-1}$.
\end{theorem}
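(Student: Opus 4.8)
The plan is to show that $\pi_m\circ\hat F=F\circ\pi_m$ forces every component of $\hat F$ except the top one, and that the remaining contact conditions then reduce to a \emph{linear} system for $\hat F^{m+1}$ which has a unique solution precisely on $\hat\Omega$; the one delicate point will be that this solution, a priori only an element of $\mathtt{Lin}^{m+1}(V_1;W)$, actually lies in $\HorDer^{m+1}(\frk g;W)$. As noted in the discussion preceding the statement, any $\hat F$ with $\pi_m\circ\hat F=F\circ\pi_m$ satisfies $\hat F_G=F_G\circ\pi_m$ and $\hat F^i=F^i\circ\pi_m$ for $i\le m$, and, since $\hat\omega^\ell=\pi_m^*\omega^\ell$ for $\ell\le m-1$, $\hat\theta^k=\pi_m^*\theta^k$, and $d\pi_m$ annihilates the ${\mathbb{\hat Y}}_j$, the only contact conditions left are $\hat\omega^m(\hat F(\hat p))(d\hat F(\hat p){\mathbb{\hat X}}_j(\hat p))=0$, $j=1,\dots,r$. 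Unwinding the definition of $\hat\omega^m$, these are equivalent to
\[
w_j(\hat p)\rcontr\hat F^{m+1}(\hat p)=Z_j(\hat p),\qquad j=1,\dots,r,
\]
with $w_j(\hat p):=\Pi_1\bigl(dL_{F_G(\pi_m\hat p)}^{-1}\tilde N_j(\hat p)\bigr)\in V_1$ and $Z_j(\hat p):=dF^m(\pi_m\hat p)\,d\pi_m(\hat p){\mathbb{\hat X}}_j(\hat p)\in\HorDer^m(\frk g;W)$, both depending only on $F$ and $\hat p$. Because $F$ is contact and $d\pi_m(\hat p){\mathbb{\hat X}}_j(\hat p)\in\scr H^m$, one has $\tilde N_j(\hat p)\in\scr H_G$, so $dL_{F_G(\pi_m\hat p)}^{-1}\tilde N_j(\hat p)\in V_1$ already and $w_j(\hat p)=dL_{F_G(\pi_m\hat p)}^{-1}\tilde N_j(\hat p)$; hence on $\hat\Omega$, where the $\tilde N_j$ are linearly independent, $\{w_1(\hat p),\dots,w_r(\hat p)\}$ is a basis of $V_1$. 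By multilinearity the displayed system has at most one solution $\hat F^{m+1}(\hat p)\in\mathtt{Lin}^{m+1}(V_1;W)$, and expressing the $v_i$ in this basis exhibits it, via Cramer's rule, as a smooth function of $\hat p\in\hat\Omega$. This gives the uniqueness of $\hat F$ and reduces existence to the claim that $\hat F^{m+1}(\hat p)\in\HorDer^{m+1}(\frk g;W)$.

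\textbf{The solution is a horizontal derivative (the crux).} This is the heart of the argument. Fix $\hat p=(a,\hat A)\in\hat\Omega$, $p:=\pi_m(\hat p)$, and by Corollary~\ref{cor5eafe914} (applied with $m+1$ in place of $m$) pick $f\in C^\infty(G;W)$ with $A^{\le m+1}_{f,a}=\hat A$, so that $\Jet^{m+1}f(a)=\hat p$ and $\Jet^m f(a)=p$. Put $\beta:=F\circ\Jet^m f$ (defined near $a$) and $\beta_G:=\pi_G\circ\beta$. From $d\Jet^m f(\scr H_G)\subset\scr H^m$ (Proposition~\ref{prop5eafeb43}) and $dF(\scr H^m)\subset\scr H^m$ we get $d\beta_G(\scr H_G)\subset\scr H_G$, and $d\beta_G(y)\tilde v_j(y)=\tilde N_j(\Jet^{m+1}f(y))$ is linearly independent for $y$ near $a$ because $\hat\Omega$ is open; by Lemma~\ref{lem6177fbb7}, $\beta_G$ is a diffeomorphism of a neighborhood of $a$ onto a neighborhood of $F_G(p)$. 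Then $g:=\beta\circ\beta_G^{-1}$ is a local section of $\Jet^m(G;W)$ with $dg(\scr H_G)\subset d\beta(\scr H_G)\subset\scr H^m$, so by Proposition~\ref{prop5eafeb43} (whose proof is purely local) $g=\Jet^m h$ for a smooth $h$ defined near $F_G(p)$. Comparing $\HorDer^m$-components in $\beta=g\circ\beta_G$ gives $F^m\circ\Jet^m f=A^m_{h,\cdot}\circ\beta_G$ near $a$; differentiating along $\tilde v_j(a)$, using $d\Jet^m f(a)\tilde v_j(a)=d\pi_m(\hat p){\mathbb{\hat X}}_j(\hat p)$ on the left and $d\beta_G(a)\tilde v_j(a)=\tilde N_j(\hat p)=dL_{F_G(p)}w_j(\hat p)$ together with \eqref{eq05161949} on the right, we obtain $Z_j(\hat p)=w_j(\hat p)\rcontr A^{m+1}_{h,F_G(p)}$ for all $j$. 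Thus $A^{m+1}_{h,F_G(p)}\in\HorDer^{m+1}(\frk g;W)$ solves the system, and by the uniqueness from the first step $\hat F^{m+1}(\hat p)=A^{m+1}_{h,F_G(p)}$, independently of all choices. Hence $\hat F=(F_G\circ\pi_m,F^0\circ\pi_m,\dots,F^m\circ\pi_m,\hat F^{m+1})$ is a well-defined smooth map $\hat\Omega\to\Jet^{m+1}(G;W)$ with $\pi_m\circ\hat F=F\circ\pi_m$, and it is a contact map by the reduction and the construction; it is unique.

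\textbf{The diffeomorphism part.} Here the plan is to use uniqueness twice. First, $\widehat{\mathrm{id}_\Omega}=\mathrm{id}_{\pi_m^{-1}(\Omega)}$: for $F=\mathrm{id}$ the vectors $\tilde N_j(\hat p)=d\pi_m(\hat p){\mathbb{\hat X}}_j(\hat p)$ have $G$-component $\tilde v_j$, independent everywhere, so the prolongation domain is all of $\pi_m^{-1}(\Omega)$, and the identity map is clearly a prolongation of the identity, hence the prolongation. Uniqueness then also yields functoriality: if $F,G$ are composable contact maps, $\widehat{G\circ F}$ and $\hat G\circ\hat F$ are both prolongations of $G\circ F$ and so agree wherever both are defined. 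Now let $F:\Omega\to F(\Omega)$ be a contact diffeomorphism; using the chain rule identity $d(\pi_G\circ F^{-1})(F(p))\,dF(p)=d\pi_G(p)$ and the contact property of $\hat F$, one checks that $\tilde v_j(a)=\sum_k c_{jk}\,\tilde N_k^{F^{-1}}(\hat F(\hat p))$ for a matrix $(c_{jk})$ invertible on $\hat\Omega$, where $\tilde N_k^{F^{-1}}$ is defined as in the statement but for $F^{-1}$; hence $\hat F$ maps $\hat\Omega$ into the domain of $\widehat{F^{-1}}$. Therefore $\widehat{F^{-1}}\circ\hat F$ is defined on all of $\hat\Omega$ and is a prolongation of $\mathrm{id}_\Omega$, so it equals $\mathrm{id}_{\hat\Omega}$; symmetrically $\hat F\circ\widehat{F^{-1}}=\mathrm{id}$. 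Thus $\hat F$ is a diffeomorphism with inverse $\widehat{F^{-1}}$, the prolongation of $F^{-1}$.

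\textbf{Main obstacle.} The only genuinely nontrivial point is the second step: the linear system of the first step determines $\hat F^{m+1}(\hat p)$ only as an element of $\mathtt{Lin}^{m+1}(V_1;W)$, and all the geometric input — openness of $\hat\Omega$, Lemma~\ref{lem6177fbb7}, the jet characterization of Proposition~\ref{prop5eafeb43}, and the construction of the auxiliary function $h$ — is needed precisely to show that this element in fact lies in $\HorDer^{m+1}(\frk g;W)$.
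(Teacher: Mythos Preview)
Your proof is correct and follows the same overall strategy as the paper's: reduce the contact conditions on a prospective $\hat F$ with $\pi_m\circ\hat F=F\circ\pi_m$ to the linear system $w_j(\hat p)\rcontr\hat F^{m+1}(\hat p)=Z_j(\hat p)$, solve it uniquely on $\hat\Omega$, and then show the solution actually lies in $\HorDer^{m+1}(\frk g;W)$ by exhibiting it as $A^{m+1}_{h,F_G(p)}$ for an auxiliary $h$; finally, treat the diffeomorphism case by uniqueness of prolongations. The only differences are in packaging. For the crux step the paper isolates a technical Lemma~\ref{lem5eafdaf1} which constructs $h$ by redoing the induction $(\Jet^mh)^k=F^k\circ\Jet^mf\circ\phi$; you get $h$ more economically by noting that $g=\beta\circ\beta_G^{-1}$ is a horizontal section and invoking Proposition~\ref{prop5eafeb43} as a black box, which already contains that induction. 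For the diffeomorphism part the paper verifies $\hat F(\hat\Omega_F)\subset\hat\Omega_{F^{-1}}$ via the identification $\hat F(\hat p)=\Jet^{m+1}h(\beta_G(a))$ and the commuting diagram~\eqref{eq619de3b7}, whereas you do it by a direct chain-rule computation expressing $\tilde v_j(a)$ in the $\tilde N_k^{F^{-1}}(\hat F(\hat p))$; both arguments unwind to the same linear-algebra fact that the matrix expressing $d\hat F(\hat p){\mathbb{\hat X}}_j$ in the frame $\{{\mathbb{\hat X}}_k,{\mathbb{\hat Y}}_l\}$ has invertible ${\mathbb{\hat X}}$-block on $\hat\Omega$.
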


\begin{remark}
	Uniqueness of the prolongation $\hat F$ is proved only on $\hat\Omega$ (cfr.~\eqref{eq5eafde7c}).
	For instance, a constant map $F$ is contact and admits infinite prolongations, but $\hat\Omega=\emptyset$.
\end{remark}

\begin{remark}\label{rem61765cb1}
	We don't know the size of $\hat\Omega$ in general.
	{\it If $F$ is itself a prolongation of a contact map on $\Jet^{m-1}(G;W)$, 
	then $\hat\Omega = \pi_m^{-1}(\Omega)$.}
	Notice that, by the results in the following Section~\ref{sec616fa813}, $F$ is a prolongation as soon as $m\ge2$.
	
	Let's prove the above statement about $\hat\Omega$.
	First, notice that, if $\hat p = (a,A^{\le m+1})$ and $p=\pi_m(\hat p)$, then
	\[
	d\pi_m(\hat p) \hat{\bb X}_j(\hat p)
	= (\tilde v_j(a) , v_j\rcontr A^{\le m} + v_j\rcontr A^{m+1}) 
	= \bb X_j(p) +  v_j\rcontr A^{m+1} ,
	\]
	where $ v_j\rcontr A^{m+1}\in \HorDer^{m}(\frk g;W)$.
	
	Second, if $ \pi_G:\Jet^{m}(G;W)\to G$ is the projection onto $G$, then
	the restriction $d\pi_G(q)|_{\scr H^{m}} :\scr H^{m}(q) \to \scr H_G(\pi_G(q))$ is surjective with kernel $\HorDer^{m}(\frk g;W)$, for every $q\in \Jet^{m}(G;W)$.
	
	Third, if $F$ is a prolongation, then it satisfies~\eqref{eq617657d3} (without hats).
	In particular, the span of the vectors $dF(p) \bb X_j(p)$ is transversal to $\HorDer^{m}(\frk g;W)$.
	Hence, $d\pi_G(F(p))$ is an isomorphism between the span of the vectors $dF(p) \bb X_j(p)$ and $\scr H_G(\pi_G(q))$.	
	
	Thus, 
	\[
	\tilde N_j (\hat p) 
	= dF_G({\pi_m(\hat p)}) d\pi_m(\hat p) \hat{\bb X}_j(\hat p)
	= d\pi_G(F(p)) dF(p) \bb X_j(p) 
	\]
	are linearly independent.
\end{remark}

For the proof of Theorem~\ref{thm5eafe9c5}, we need the following technical lemma.

\begin{lemma}\label{lem5eafdaf1}
	Assuming the hypothesis of Theorem~\ref{thm5eafe9c5},
	for every $f:G\to W$ smooth and $a\in G$ such that $\Jet^{m+1}f(a)\in\hat \Omega$, there exists a smooth function $h:G\to W$  and a neighborhood $U$ of $a$, such that 
	\begin{equation}\label{eq06011605}
	F\circ \Jet^mf|_{U} = \Jet^mh\circ F_G\circ\Jet^mf|_{U} .
	\end{equation}
	Moreover, $F_G\circ\Jet^mf(U)$ is an open subset of $G$ and $F_G\circ\Jet^mf$ is a diffeomorphism on $U$.
\end{lemma}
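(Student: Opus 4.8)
The plan is to express both sides of~\eqref{eq06011605} through holonomic sections and to apply Proposition~\ref{prop5eafeb43} together with Lemma~\ref{lem6177fbb7}. Throughout, write $\sigma=\Jet^mf:G\to\Jet^m(G;W)$ and $\phi=F_G\circ\sigma=\pi_G\circ F\circ\sigma$. Since $\pi_m(\Jet^{m+1}f(a))=\sigma(a)$ and $\hat\Omega\subset\pi_m^{-1}(\Omega)$ (the $\tilde N_j$ are only defined over $\Omega$), the point $\sigma(a)$ lies in $\Omega$, so $\phi$ is defined on the open neighbourhood $\sigma^{-1}(\Omega)$ of $a$.

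\emph{Step 1: reading the hypothesis $\Jet^{m+1}f(a)\in\hat\Omega$.} Using the formula $d\sigma(\tilde v(p))=(\tilde v(p),v\rcontr A^{\le m+1}_{f,p})$ from the proof of Proposition~\ref{prop5eafeb43}, together with the description~\eqref{jmhvects} of the frame $\hat{\bb X}_j$, one checks that $d\pi_m(\hat p)\,\hat{\bb X}_j(\hat p)=d\sigma(\tilde v_j(a))$ for $\hat p=\Jet^{m+1}f(a)$. Hence $\tilde N_j(\Jet^{m+1}f(a))=dF_G(\sigma(a))\,d\sigma(\tilde v_j(a))=d\phi(\tilde v_j(a))$, so the assumption $\Jet^{m+1}f(a)\in\hat\Omega$ is exactly the statement that $d\phi|_{\scr H_G(a)}$ is injective.

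\emph{Step 2: $\phi$ is a local diffeomorphism at $a$.} Since $\sigma$ is holonomic, $d\sigma(\scr H_G)\subset\scr H^m$ by Proposition~\ref{prop5eafeb43}; since $F$ is contact, $dF(\scr H^m)\subset\scr H^m$; and $d\pi_G(\scr H^m)=\scr H_G$. Therefore $d\phi(\scr H_G)\subset\scr H_G$ on all of $\sigma^{-1}(\Omega)$, and combining this with Step~1 and Lemma~\ref{lem6177fbb7} (whose proof is local, so it applies to $\phi$ near $a$) we get that $d\phi(a)$ is a linear isomorphism. By the inverse function theorem there is an open neighbourhood $U$ of $a$, which we may shrink so that $\sigma(U)\subset\Omega$, on which $\phi$ restricts to a diffeomorphism onto the open set $\phi(U)=F_G\circ\Jet^mf(U)$; this is the ``moreover'' assertion. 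Because $d\phi(a)$ maps the $r$-dimensional space $\scr H_G(a)$ into the $r$-dimensional space $\scr H_G(\phi(a))$, after shrinking $U$ further $d\phi$ carries $\scr H_G$ isomorphically onto $\scr H_G$, so the inverse $\psi:=(\phi|_U)^{-1}:\phi(U)\to U$ satisfies $d\psi(\scr H_G)=\scr H_G$.

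\emph{Step 3: producing $h$.} Set $\gamma:=F\circ\sigma\circ\psi:\phi(U)\to\Jet^m(G;W)$. Then $\pi_G\circ\gamma=\phi\circ\psi=\Id$, so $\gamma$ is a section, and $d\gamma(\scr H_G)=dF\bigl(d\sigma(d\psi(\scr H_G))\bigr)\subset dF\bigl(d\sigma(\scr H_G)\bigr)\subset dF(\scr H^m)\subset\scr H^m$. The implication \ref{item5ead4607}$\THEN$\ref{item5ead4605} of Proposition~\ref{prop5eafeb43} is proved pointwise, hence remains valid for a section over the open set $\phi(U)$; it produces a smooth $h_0:\phi(U)\to W$ (the zeroth component of $\gamma$) with $\gamma=\Jet^mh_0$. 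Finally, extend $h_0$ by a cut-off to a smooth $h:G\to W$ agreeing with $h_0$ on an open neighbourhood $V$ of $\phi(a)$ with $V\subset\phi(U)$, and replace $U$ by $\phi^{-1}(V)\cap U$. On this $U$ one has $F\circ\Jet^mf=\gamma\circ\phi=\Jet^mh_0\circ\phi=\Jet^mh\circ F_G\circ\Jet^mf$, which is~\eqref{eq06011605}. The only genuinely delicate point is Step~1: identifying $\hat\Omega$ with the locus where $\phi$ is an immersion along $\scr H_G$ requires careful book-keeping of the canonical identifications of $T\Jet^m(G;W)$ and $T\Jet^{m+1}(G;W)$, of the action of $d\pi_m$, and of the explicit form of $d(\Jet^mf)$ on horizontal vectors; once that is settled, the rest is a direct application of Lemma~\ref{lem6177fbb7}, the inverse function theorem, and Proposition~\ref{prop5eafeb43}.
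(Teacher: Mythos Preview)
Your proof is correct. Steps~1 and~2 match the paper almost verbatim: your identification $\tilde N_j(\Jet^{m+1}f(a))=d\phi(\tilde v_j(a))$ is precisely the paper's equation~\eqref{eq619de09a}, and the appeal to Lemma~\ref{lem6177fbb7} and the inverse function theorem is identical.

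The genuine difference is Step~3. The paper defines $h=F^0\circ\Jet^mf\circ\phi$ and then proves by induction on $k$ that $(\Jet^mh)^k=F^k\circ\Jet^mf\circ\phi$, unwinding the contact conditions on $F$ explicitly as $dF^\ell(p)\bigl(d\pi_m(\hat p)\hat{\bb X}_j(\hat p)\bigr)=N_j(\hat p)\rcontr F^{\ell+1}(p)$ and combining this with~\eqref{eq05161949}. You instead observe once and for all that $\gamma=F\circ\sigma\circ\psi$ is a section with $d\gamma(\scr H_G)\subset\scr H^m$ and invoke Proposition~\ref{prop5eafeb43} directly; the induction is absorbed into the already-proved implication \ref{item5ead4607}$\Rightarrow$\ref{item5ead4605}. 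Your route is cleaner and more conceptual for the lemma in isolation. The paper's route pays a dividend immediately afterwards: its explicit inductive computation~\eqref{eq5eafdd4f} is reused verbatim (with $\ell=m$, stopping one step before the last) in the proof of Theorem~\ref{thm5eafe9c5} to show that the candidate $\hat F^{m+1}(\hat p)$ actually lies in $\HorDer^{m+1}(\frk g;W)$. If you adopt your argument you will need a separate small computation there, but nothing is lost.
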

\begin{equation}\label{eq619de3b7}
\xymatrix{
\Jet^m(G;W)\ar[r]^{F}\ar[dr]_{F_G} & \Jet^m(G;W) \ar@<-.5ex>[d]_{\pi_G} \\
G\ar[u]^{\Jet^mf}\ar[r]_{F_G\circ\Jet^mf} & G\ar@{-->}@<-.5ex>[u]_{\Jet^mh}
}
\end{equation}
\begin{proof}
	First, notice that, if $\hat p = (a,A^{\le m+1})$ and $p=\pi_m(\hat p)=(a,A^{\le m})$, then
	\[
	d\pi_m(\hat p) \hat{\bb X}_j(\hat p)
	= (\tilde v_j(a) , v_j\rcontr A^{\le m} + v_j\rcontr A^{m+1}) 
	= \bb X_j(p) +  (0,v_j\rcontr A^{m+1}) \in\scr H^m_p,
	\]
	where $ v_j\rcontr A^{m+1}\in \HorDer^{m}(\frk g;W)$.
	Since $F$ and the projection $\pi_G$ to $G$ are contact, $F_G=\pi_G\circ F$ is also contact.
	Thus, $\tilde N_j$ takes values in $\scr H_G$.
	Define:
	\begin{equation}\label{eq5eafda06}
	N_j(\hat p) = d L_{F_G(p)}^{-1} \tilde N_j(\hat p)\in V_1 .
	\end{equation}
	
	Second, since $F$ is contact,  it follows that for all $0\le\ell\le m-1$ and all $\hat p \in \hat \Omega \subset \Jet^{m+1}(G;W)$, we have that
	\begin{align*}
	0 
	&= \omega^\ell(F(p))(d F(p)d\pi_m(\hat p) \hat{\bb X}_j(\hat p))\\
	&= d F^\ell(p) d\pi_m(\hat p) \hat{\bb X}_j(\hat p) 
		-  \left( d L_{F_G(p)}^{-1}   d F_G(p) d\pi_m(\hat p) \hat{\bb X}_j(\hat p)\right)_1 \rcontr F^{\ell+1}(p) \\
	&= d F^\ell(p) d\pi_m(\hat p) \hat{\bb X}_j(\hat p) 
		-  d L_{F_G(p)}^{-1}  \tilde N_j(\hat p) \rcontr F^{\ell+1}(p) \\
	&= d F^\ell(p) d\pi_m(\hat p) \hat{\bb X}_j(\hat p)
		- N_j( \hat p)\rcontr F^{\ell+1}(p) ,
	\end{align*}
	that is
	\begin{equation}\label{eq06011622}
	d F^\ell(p) \left ( d\pi_m(\hat p) \hat{\bb X}_j(\hat p) \right )
	= N_j(\hat p)\rcontr F^{\ell+1}(p).
	\end{equation}
	
	Next, let $f:G\to W$ be a smooth function.
	Notice that
	\begin{equation}\label{eq619de09a}
	\begin{aligned}
	d(F_G\circ\Jet^mf)(a) \tilde v_j(a)
	&= d F_G(\Jet^mf(a)) \left( \tilde v_j(a) , \left.\frac{\dd}{\dd t}\right|_{t=0} A^{\le m}_{f,a\exp(tv_j)} \right) \\
	&= d F_G(\Jet^mf(a)) \left( \tilde v_j(a) , v_j\rcontr A^{\le m+1}_{f,a}  \right)  \\
	&= \tilde N_j(\Jet^{m+1}f(a)) .
	\end{aligned}
	\end{equation}
	
	Since the vectors $\tilde N_j(\Jet^{m+1}f( a))$ are linearly independent, Lemma~\ref{lem6177fbb7} ensures that the map $F_G\circ\Jet^mf:G\to G$ is a diffeomorphism from a neighborhood $U_1$ of $ a$ to a neighborhood $U$ of $F_G\circ\Jet^mf( a)$ in $G$.
	Let $\phi:U\to U_1$ be its inverse.
	The  computations above, now read as
	\begin{equation}\label{eq06011620}
	\tilde v_j|_{U_1} = d\phi\circ\tilde N_j\circ \Jet^{m+1}f|_{U_1} .
	\end{equation}
	
	Define $h:U\to W$ by $h=F^0\circ\Jet^mf\circ \phi$.
	We shall prove that, for all $0\le k\le m$, 
	\begin{equation}\label{eq06011603}
	(\Jet^mh)^k = F^k\circ\Jet^mf\circ\phi ,
	\end{equation}
	which then implies~\eqref{eq06011605}.
	
	We prove~\eqref{eq06011603} by induction on $k$.
	For $k=0$, the identity~\eqref{eq06011603} is the definition of $h$. So we assume~\eqref{eq06011603} holds for $k=\ell \leq m-1$ and consider it for $k=\ell+1\le m$.
	Let $b\in U$, $a=\phi(b)$ and $j\in\{1,\dots,r\}$.
	Then
	{\small
	\begin{equation}\label{eq5eafdd4f}
	\begin{aligned}
	N_j(a,A^{\le m+1}_{f,a})\rcontr (\Jet^mh)^{\ell+1}(b) 
	&\overset{(i)}= \left.\frac{\dd}{\dd t}\right|_{t=0} (\Jet^mh)^{\ell}(b\exp(tN_j(a,A^{\le m+1}_{f,a})))\\
	&\overset{(ii)}= \left.\frac{\dd}{\dd t}\right|_{t=0} (F^\ell\circ\Jet^mf)(\phi(b\exp(tN_j(a,A^{\le m+1}_{f,a}))))\\
	&\overset{(iii)}= \left.\frac{\dd}{\dd t}\right|_{t=0} (F^\ell\circ\Jet^mf)(\phi(b)\exp(tv_j))\\
	&\overset{(iv)}= d F^\ell|_{(a,A^{\le m}_{f,a})}\left ((\tilde v_j(a),v_j\rcontr A^{\le m+1}_{f,a})\right ) \\
	&\overset{(v)}= N_j(a,A^{\le m+1}_{f,a})\rcontr F^{\ell+1}(a,A^{\le m}_{f,a}) ,
	\end{aligned}
	\end{equation}
	}
	where: 
	$(i)$ follows from~\eqref{eq05161949},
	$(ii)$ is the inductive hypothesis,
	$(iii)$ follows from~\eqref{eq06011620},
	$(iv)$ is simply the computation of the differential
	and $(v)$ follows from~\eqref{eq06011622}.
	Since the vectors $N_j$ form a basis of $V_1$, we conclude that $(\Jet^mh)^{\ell+1}(b) = F^{\ell+1}(a,A^{\le m}_{f,a})$, that is,~\eqref{eq06011603} holds for $k=\ell+1$.
\end{proof}

\begin{proof}[Proof of Theorem~\ref{thm5eafe9c5}]
	The determining conditions at \eqref{prolongmtomplusone} become
	\begin{equation}\label{eq5eafde7c}
	d  \hat F^m( \hat p)   {\mathbb{\hat X}}_j(\hat p)
	= N_j(\hat p)\rcontr \hat F^{m+1}(\hat p) \quad \forall j ,
	\end{equation}
	where $N_j$ are defined in~\eqref{eq5eafda06}.
	Given that $\{ N_j(\hat p):j=1,\dots, r \}$ is a basis of $V_1$ 
	there exists a unique $\hat F^{m+1}(\hat p)\in \Lin^{m+1}(V_1;W)$ that satisfies~\eqref{eq5eafde7c}.
	
	We need to show that $\hat F^{m+1}(\hat p)\in \HorDer^{m+1}(\frk g;W)$.
	Write $\hat p = (a,A^{\le m+1})\in\hat \Omega$, let $f:G\to W$ be a smooth function with $A^{\le m+1}=A_{f,a}^{\le m+1}$, which exists by Corollary~\ref{cor5eafe914}, and let $h$ be a smooth function as in Lemma~\ref{lem5eafdaf1}. 
	We can perform again the computations in~\eqref{eq5eafdd4f} with $\ell=m$ until the second to last step and obtain
	\[
	N_j(a,A^{\le m+1}_{f,a})\rcontr (\Jet^{m+1}h)^{m+1}(F_G(a,A^{\le m}))
	= d F^m|_{(a,A^{\le m}_{f,a})}[(\tilde v_j(a),v_j\rcontr A^{\le m+1}_{f,a})] ,
	\]
	which is~\eqref{eq5eafde7c}
	with $\hat F^{m+1}(\hat p) = (\Jet^mh)^{m+1}(F_G(\hat p)) \in \HorDer^{m+1}(\frk g;W)$.
	
	Finally, suppose that $F$ is a diffeomorphism.
	Then $dF(\scr H^m_p)=\scr H^m_{F(p)}$ for all $p\in\Omega$ and thus $F^{-1}$ is also a contact map. Let $\hat\Omega_F$ denote the domain of $\hat F$ and let $\hat\Omega_{F^{-1}}$ denote the domain of the prolongation of $F^{-1}$ as defined above.
	
	We claim that $\hat F(\hat\Omega_F) \subset \hat\Omega_{F^{-1}}$.
	Indeed, let $\hat p\in\hat\Omega_F$ and set $\hat q=\hat F(\hat p)$.
	By the previous discussion, we know that $\hat q = \Jet^{m+1}h( a)$, where $h$ is given by Lemma~\ref{lem5eafdaf1}.
	Now notice that by~\eqref{eq619de09a}, the linear independence of the vectors $\tilde N_j(\hat p)$ is equivalent to the non-singularity of the differential $d(\pi_G\circ F\circ\Jet^mf)( a)$.
	Since the diagram~\eqref{eq619de3b7} commutes, the differential $d(\pi_G\circ F^{-1}\circ\Jet^mh)( b)$, where $ b=\pi_G(\hat q)$, is also non-singular.
	By~\eqref{eq619de09a} again, this time with $F^{-1}$ replacing $F$, we conclude that the vectors $\tilde N_j(\hat q)$ defined for $F^{-1}$ are also linearly independent and thus $\hat q\in \hat \Omega_{F^{-1}}$. 
	
	Let $H:\hat\Omega_{F^{-1}}\to\Jet^{m+1}(G;W)$ be the prolongation of $F^{-1}$.
	Since $H\circ\hat F$ is a prolongation of $F^{-1}\circ F = \Id_{\Omega}$, and since the prolongation of a map is unique, then $H\circ\hat F = \Id_{\hat\Omega_F}$, i.e., $H=\hat F^{-1}$ on $\hat F(\hat\Omega_F)\subset\hat\Omega_{F^{-1}}$ and thus $\hat F$ is a diffeomorphism.
	By symmetry, we get also 
	$H(\hat\Omega_{F^{-1}})\subset\hat\Omega_F$ and $H^{-1}=\hat F$.
	In particular, $\hat F(\hat\Omega_F)=\hat\Omega_{F^{-1}}$.
\end{proof}

\section{De-prolongation of Contact Maps: B\"acklund Theorem}\label{sec616fa813}

\subsection{The de-prolongation theorems}
This section is devoted to the de-pro\-lon\-ga\-tion Theorem~\ref{thm5ebc31f5}, which extends the well known B\"acklund Theorem to the jet spaces discussed here.

Let $G$ be a stratified Lie group with stratified Lie algebra $\frk g=\bigoplus_{j=1}^s V_j$, and $W$ a vector space.
The first part of Theorem~\ref{thm5ebc31f5} follows by iterating the following result.

\begin{theorem}\label{thm5ebc3248}
	Assume $m\ge1$.
	Denote by $\pi_m:\Jet^{m+1}(G;W)\to \Jet^{m}(G;W)$ the projection along $\HorDer^{m+1}(\frk g;W)$.
	Let $\Omega\subset\Jet^{m+1}(G;W)$ be an open set, define $\Omega'=\pi_m(\Omega)$ and assume that $\pi_m^{-1}(q)\cap\Omega$ is connected for every $q\in\Omega'$.
	
	If $F:\Omega\to\Jet^{m+1}(G;W)$ is a contact diffeomorphism,
	then there is a contact diffeomorphism $F':\Omega'\to\Jet^{m}(G;W)$ such that $\pi_m\circ F=F'\circ\pi_m$, i.e., the following diagram commutes:
	\[
	\xymatrix{
	\Omega \ar[d]_{\pi_m}\ar[r]^{F} & \Jet^{m+1}(G;W) \ar[d]^{\pi_m} \\
	\Omega'\ar[r]_{F'} & \Jet^{m}(G;W)
	}
	\]
\end{theorem}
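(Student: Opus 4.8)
The plan is to prove that $F$ carries the fibers of $\pi_m$ into the fibers of $\pi_m$; the map $F'$ and all its properties then follow. Set $\cal V:=\ker d\pi_m$; explicitly $\cal V_{(a,A)}=\{0_{T_aG}\}\times\HorDer^{m+1}(\frk g;W)$, so $\cal V$ is a left-invariant, involutive subdistribution of $\scr H^{m+1}$ whose leaves are exactly the $\pi_m$-fibers (at $(e,0)$ it is the summand $\{0\}\times\HorDer^{m+1}(\frk g;W)$ of the first layer $\jet^{m+1}(\frk g;W)_1=V_1\oplus\HorDer^{m+1}(\frk g;W)$). Everything reduces to
\[
dF(\cal V)=\cal V\qquad\text{pointwise on }\Omega .
\]
Granting this, $F$ maps each connected integral manifold of $\cal V$ in $\Omega$ to one in $F(\Omega)$; since $\pi_m^{-1}(q)\cap\Omega$ is connected by hypothesis, it is the $\cal V$-leaf through any of its points, so $\pi_m\bigl(F(\pi_m^{-1}(q)\cap\Omega)\bigr)$ is a single point $F'(q)$. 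As $\pi_m$ is a surjective submersion along whose fibers $\pi_m\circ F$ is constant, $F'=\pi_m\circ F\circ(\text{local section of }\pi_m)$ is smooth with $\pi_m\circ F=F'\circ\pi_m$.

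The heart of the proof — and the step I expect to be the main obstacle — is $dF(\cal V)\subseteq\cal V$ (equality is then automatic, $F$ being a diffeomorphism). The idea is that $\cal V$ can be recovered from the contact distribution $\scr H^{m+1}$ alone, which $F$ preserves; and since $F$ is a diffeomorphism, $dF$ preserves everything built functorially from $\scr H^{m+1}$ — the iterated-bracket filtration $\scr H^{m+1}=\cal F_1\subseteq\cal F_2\subseteq\cdots\subseteq T\Jet^{m+1}(G;W)$ with $\cal F_{k+1}=\cal F_k+[\scr H^{m+1},\cal F_k]$, the Levi forms $\cal F_k\otimes\cal F_k\to T\Jet^{m+1}(G;W)/\cal F_k$, and their Cauchy characteristic spaces $\{X\in\cal F_k:[X,\Gamma(\cal F_k)]\subseteq\Gamma(\cal F_k)\}$. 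The computation is carried out in the graded Lie algebra $\jet^{m+1}(\frk g;W)$, where $\scr H^{m+1}$ is the first layer, the Levi form of $\scr H^{m+1}$ is the bracket $\bigl((v,A),(w,B)\bigr)\mapsto([v,w],\,w\rcontr A-v\rcontr B)$ landing in $\jet^{m+1}(\frk g;W)_2$, and $\cal V$ is the summand $\{0\}\times\HorDer^{m+1}(\frk g;W)$. When $\frk g$ is abelian one checks directly that $\cal V$ is the Cauchy characteristic space of $\cal F_2=\scr H^{m+1}+[\scr H^{m+1},\scr H^{m+1}]$, which as $m\ge1$ is a proper subbundle of $T\Jet^{m+1}(G;W)$; for general $\frk g$ one must supplement this with the further (still left-invariant, bracket-defined) data carried by the layers $V_2,\dots,V_s$. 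The arithmetic engine throughout is that for every $v\in V_1\setminus\{0\}$ the right-contraction $v\rcontr\colon\HorDer^{k}(\frk g;W)\to\HorDer^{k-1}(\frk g;W)$ is nonzero for each $1\le k\le m+1$: writing $A\in\HorDer^{k}(\frk g;W)$ as $A^{k}_{f,e}$, one has $v\rcontr A^{k}_{f,e}=A^{k-1}_{\tilde vf,e}$ by Lemma~\ref{lem5eabebe4}, which is nonzero for suitable $f$ since $\tilde v^{\,k}\ne0$ in the integral domain $\UniEnvAlg(G)$. For $m=0$ the relevant contraction $\HorDer^{0}(\frk g;W)=W\to 0$ degenerates and $\cal V$ is no longer singled out inside $\scr H^1$ by such bracket data — which is exactly why Theorem~\ref{thm5ebc31f5} imposes hypotheses (A) or (B) at level $0$; carrying the identification of $\cal V$ through uniformly for non-abelian $\frk g$ is the delicate part.

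It remains to check that $F'$ is a contact diffeomorphism. Contact: given $X\in\scr H^m_q$, pick $\hat p\in\pi_m^{-1}(q)\cap\Omega$ and, using $\Span\,d\pi_m(\scr H^{m+1})=\scr H^m$ (Lemma~\ref{lem5ead3862}), a lift $\hat X\in\scr H^{m+1}_{\hat p}$ with $d\pi_m(\hat X)=X$; then $dF'(X)=d\pi_m\bigl(dF(\hat X)\bigr)\in d\pi_m(\scr H^{m+1})\subseteq\scr H^m$, using $dF(\scr H^{m+1})\subseteq\scr H^{m+1}$ and $\pi_m\circ F=F'\circ\pi_m$. Diffeomorphism: $dF$ is everywhere invertible and preserves $\cal V$, hence induces an isomorphism on $T\Jet^{m+1}(G;W)/\cal V\cong\pi_m^*T\Jet^m(G;W)$, so $dF'$ is everywhere invertible and $F'$ is a local diffeomorphism; injectivity, hence that $F'$ is a diffeomorphism onto the open set $F'(\Omega')$, follows by running the same construction on the contact diffeomorphism $F^{-1}\colon F(\Omega)\to\Jet^{m+1}(G;W)$ (which also preserves $\cal V$) and noting that the resulting descended map is a two-sided inverse of $F'$. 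Finally $\pi_m\circ F=F'\circ\pi_m$ holds by construction.
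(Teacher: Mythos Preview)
Your overall architecture is exactly the paper's: identify $\cal V=\ker d\pi_m$ with the left-invariant subbundle generated by $\HorDer^{m+1}(\frk g;W)$, show $dF(\cal V)=\cal V$, descend $F$ to $F'$, and then check contactness. The descent and diffeomorphism arguments are fine.

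The genuine gap is precisely where you flag it: your proposed contact-invariant description of $\cal V$ as the Cauchy characteristic space of $\cal F_2$ fails for non-abelian $\frk g$, and you do not supply a replacement. Concretely, for $A\in\HorDer^{m+1}(\frk g;W)$ and $w\in V_2\subset\cal F_2$ one has $[A,w]=w\rcontr A\in\HorDer^{m-1}(\frk g;W)\subset\jet^{m+1}(\frk g;W)_3$, which is \emph{not} in $\cal F_2$; so typical elements of $\cal V$ are not Cauchy characteristics of $\cal F_2$. Your sentence ``one must supplement this with the further \dots\ data carried by the layers $V_2,\dots,V_s$'' is a promissory note, not an argument.

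The paper closes this gap with a criterion that only brackets with \emph{first-layer} fields, so $V_2,\dots,V_s$ never intervene: a horizontal $X$ lies in $\cal V$ iff $[X,[X,Y]]\in\Gamma(\cal F_2)$ for every horizontal $Y$ (equivalently, $\Pi_3([X,[X,Y]])=0$). This is Lemma~\ref{lem5ebc2bc6}, applied in $\Jet^{m+1}$ where the hypothesis $m+1\ge2$ holds. One direction is immediate; for the converse, with $Y=B\in\HorDer^{m+1}(\frk g;W)$ one computes $\Pi_3([X,[X,B]])=v^X\rcontr(v^X\rcontr B)$, and your own ``arithmetic engine'' (nontriviality of $v\rcontr$ for $v\neq0$) forces $v^X=0$. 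This double-bracket criterion is manifestly preserved by contact diffeomorphisms, and it is exactly the missing invariant characterization you need.

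A smaller point: your contact argument for $F'$ asserts that every $X\in\scr H^m_q$ lifts to some $\hat X\in\scr H^{m+1}_{\hat p}$ at a \emph{single} $\hat p$. This is false in general; $d\pi_m|_{\hat p}(\scr H^{m+1}_{\hat p})$ is a proper subspace of $\scr H^m_q$ (see~\eqref{eq6177ee74}). One needs Lemma~\ref{lem5ebc4d52}: the span over $\hat p$ ranging in the (open piece of the) fiber recovers $\scr H^m_q$, and then your computation goes through with ``span'' in front.
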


The following theorem states that, with further assumptions, contact diffeomorphisms of $\Jet^1(G;W)$ are prolongations of contact diffeomorphisms of $\Jet^0(G;W)$.
This is a more precise restatement of the second part of Theorem~\ref{thm5ebc31f5}.

\begin{theorem}\label{thm606f2a10}
	Suppose that one of the following conditions is satisfied:
	\begin{enumerate}[label=(\Alph*)]
	\item\label{item5ebc765a}
	$\dim(W)>1$, or
	\item\label{item5ebc765b}
	$\dim(W)=1$ and for every $v\in V_1\setminus\{0\}$ there is $v'\in V_1$ with $[v,v']\neq0$.
	\end{enumerate}
	
	Denote by $\pi_0:\Jet^1(G;W)\to \Jet^0(G;W)$ the projection along $\HorDer^1(\frk g;W)$.
	Let $\Omega\subset\Jet^1(G;W)$ be an open set, define $\Omega'=\pi_0(\Omega)$ and assume that $\pi_0^{-1}(q)\cap\Omega$ is connected for every $q\in\Omega'$.
	
	If $F:\Omega\to\Jet^1(G;W)$ is a contact diffeomorphism,
	then there is a contact diffeomorphism $F':\Omega'\to\Jet^0(G;W)$ such that $\pi_0\circ F=F'\circ\pi_0$, i.e., the following diagram commutes:
	\begin{equation}\label{eq616fa757}
	\begin{gathered}
	\xymatrix{
	\Omega \ar[d]_{\pi_0}\ar[r]^{F} & \Jet^1(G;W) \ar[d]^{\pi_0} \\
	\Omega'\ar[r]_{F'} & \Jet^0(G;W)
	}
	\end{gathered}
	\end{equation}
\end{theorem}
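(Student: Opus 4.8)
The whole statement reduces to the single fact that the vertical direction of $\pi_0$ is intrinsic to the contact structure $\scr H^1$. Write $\scr V=\ker d\pi_0\subset T\Jet^1(G;W)$; it is a left-invariant, involutive distribution contained in $\scr H^1$, and in the left-invariant trivialisation its fibre is the subspace $\HorDer^1(\frk g;W)$ of the first layer $\jet^1(\frk g;W)_1$. I claim it suffices to prove $dF(\scr V)=\scr V$. Granting this, $F$ sends connected integral manifolds of $\scr V$ to connected integral manifolds of $\scr V$; the global leaves of $\scr V$ are the (connected) fibres of $\pi_0$, and by hypothesis $\pi_0^{-1}(q)\cap\Omega$ is connected and is therefore the leaf of $\scr V|_\Omega$ through each of its points, so $F$ maps each $\pi_0^{-1}(q)\cap\Omega$ into one fibre $\pi_0^{-1}(q')$. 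Hence $\pi_0\circ F$ is constant along the fibres of the submersion $\pi_0|_\Omega$ and factors as $\pi_0\circ F=F'\circ\pi_0$ for a unique smooth $F'\colon\Omega'\to\Jet^0(G;W)$; the same reasoning applied to the contact diffeomorphism $F^{-1}$ produces a smooth two-sided inverse for $F'$; and $F'$ is a contact map because $dF(\scr H^1)=\scr H^1$, $d\pi_0(\scr H^1)\subset\scr H^0$, and, by Lemma~\ref{lem5ebc4d52} with $m=0$, the images $d\pi_0(\scr H^1_{\hat p})$ already span $\scr H^0$ as $\hat p$ runs over any open subset of a fibre of $\pi_0$. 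This is the same bookkeeping as in Theorem~\ref{thm5ebc3248}.

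\textbf{Reduction to the symbol algebra.} Since $\Jet^1(G;W)$ is a stratified group whose first layer is $\scr H^1=\jet^1(\frk g;W)_1$, and since $F$ is a contact diffeomorphism, $F$-related vector fields have $F$-related brackets; hence $dF$ preserves $\scr H^1$ and the left-invariant distribution $\scr D^{(2)}=\scr H^1+[\scr H^1,\scr H^1]$, whose fibre over $(e_G,0)$ equals $\jet^1(\frk g;W)_1\oplus\jet^1(\frk g;W)_2$ (the $\HorDer^0$-component of the brackets being supplied by Lemma~\ref{lem5ead3862}). Reading $dF$ through the left-invariant trivialisations, it induces at each point a linear automorphism $\phi_1$ of $\frk q_1:=\jet^1(\frk g;W)_1$ and a linear automorphism $\phi_2$ of $\frk q_2:=\jet^1(\frk g;W)_2$ (acting on $\scr D^{(2)}/\scr H^1\cong\frk q_2$), intertwined by the Levi form $\Lambda\colon\frk q_1\times\frk q_1\to\frk q_2$, $\Lambda(x,y)=[x,y]$, i.e., $\phi_2\circ\Lambda=\Lambda\circ(\phi_1\times\phi_1)$. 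As $\scr V$ corresponds to $\HorDer^1(\frk g;W)\subset\frk q_1$ and $dF$ acts on $\scr H^1$ as $\phi_1$, it is enough to show $\phi_1$ fixes $\HorDer^1(\frk g;W)$; and since $\phi_1$ (together with $\phi_2$) sends $\Lambda$-isotropic subspaces to $\Lambda$-isotropic subspaces of the same dimension and satisfies $[\phi_1 L,\frk q_1]=\phi_2[L,\frk q_1]$, it suffices to characterise $\HorDer^1(\frk g;W)$ inside $\frk q_1$ using only $\Lambda$.

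\textbf{The algebraic characterisation (where \ref{item5ebc765a} or \ref{item5ebc765b} enters).} With respect to $\frk q_1=V_1\oplus\HorDer^1(\frk g;W)$ and $\frk q_2=V_2\oplus W$ one has $\Lambda\bigl((v,B),(v',B')\bigr)=\bigl([v,v']_{\frk g},\,B(v')-B'(v)\bigr)$; so $\HorDer^1(\frk g;W)$ is $\Lambda$-isotropic and, by Lemma~\ref{lem5ead3862}, $[\HorDer^1(\frk g;W),\frk q_1]$ has dimension $\dim W$. I claim $\HorDer^1(\frk g;W)$ is the unique $\Lambda$-isotropic subspace of $\frk q_1$ of maximal dimension among those $L$ with $\dim[L,\frk q_1]\le\dim W$. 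Indeed, let $L$ be $\Lambda$-isotropic and let $U\subset V_1$ be the image of $L$ under the projection $\frk q_1\to V_1$; isotropy of the $W$-valued component of $\Lambda$ forces every $B$ with $(0,B)\in L$ to vanish on $U$, whence
\[
\dim L\;\le\;\dim U+(\dim V_1-\dim U)\dim W\;=\;(\dim V_1)(\dim W)-(\dim W-1)\dim U .
\]
If $\dim W\ge 2$ (case~\ref{item5ebc765a}) this bounds $\dim L$ by $\dim\HorDer^1(\frk g;W)$, with equality only if $U=0$, i.e., $L=\HorDer^1(\frk g;W)$. If $\dim W=1$ (case~\ref{item5ebc765b}) the bound gives only $\dim L\le\dim V_1=\dim\HorDer^1(\frk g;W)$, with equality possibly when $U\ne 0$; but then, picking $0\ne v_\ast\in U$ and $(v_\ast,B_\ast)\in L$, the space $[L,\frk q_1]$ contains both $\{[v_\ast,v']_{\frk g}:v'\in V_1\}$ and the whole summand $W$, and by~\ref{item5ebc765b} the first set is nonzero, so $\dim[L,\frk q_1]>1=\dim W$ and such $L$ violate the constraint. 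In either case the admissible $\Lambda$-isotropic subspaces of maximal dimension reduce to $\HorDer^1(\frk g;W)$ alone; hence $\phi_1$ fixes it and $dF(\scr V)=\scr V$.

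\textbf{Main obstacle.} The delicate case is $\dim W=1$ of the previous step: there $\HorDer^1(\frk g;W)$, $V_1$, and many other Lagrangians of the symplectic form $(v,B),(v',B')\mapsto B(v')-B'(v)$ all have the same dimension, so $\HorDer^1(\frk g;W)$ can be isolated only by the finer invariant $\dim[L,\frk q_1]$, whose behaviour is governed exactly by the bracket condition~\ref{item5ebc765b}; that this is sharp---the Legendre-type contact transformation appearing when neither~\ref{item5ebc765a} nor~\ref{item5ebc765b} holds---is discussed in Remark~\ref{rem616fabbb}. All remaining steps are pointwise or take place inside the fixed open set $\Omega$, so no change is needed to treat $F$ defined only on $\Omega$, beyond the connectedness of $\pi_0^{-1}(q)\cap\Omega$ already used.
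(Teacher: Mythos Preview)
Your proof is correct, and the overall architecture---show $dF(\scr V)=\scr V$ pointwise by a Levi-form argument, then descend and use Lemma~\ref{lem5ebc4d52}---matches the paper's. Case~\ref{item5ebc765a} is essentially identical to the paper's Proposition~\ref{prop5ebc819c}: your ``$\Lambda$-isotropic subspace of $\frk q_1$ of dimension $\dim V_1\cdot\dim W$'' is exactly the paper's ``abelian subalgebra of $\jet^1(\frk g;W)_1$ of dimension $\dim V_1\cdot\dim W$'', and your dimension count via the projection $U\subset V_1$ is the same computation (cf.~Corollary~\ref{cor5ebc8112} for the reduction to abelian subalgebras).

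Case~\ref{item5ebc765b} is where you take a genuinely different route. The paper works vector-by-vector: it introduces the centraliser dimension $\delta(X)=\dim\{Y\in\frk q_1:[X,Y]=0\}$, shows $\delta(0,\alpha)=2\dim V_1-1$ for $\alpha\neq 0$ but $\delta(v,\alpha)\le 2\dim V_1-2$ for $v\neq0$ (this last inequality using condition~\ref{item5ebc765b}), and then proves $\delta$ is preserved by $F_*$ via Lemma~\ref{lem5ebc8077}. You instead single out $\HorDer^1(\frk g;W)$ as a whole by a single invariant of isotropic subspaces, namely $\dim[L,\frk q_1]$, and your observation that $\dim[L,\frk q_1]>1$ once $U\neq0$ is exactly where~\ref{item5ebc765b} enters. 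Both invariants are preserved by the graded symbol $(\phi_1,\phi_2)$ for the same reason (functoriality of brackets under contact diffeomorphisms), so the two arguments are equally short. Your approach has the pleasant feature of treating~\ref{item5ebc765a} and~\ref{item5ebc765b} with one and the same characterisation; the paper's approach has the advantage of being completely explicit at the level of individual vectors, which makes the failure described in Remark~\ref{rem616fabbb} (a centre in $V_1$ allowing a Legendre-type swap) immediately visible.
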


\begin{remark}\label{rem616fabbb}
	Theorem~\ref{thm606f2a10} is sharp, that is, {\it if both conditions~\ref{item5ebc765a} and~\ref{item5ebc765b} are violated, then there exists a contact diffeomorphism $F:\Jet^1(G;W)\to\Jet^1(G;W)$ that is not the lift of a contact diffeomorphism $\Jet^0(G;W)\to\Jet^0(G;W)$.}
	
	The construction of such a contact map $F$ is the following.
	If both conditions~\ref{item5ebc765a} and~\ref{item5ebc765b} are violated, then $W=\R$ and there is $\hat v\in V_1$ such that $[\hat v,V_1]=\{0\}$.
	It follows that $\frk g$ is the direct product of a stratified Lie algebra $\frk g'$ and $\R$ (take $V_1'$ such that $V_1=V_1'\oplus\R\hat v$ and $\frk g'$ the Lie span of $V_1'$).
	Then
	\[
	\jet^1(\frk g;W) \simeq (\frk g'\times\R) \oplus \R \oplus ((\frk g')^*\times\R^*) ,
	\]
	with Lie bracket
	\[
	[ ((v,x),z,(\alpha,y)) , ((\bar v,\bar x),\bar z,(\bar \alpha,\bar y)) ]
	 = (([v,\bar v],0) , \alpha(\bar v) - \bar \alpha(v) + (\bar y x - y\bar x) , (0,0)) .
	\]
	Define
	\[
	\phi((v,x),z,(\alpha,y)) = ((v,-y),z,(\alpha,x)) .
	\]
	Then $\phi$ is a Lie algebra automorphism of $\jet^1(\frk g;W)$. Indeed,
	\begin{align*}
	[ \phi((v,x),z,(\alpha,y)) , &\phi((\bar v,\bar x),\bar z,(\bar \alpha,\bar y)) ] \\
	&= [ ((v,-y),z,(\alpha,x)) , ((\bar v,-\bar y),\bar z,(\bar \alpha,\bar x)) ] \\
	&= (([v,\bar v],0) , \alpha(\bar v) - \bar \alpha(v) + (- y\bar x + \bar y x) , (0,0)) \\
	&= [ ((v,x),z,(\alpha,y)) , ((\bar v,\bar x),\bar z,(\bar \alpha,\bar y)) ] .
	\end{align*}
	
	The map $F:\Jet^1(G;W)\to\Jet^1(G;W)$ is the unique Lie group automorphism with $F_*=\phi$.
	Since $\phi(\jet^1(\frk g;W)_1)=\jet^1(\frk g;W)_1$, $F$ is a contact diffeomorphism.
	By construction, there is no $F'$ that makes the diagram~\eqref{eq616fa757} commute, not even locally.
\end{remark}

\subsection{Characteristic vector fields and proof of Theorem~\ref{thm5ebc3248}}
\label{sec616faee7}

De-prolongation results are based on the existence of a particular type of characteristic vector field.

For an open set $\Omega \subset \Jet^m(G;W)$, we let $T\Jet^m_\Omega=\cup_{p \in \Omega} T_p\Jet^m(G;W)$ and $\scr H^m_\Omega = \cup_{p \in \Omega} \scr H^m_p$. Starting with $L^1_\Omega=\Gamma(\scr H^m_\Omega)$, we define a filtration of $\Gamma(T\Jet^m_\Omega)$ inductively buy setting $L^{i+1}_\Omega=L^i_\Omega+[L^1_\Omega,L^i_\Omega]$ for $i=1,\dots,s-1$. If $F:\Omega \to \Jet^m(G;W)$ is a smooth contact diffeomorphism, then $F_*L^i_\Omega=L^i_{f(\Omega)}$ since $F_*L^1_\Omega=L^1_{F(\Omega)}$. A Cauchy characteristic of order $i$ over $\Omega$ , is a vector field $X \in \Gamma(L^i_\Omega)$ such that $[X, \Gamma(L^i_\Omega)] \subset  \Gamma(L^i_\Omega)$. The set $\mathcal{C}^i_\Omega$ of all Cauchy characteristics of order $i$ over $\Omega$ is a Lie algebra thanks to the Jacobi identity. If $F:\Omega \to \Jet^m(G;W)$ is a smooth contact diffeomorphism then $F_*\mathcal{C}^i(\Omega) \subseteq \mathcal{C}^i(F(\Omega))$ since 
\begin{align*}
F_*[X,\Gamma(L^i_\Omega)]&=[F_*X,F_*\Gamma(L^i_\Omega)]\\
&=[F_*X,\Gamma(L^i_{F(\Omega)})] \subset  F_*\Gamma(L^i_{\Omega})  = \Gamma(L^i_{F(\Omega)}).
\end{align*}

The particular notion of characteristic we use here, is a vector field $\tilde X \in \Gamma(L^1_\Omega)$ such $ [\tilde X,[\tilde X, \tilde Y]] \subset  \Gamma(L^2_\Omega)$ for all $\tilde Y \in \Gamma(L^1_\Omega)$. 
The characteristic property of these vector fields is preserved by contact transformation. 
Indeed, if $F:\Omega \to \Jet^m(G;W)$ is a smooth contact diffeomorphism, then
\begin{align*}
	[F_*\tilde X,[F_*\tilde X, \Gamma(L^1_{F(\Omega)})]]
	&=[F_*\tilde X,[F_*\tilde X, F_*\Gamma(L^1_\Omega)]]\\
	&= F_*[\tilde X,[\tilde X, \Gamma(L^1_\Omega)]] 
	\subset  F_*\Gamma(L^2_{\Omega})  
	= \Gamma(L^2_{F(\Omega)}).
\end{align*}

This observation together with the characterisation of characteristic vector fields proved in Lemma~\ref{lem5ebc2bc6} below, allow us to prove the de-prolongation result in Theorem~\ref{thm5ebc3248}. 

Once again, we will heavily use the conventions described in Section~\ref{sec616e79a0} to represent vector fields on Lie groups.
In particular, notice the difference between ``$[X_p,Y_p]$'' and ``$[X,Y]_p$''.

\begin{lemma}\label{lem5ebc2bc6}
	Assume $m\ge2$.
	Denote by $\Pi_3$ the projection $\jet^m(\frk g;W)\to \jet^m(\frk g;W)_3$ given by the stratification of $\jet^m(\frk g;W)$.
	Let $X:\Jet^m(G;W)\to\jet^m(\frk g;W)_1$ be a horizontal vector field and write $X=v^X+A^X$, where $v^X:\Jet^m(G;W)\to V_1$ and $A^X:\Jet^m(G;W)\to\HorDer^m(\frk g;W)$.
	
	Then the following are equivalent:
	\begin{enumerate}[label=(\roman*)]
	\item\label{item5ebc298a}
	$v^X=0$;
	\item\label{item5ebc298b}
	$X$ is characteristic, i.e., $\Pi_3([X,[X,Y]])=0$ for every horizontal vector field $Y:\Jet^m(G;W)\to\jet^m(\frk g;W)_1$.
	\end{enumerate}
\end{lemma}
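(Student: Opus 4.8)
\emph{Proof strategy.} The plan is to compute $\Pi_3([X,[X,Y]])$ in the left-invariant trivialization, discard every contribution that cannot land in the third layer $\jet^m(\frk g;W)_3=V_3\oplus\HorDer^{m-2}(\frk g;W)$, and reduce the statement to an algebraic identity there. Throughout I assume, as is tacit in the statement, that $W\neq\{0\}$.

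First I recall the bracket calculus of Section~\ref{sec616e79a0}: if a vector field $U$ on $\Jet^m(G;W)$ is encoded by the smooth function $\mathbf U:\Jet^m(G;W)\to\jet^m(\frk g;W)$ with $U=\mathbf U^{\widetilde{~}}$, then the Lie bracket of two such fields is encoded by $p\mapsto \widetilde U(\mathbf V)(p)-\widetilde V(\mathbf U)(p)+[\mathbf U(p),\mathbf V(p)]$, where $\widetilde U(\mathbf V)$ is the derivative of the $\jet^m(\frk g;W)$-valued function $\mathbf V$ along $U$ and $[\mathbf U(p),\mathbf V(p)]$ is the bracket~\eqref{eq5ebb173d} of $\jet^m(\frk g;W)$; this is~\eqref{eq616e7ae8} expanded in a basis. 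The key observation is a grading count: the derivative terms $\widetilde U(\mathbf V)$ take values in whatever linear subspace of $\jet^m(\frk g;W)$ the function $\mathbf V$ does, whereas $[\jet^m(\frk g;W)_1,\jet^m(\frk g;W)_1]\subseteq\jet^m(\frk g;W)_2$ and $[\jet^m(\frk g;W)_1,\jet^m(\frk g;W)_2]\subseteq\jet^m(\frk g;W)_3$. Hence, writing $\mathbf X=v^X+A^X$, $\mathbf Y=v^Y+A^Y$ (both $\jet^m(\frk g;W)_1$-valued) and $\mathbf Z$ for the encoding of $[X,Y]$, the function $\mathbf Z$ takes values in $\jet^m(\frk g;W)_1\oplus\jet^m(\frk g;W)_2$ with second-layer part $p\mapsto[\mathbf X(p),\mathbf Y(p)]$, while in $[X,[X,Y]]=\widetilde X(\mathbf Z)-\widetilde Z(\mathbf X)+[\mathbf X,\mathbf Z]$ the two derivative terms take values in $\jet^m(\frk g;W)_1\oplus\jet^m(\frk g;W)_2$ and so never reach the third layer. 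Since the third-layer part of $[\mathbf X(p),\mathbf Z(p)]$ comes only from pairing $\mathbf X(p)$ with the second-layer part of $\mathbf Z(p)$, we get
\[
\Pi_3([X,[X,Y]])(p)=\Pi_3\big([\mathbf X(p),\mathbf Z(p)]\big)=\big[\mathbf X(p),\,[\mathbf X(p),\mathbf Y(p)]\big],
\]
and substituting~\eqref{eq5ebb173d} twice this equals, at every $p$,
\[
\Big([v^X,[v^X,v^Y]]_{\frk g},\;\;[v^X,v^Y]\rcontr A^X-v^X\rcontr(v^Y\rcontr A^X)+v^X\rcontr(v^X\rcontr A^Y)\Big),
\]
all entries evaluated at $p$.

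The implication \ref{item5ebc298a}$\THEN$\ref{item5ebc298b} is then immediate: if $v^X\equiv0$, every summand on the right vanishes for every $Y$. For \ref{item5ebc298b}$\THEN$\ref{item5ebc298a} I would feed into the characteristic condition the constant vertical fields $Y=(0,B)^{\widetilde{~}}$ with $B\in\HorDer^m(\frk g;W)$, i.e. $v^Y=0$ and $A^Y\equiv B$; the displayed formula collapses to
\[
\Pi_3([X,[X,Y]])(p)=\big(0,\;v^X(p)\rcontr(v^X(p)\rcontr B)\big),
\]
so if $X$ is characteristic then $v^X(p)\rcontr(v^X(p)\rcontr B)=0$ for all $p\in\Jet^m(G;W)$ and all $B\in\HorDer^m(\frk g;W)$. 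It then suffices to show that, for any $v\in V_1\setminus\{0\}$, the linear map $B\mapsto v\rcontr(v\rcontr B)$ on $\HorDer^m(\frk g;W)$ is not identically zero. Pick $w_0\in W\setminus\{0\}$ and $\xi\in\frk g^*$ with $\xi(v)=1$, and set $f(q)=\langle\xi,\log q\rangle^m\,w_0$; then $f(\exp(tv))=t^m w_0$, so $B:=A^m_{f,e}\in\HorDer^m(\frk g;W)$ satisfies $B(v,\dots,v)=m!\,w_0\neq0$, whence $v\rcontr(v\rcontr B)$ evaluated at $(v,\dots,v)$ equals $m!\,w_0\neq0$. Using this $B$ at a point where $v^X$ were nonzero contradicts the vanishing above, so $v^X\equiv0$.

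The only delicate step is the grading bookkeeping — checking that neither $\widetilde X(\mathbf Z)$ nor $\widetilde Z(\mathbf X)$ contributes to the third layer, and that the pointwise-bracket part of $[X,[X,Y]]$ in $\jet^m(\frk g;W)_3$ is exactly the iterated Lie-algebra bracket. This is also precisely where the hypothesis $m\ge2$ is used: for $m\ge2$ the layer $\jet^m(\frk g;W)_3$ carries the nonzero summand $\HorDer^{m-2}(\frk g;W)$ on which the test with vertical fields bites, whereas for $m=1$ that summand is $\{0\}$ and the conclusion genuinely fails (for instance whenever $\frk g$ has step $\le2$). The remaining computations are the two applications of~\eqref{eq5ebb173d} and the elementary realization of a horizontal derivative used above (cf.\ Corollary~\ref{cor5eafe914}).
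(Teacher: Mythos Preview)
Your proof is correct and follows essentially the same approach as the paper: reduce $\Pi_3([X,[X,Y]])_p$ to the pointwise iterated Lie-algebra bracket $[\mathbf X(p),[\mathbf X(p),\mathbf Y(p)]]$ via the grading (this is exactly the step the paper justifies by invoking~\eqref{eq616e7ae8}), then test against constant vertical fields $Y=(0,B)$ for the nontrivial implication. You supply more detail than the paper in two places --- the explicit grading bookkeeping showing the derivative terms cannot reach layer~$3$, and the explicit construction of $B=A^m_{f,e}$ with $B(v,\dots,v)\neq0$ to witness that $v\rcontr(v\rcontr\,\cdot\,)$ is nonzero on $\HorDer^m(\frk g;W)$ --- but these are elaborations of the same argument, not a different route.
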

\begin{proof}
	$\ref{item5ebc298a}\THEN\ref{item5ebc298b}$
	If $Y:\Jet(G;W)\to\jet^m(G;W)_1$ is a horizontal vector field and we write $Y=v^Y+A^Y$,
	then for every $p\in \Jet(G;W)$,
	\begin{equation}\label{eqcoso}
	\begin{aligned}
	\Pi_3([X,[X,Y]]_p)
	&\overset{(*)}= \Pi_3([X_p,[X_p,Y_p]]) \\
	&= [A^X_p,[A^X_p,v^Y_p]] + [A^X_p,[A^X_p,A^Y_p]]
	= 0 ,
	\end{aligned}
	\end{equation}
	 where the identity $(*)$ is justified by writing $X$ and $Y$ in a basis of $\jet^m(G;W)_1$ and then applying~\eqref{eq616e7ae8}.
	
	$\ref{item5ebc298b}\THEN\ref{item5ebc298a}$
	Let $B\in\HorDer^m(\frk g;W)$ and let $Y=A^Y \equiv B$ be a constant vector field.
	Then, for every $p\in G$,
	\[
	0
	= \Pi_3([X,[X,Y]]_p) 
	= \Pi_3([X_p,[X_p,Y_p]]) 
	= [v^X,[v^X,B]] 
	= v^X\rcontr(v^X\rcontr B).
	\]
	Since $B$ is arbitrary in $\HorDer^m(\frk g;W)$ and $m\ge2$, we conclude that $v^X=0$.
\end{proof}

\begin{proof}[Proof of Theorem~\ref{thm5ebc3248}]
	Let $\scr V\subset T\Jet^{m+1}(G;W)$ be the left-invariant vector bundle defined by $\HorDer^{m+1}(\frk g;W)$.
	
	We claim that $dF_p(\scr V)=\scr V_{F(p)}$ for every $p\in \Omega$.
	Let $A_1,\dots,A_\ell$ be a basis of $\HorDer^{m+1}(\frk g;W)$ and consider the corresponding left-invariant vector fields $\tilde A_1,\dots,\tilde A_\ell$, which form a frame for $\scr V$.
	Fix $k\in\{1,\dots,\ell\}$.
	By Lemma~\ref{lem5ebc2bc6}, $\tilde A_k$ is a characteristic vector field.
	Since $F$ is contact, $F_*\tilde A_k$ is also a characteristic vector field.
	By Lemma~\ref{lem5ebc2bc6} that $F_*\tilde A_k|_{F(p)}\in\scr V_{F(p)}$ for every $p\in\Omega$.
	We have that $d F_p (\tilde A_1|_p),\dots,d F_p(\tilde A_\ell|_p)$ are linearly independent and belong to $\scr V_{F(p)}$ and thus they form a basis of this vector space.
	The claim is thus proven.

	Finally, notice that the fibers $\pi_m^{-1}(q)$, for $q\in\Jet^m(G;W)$, are the integral manifolds of $\scr V$.
	We conclude that there exists a smooth map $F'$ so that the above diagram commutes.
	The fact that $F'$ is contact map follows from Lemma~\ref{lem5ebc4d52}.
\end{proof}

\subsection{Proof of Theorem~\ref{thm606f2a10} with condition~\ref{item5ebc765a}}

\begin{lemma}\label{lem5ebc8077}
	Let $X,Y:\Jet^m(G;W)\to\jet^m(G;W)_1$ be horizontal vector fields and $p\in\Jet^m(G;W)$.
	The commutator $[X,Y]_p$ is  horizontal if and only if $[X_p,Y_p]=0$.
\end{lemma}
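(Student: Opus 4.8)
The plan is to compute the commutator $[X,Y]_p$ explicitly using the formula~\eqref{eq616e7ae8} for brackets of non-constant vector fields on a Lie group, and then isolate which part of the resulting element of $\jet^m(\frk g;W)$ can fail to lie in the first layer. Write $X = v^X + A^X$ and $Y = v^Y + A^Y$ where $v^X, v^Y : \Jet^m(G;W)\to V_1$ and $A^X, A^Y : \Jet^m(G;W)\to\HorDer^m(\frk g;W)$, as in Lemma~\ref{lem5ebc2bc6}. Fix a basis of $\jet^m(\frk g;W)_1 = V_1\oplus\HorDer^m(\frk g;W)$, expand $X$ and $Y$ in this basis with smooth-function coefficients, and apply~\eqref{eq616e7ae8} term by term. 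The Leibniz terms $f\cdot(\tilde X g)\cdot Y$ etc.\ are always valued in $\jet^m(\frk g;W)_1$, so they contribute nothing to the obstruction; the only contribution to layers of degree $\ge 2$ comes from the pointwise bracket $[X_p, Y_p]$ taken inside $\jet^m(\frk g;W)$.

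So the computation reduces to evaluating $[X_p, Y_p]$ in the Lie algebra $\jet^m(\frk g;W)$ using~\eqref{eq5ebb173d}: for $(v,A),(w,B)\in\jet^m(\frk g;W)_1$ we have $[(v,A),(w,B)] = ([v,w],\, w\rcontr A - v\rcontr B)$. Here $v,w\in V_1$ and $A,B\in\HorDer^m(\frk g;W)$. By part~\ref{item5eabe6df} of Proposition~\ref{prop05301400}, $w\rcontr A\in\HorDer^{m-1}(\frk g;W)$ and $v\rcontr B\in\HorDer^{m-1}(\frk g;W)$, which is the degree-$2$ layer component $\HorDer^{m+1-2}(\frk g;W)$ of $\jet^m(\frk g;W)_2$; meanwhile $[v,w]\in V_2$, also in the degree-$2$ layer. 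Thus $[X_p,Y_p]$ is already valued in $\jet^m(\frk g;W)_1\oplus\jet^m(\frk g;W)_2$, and its degree-$\ge 2$ part — the part that prevents $[X,Y]_p$ from being horizontal — is exactly $[X_p,Y_p]$ itself projected to $\jet^m(\frk g;W)_2$. Combining with the previous paragraph: $[X,Y]_p$ is horizontal iff the $\jet^m(\frk g;W)_2$-component of $[X_p,Y_p]$ vanishes, iff (since there is no component in layers $\ge 3$) $[X_p,Y_p]=0$.

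The only subtlety is bookkeeping the Leibniz terms from~\eqref{eq616e7ae8} carefully enough to confirm they never leave $\jet^m(\frk g;W)_1$: each such term is a smooth scalar function times one of the basis vectors of $\jet^m(\frk g;W)_1$, hence horizontal, so they are irrelevant to the equivalence. I expect this to be entirely routine; the conceptual content is just that the only way to reach layer $2$ from brackets of layer-$1$ vector fields is through the pointwise Lie bracket, which is precisely what the convention ``$[X_p,Y_p]$ versus $[X,Y]_p$'' emphasized just before the lemma statement is designed to track. Hence no real obstacle arises — the lemma is a direct structural consequence of~\eqref{eq616e7ae8}, \eqref{eq5ebb173d}, and the stratification described in Lemma~\ref{lem6177b00e}.
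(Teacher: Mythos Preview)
Your proposal is correct and follows essentially the same approach as the paper's proof, which is the single sentence ``Write $X$ and $Y$ in a basis of $\jet^m(G;W)_1$ and then apply~\eqref{eq616e7ae8}.'' Your write-up is just a more detailed unpacking of that sentence; the one cosmetic slip is that $[X_p,Y_p]$ lies entirely in $\jet^m(\frk g;W)_2$ (not merely in $\jet^m(\frk g;W)_1\oplus\jet^m(\frk g;W)_2$), but your conclusion already uses this correctly.
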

\begin{proof}
	Write $X$ and $Y$ in a basis of $\jet^m(G;W)_1$ and then apply \eqref{eq616e7ae8}.
\end{proof}

\begin{corollary}\label{cor5ebc8112}
	If $\scr V\subset \scr H^m$ is a horizontal involutive subbundle, then, for every $p\in \Jet^m(G;W)$, the space $d L_p^{-1}(\scr V_p)$ is an abelian subalgebra of $\jet^m(G;W)_1$.
\end{corollary}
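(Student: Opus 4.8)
The plan is to fix $p\in\Jet^m(G;W)$, pick two arbitrary vectors $u,w\in dL_p^{-1}(\scr V_p)\subset\jet^m(\frk g;W)_1$, and show that $[u,w]=0$ in $\jet^m(\frk g;W)$. Since $dL_p^{-1}$ is linear and $\scr V_p$ is a linear subspace of $\scr H^m_p$, the set $dL_p^{-1}(\scr V_p)$ is automatically a linear subspace of the first layer $\jet^m(\frk g;W)_1$; once we know all such brackets vanish, it is a (trivially closed) abelian subalgebra, which is exactly the assertion.

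First I would use that $\scr V$ is a smooth subbundle to choose local smooth sections $X,Y\in\Gamma(\scr V)$ defined on a neighborhood of $p$ with $dL_p^{-1}(X(p))=u$ and $dL_p^{-1}(Y(p))=w$. Because $\scr V\subset\scr H^m$ and $\scr H^m$ is the left-invariant distribution with $\scr H^m_{(e,0)}=\jet^m(\frk g;W)_1$, these sections are precisely horizontal vector fields $X,Y$ valued in $\jet^m(\frk g;W)_1$ in the sense of Section~\ref{sec616e79a0}, so that Lemma~\ref{lem5ebc8077} is applicable to them. I would also note that the vector-field commutator is a local operation, so that $[X,Y]_p$ depends only on the germs of $X$ and $Y$ at $p$; in particular it is independent of the chosen local extensions.

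Next, since $\scr V$ is involutive, $[X,Y]\in\Gamma(\scr V)\subset\Gamma(\scr H^m)$; in particular the commutator vector field $[X,Y]$ is horizontal at $p$, i.e.\ $dL_p^{-1}([X,Y](p))=[X,Y]_p\in\jet^m(\frk g;W)_1$. By Lemma~\ref{lem5ebc8077}, the horizontality of $[X,Y]_p$ forces $[X_p,Y_p]=0$ in $\jet^m(\frk g;W)$, that is, $[u,w]=0$. As $u,w\in dL_p^{-1}(\scr V_p)$ were arbitrary, $dL_p^{-1}(\scr V_p)$ is an abelian subalgebra of $\jet^m(\frk g;W)_1$.

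I do not expect a genuine obstacle here: the entire substance is carried by Lemma~\ref{lem5ebc8077} combined with involutivity. The only minor points requiring attention are the standard fact that any vector in a fiber of a smooth subbundle extends to a local smooth section, and the locality of the bracket, which together guarantee that $[X_p,Y_p]$ is well defined and may be computed from any convenient local sections of $\scr V$ realizing $u$ and $w$ at $p$.
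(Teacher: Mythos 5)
Your proof is correct and follows exactly the route the paper intends: the corollary is stated without an explicit proof precisely because it is the combination of involutivity (giving horizontality of $[X,Y]_p$ for local sections $X,Y$ of $\scr V$) with Lemma~\ref{lem5ebc8077} (forcing $[X_p,Y_p]=0$). Your added remarks on extending fiber vectors to local sections and on the locality of the bracket are standard and correctly handled.
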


\begin{proposition}\label{prop5ebc819c}
	Assume condition~\ref{item5ebc765a}, that is, $\dim(W)>1$.
	If $R$ is an abelian subalgebra of $\jet^1(G;W)$ contained in $\jet^1(G;W)_1=V_1\times\HorDer^1(\frk g;W)$ such that $ \dim(R)=\dim(V_1)\cdot\dim(W)$, then $R=\HorDer^1(\frk g;W)$.
\end{proposition}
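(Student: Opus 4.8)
The plan is to understand the first layer $\jet^1(\frk g;W)_1=V_1\times\HorDer^1(\frk g;W)$ together with the bracket it inherits from~\eqref{eq5ebb173d}, and then to run a dimension count in which the hypothesis $\dim(W)>1$ is the decisive ingredient. First I would note that $\HorDer^1(\frk g;W)=\Lin(V_1;W)$, so $\dim\HorDer^1(\frk g;W)=\dim(V_1)\cdot\dim(W)=\dim(R)$; hence it is enough to prove the inclusion $R\subseteq\HorDer^1(\frk g;W)$, viewing $\HorDer^1(\frk g;W)$ as $\{0\}\times\HorDer^1(\frk g;W)$, since equality then follows by comparing dimensions. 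Next I would compute, for $(v,A),(w,B)\in V_1\times\HorDer^1(\frk g;W)$, that
\[
[(v,A),(w,B)]=([v,w],\,w\rcontr A-v\rcontr B)=([v,w],\,A(w)-B(v)),
\]
where $[v,w]\in V_2\subset\frk g$ and $A(w)-B(v)\in W=\HorDer^0(\frk g;W)$, using that the right contraction of a degree-one element of $\HorDer^1(\frk g;W)$ by a vector of $V_1$ is simply evaluation.

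I would then set $R_0:=R\cap\HorDer^1(\frk g;W)$ and let $V'\subseteq V_1$ be the image of $R$ under the projection $(v,A)\mapsto v$, so that $\dim(R)=\dim(R_0)+\dim(V')$. For any $(0,A)\in R_0$ and any $w\in V'$ there is an element of $R$ with $\frk g$-component $w$, and the vanishing of its bracket with $(0,A)$ forces $A(w)=0$; thus every element of $R_0$, regarded as a linear map $V_1\to W$, annihilates $V'$. Consequently $\dim(R_0)\le(\dim(V_1)-\dim(V'))\cdot\dim(W)$, and combining this with $\dim(R_0)+\dim(V')=\dim(V_1)\cdot\dim(W)$ gives $\dim(V')\,(1-\dim(W))\ge0$. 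Since $\dim(W)>1$, this forces $\dim(V')=0$, hence $R=R_0\subseteq\HorDer^1(\frk g;W)$, which is what we wanted.

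The computations involved are all elementary, so I do not expect a serious obstacle; the one point requiring care is the bracket formula on the first layer — in particular correctly identifying $w\rcontr A$ with $A(w)\in W$ for $A\in\HorDer^1(\frk g;W)$ — together with the observation that $A(w)=0$ is forced for every $w$ in the \emph{whole} projection $V'$, not merely for the $w$ paired with $A$ inside $R$. The strict inequality $\dim(W)>1$ is precisely what converts $\dim(V')(1-\dim(W))\ge0$ into $V'=0$; for $\dim(W)=1$ the count collapses and yields nothing, consistent with the sharpness discussed in Remark~\ref{rem616fabbb}.
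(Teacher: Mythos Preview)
Your proof is correct and follows essentially the same approach as the paper's: both set $a=\dim(V')$ and $b=\dim(R_0)$, use $a+b=\dim(R)$, exploit the abelian condition to show that every element of $R_0$ annihilates $V'$ (whence $b\le(\dim V_1-a)\dim W$), and conclude $a(1-\dim W)\ge0$. Your write-up is in fact slightly more streamlined, since the paper also records the lower bound $b\ge mn-m$ via the dimension formula for $R+\HorDer^1(\frk g;W)$, which is never used in the final deduction.
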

\begin{proof}
	Let $m=\dim(V_1)$ and $n=\dim(W)>1$, so that $\dim(R)=mn$.
	Notice that $\HorDer^1(\frk g;W)=\Lin(V_1;W)$ and thus $\dim(\HorDer^1(\frk g;W))=mn$.
	
	Let $\pi:V_1\times\HorDer^1(\frk g;W)\to V_1$ be the projection to the first factor and set $a=\dim(\pi(R))$ and $b = \dim(R\cap\HorDer^1(\frk g;W)) $.
	Notice that $\dim(R)=a+b$ since the restriction of $\pi$ to $R$ is a linear map with kernel of dimension $b$ and image of dimension $a$.
	Moreover,
	\begin{align*}
		b 
		= \dim(R\cap\HorDer^1(\frk g;W)) 
		&= mn + mn - \dim(R+\HorDer^1(\frk g;W)) \\
		&\ge mn + mn - (mn+m) = mn-m .
	\end{align*}
	
	Let $v_1,\dots,v_a\in V_1$ be a basis of $\pi(R)$ and define the map $\phi:\HorDer^1(\frk g;W)\to W^a$ as $\phi(\alpha)=(\alpha(v_1),\dots,\alpha(v_a))$. 
	Then $\phi$ is surjective and thus $\dim(\ker\phi)=mn-an$.
	
	Since $R$ is an abelian Lie algebra, 
	if $(0,\beta)\in R\cap\HorDer^1(\frk g;W)$ and $(v_j,\alpha)\in R$ for some $j$, then
	\[
	0 = [(0,\beta),(v_j,\alpha)] = \beta(v_j).
	\]
	Therefore, $\phi(\beta) = 0$, and so 
	$b\le \dim(\ker\phi) = mn-an$.
	All in all, we obtain
	\[
	mn = \dim(R) = a+b \le a + mn - an ,
	\quad\text{that is,}\quad
	0 \le a(1-n) .
	\]
	Since $n>1$, then $a$ must be zero.	
\end{proof}

Theorem~\ref{thm606f2a10} with condition~\ref{item5ebc765a} is a special case of the following proposition.

\begin{proof}[Proof of Theorem~\ref{thm606f2a10} with condition~\ref{item5ebc765a}]
	Assume condition~\ref{item5ebc765a}, i.e., $\dim(W)>1$.
	Let $\scr V\subset T\Jet^1(G;W)$ be the left-invariant vector bundle defined by $\HorDer^1(\frk g;W)$.
	We claim that $dF_p(\scr V)=\scr V_{F(p)}$ for every $p\in \Omega$.
	Indeed, $\scr V$ is a horizontal involutive  subbundle of rank $\dim(V_1)\cdot\dim(W)$, and so $d F(\scr V)$ is also a horizontal involutive  subbundle of the same rank since $F$ is a contact diffeomorphism.
	From Corollary~\ref{cor5ebc8112}, we obtain that for every $q$ in the image of $F$, $dF_q(\scr V)$ is the left translation of an abelian subalgebra $R_q$ in $\jet(\frk g;W)_1$.
	By Proposition~\ref{prop5ebc819c}, we get $R_q=\HorDer^1(\frk g;W)$, i.e., $d F_q(\scr V)=\scr V_q$, as claimed.
	
	From this point, the proof concludes as for Theorem~\ref{thm5ebc3248} in Section~\ref{sec616faee7}.
\end{proof}

\subsection{Proof of Theorem~\ref{thm606f2a10} with condition~\ref{item5ebc765b}}

\begin{proof}[Proof of Theorem~\ref{thm606f2a10} with condition~\ref{item5ebc765b}]
	Assume condition~\ref{item5ebc765b}, i.e., $\dim(W)=1$ and for every $v\in V_1\setminus\{0\}$ there is $v'\in V_1$ with $[v,v']\neq0$.
	It follows that $\HorDer(\frk g;W)=V_1^*$ and $\jet^1(\frk g;W)_1=V_1\oplus V_1^*$, while $\jet^1(\frk g;W)_2=V_2\oplus W$.
	For every $X\in\jet(\frk g;W)_1$, define 
	\[
	R_X=\{Y\in \jet(\frk g;W)_1:[X,Y]=0\}
	\text{ and }
	\delta(X) = \dim R_X .
	\]
	
	If $X=(0, \alpha)$, where  $\alpha\in V_1^*$ is nonzero and $Y=(w,\beta)\in\jet(\frk g;W)_1$, then
	$[X,Y] = \alpha(w)$ is zero if and only if $w\in\ker(\alpha)$.
	Therefore, 
	\begin{equation}\label{eq61dda759}
	\delta(0,\alpha)=2\dim(V_1)-1,
	\qquad\text{ whenever $\alpha\neq0$.}
	\end{equation}
	
	If $X=(v,\alpha)$ with $v\neq0$ and $Y=(w,\beta)$, then
	$[X,Y]=0$ if and only if $[v,w]=0$ and $\alpha(w)-\beta(v)=0$.
	Since ${\rm ad}_v$ is nontrivial on $V_1$ by assumption, the projection of $R_X$ to $V_1$ has dimension at most $\dim(V_1)-1$.
	Moreover, each fiber in $R_X$ of this projection has dimension $\dim(V_1)-1$, because $\alpha(w)-\beta(v)=0$ is a nontrivial linear equation in $\beta$, when $w$ is fixed.
	We conclude that 
	\begin{equation}\label{eq61dda774}
	\delta(v,\alpha) \le 2\dim(V_1)-2,
	\qquad\text{ whenever $v\neq0$.}
	\end{equation}
	
	Fix $p\in\Omega$.
	We claim that, for every $X\in\jet(\frk g;W)_1$,
	\begin{equation}\label{eq5ebc89a5}
	\delta(F_*\tilde X|_{F(p)}) = \delta(X) .
	\end{equation}
	
	If $Y\in \jet(\frk g;W)_1$ is such that $[X,Y]=0$,
	then $F_*([\tilde X,\tilde Y]) = [F_*\tilde X,F_*\tilde Y] = 0$ because $F$ is a diffeomorphism.
	Since $F$ is contact, then both $F_*\tilde X$ and $F_*\tilde Y$ are horizontal vector fields.
	Thus, we get from Lemma~\ref{lem5ebc8077} that $ [(F_*\tilde X)_{F(p)},(F_*\tilde Y)_{F(p)}] = 0$.
	Therefore, $F_*\tilde Y_{F(p)} \in R_{F_*\tilde X_{F(p)}}$.
	Since $F^{-1}$ is also a contact diffeomorphism we conclude \eqref{eq5ebc89a5}.
	
	From~\eqref{eq5ebc89a5} and the previous computation of $\delta(X)$ in~\eqref{eq61dda759} and~\eqref{eq61dda774}, we obtain that if $\alpha\in V_1^*$ then $F_*\tilde\alpha_{F(p)}\in V_1^*$ for every $p\in\Omega$.
	
	From this point, the proof concludes as for Theorem~\ref{thm5ebc3248} in Section~\ref{sec616faee7}.
\end{proof}

\section{Embedding of Carnot Groups into Jet Spaces}\label{sec61719991}
In this section we prove Theorem~\ref{thm6171c00e} and we compute its application to groups of step 2 and 3.

\subsection{Proof of Theorem~\ref{thm6171c00e}}
	For this proof, we will extensively use the identification of the group with its Lie algebra via the exponential map.
	More explicitly, if $\frk g$ is the Lie algebra of (a nilpotent, simply connected Lie group) $G$, then we define for $x,y\in\frk g$
	\[
	xy = \log(\exp(x)\exp(y)) ,
	\]
	which, via the BCH formula, has an explicit polynomial expression.
	If $f$ is a smooth function and $v\in\frk g$, then the corresponding left and right invariant vector fields take the form 
	\[
	\tilde vf(p) = \left.\frac{\dd}{\dd t}\right|_{t=0} f(p(tv))
	\quad\text{ and }\quad
	 v^\dagger f(p) = \left.\frac{\dd}{\dd t}\right|_{t=0} f((tv)p).
	\]
	
	Let $G$ be the stratified group from Theorem~\ref{thm6171c00e} with stratified Lie algebra $\frk g=\oplus_{j=1}^{s+1}V_j$.
	For $x\in\frk g$, we write $x_j=\Pi_j(x)\in V_j$ and $x'=\sum_{j\le s}x_j\in\frk g'$.
	The BCH formula gives a map $\eta:\frk g\times\frk g\to V_{s+1}$, which is a polynomial, such that
	\[
	\Pi_{s+1} (xy) = x_{s+1} + y_{s+1} + \eta(x,y) .
	\]
	The function $\eta$ has the following properties:
	\begin{enumerate}
	\item
	$\eta(x,y)$ depends only on $x'$ and $y'$;
	\item
	$\eta(\delta_\lambda x,\delta_\lambda y) = \lambda^{s+1} \eta(x,y)$;
	\item
	since $$\Pi_{s+1}((xy)z) = x_{s+1}+y_{s+1}+z_{s+1}+\eta(x,y)+\eta(xy,z)$$ and
	$$\Pi_{s+1}(x(yz)) = x_{s+1}+y_{s+1}+z_{s+1}+\eta(y,z)+\eta(x,yz),$$ we have
	\begin{equation}\label{eq6171bbce}
	\eta(xy,z) + \eta(x,y) = \eta(x,yz) + \eta(y,z).
	\end{equation}
	\end{enumerate}
	
	Define $\frk g'  = \oplus_{j=1}^{s}V_j$ with Lie brackets $[x,y]'=[x,y]-\Pi_{s+1}([x,y])$, i.e.,  $\frk g'\simeq \frk g/V_{s+1}$.
	We identify again the group $G'$ with $\frk g'$ endowed with the group operation given by the BCH formula.
	One can easily check that 
	\[
	(xy)' = (x'y')' = (x'y')_{\frk g'} ,
	\]
	where the last term is the group operation given by the BCH formula on $\frk g'$.
	
	By Theorem~\ref{thm6171c9a8}, the polynomial jet space $\Jet_\Poly^s(G';W)$ is equivalent to the jet space $\Jet^s(G';W)$ defined in Section~\ref{sec5ebc3ffe}.
	We will define an injective morphism of stratified Lie algebras $\phi:\frk g\to\jet_\Poly^s(G';V_{s+1})$.
	We consider linear maps $\phi$ of the following form: 
	For every $k\in\{1,\dots,s+1\}$ there is a linear map $\phi_k:V_k\to \Poly_{e_{G'}}^{s+1-k}(G';V_{s+1})$ such that for all $v\in V_k$
	\begin{equation}\label{eq6171be12}
	\phi(v) = v + \phi_k(v) \in V_k\oplus \Poly_{e_{G'}}^{s+1-k}(G';V_{s+1})
	= \jet_\Poly^s(\frk g';V_{s+1})_k .
	\end{equation}
	Notice that $\phi_{s+1}:V_{s+1}\to V_{s+1}$.
	 Any such map $\phi$ is a morphism of stratified Lie algebras 
	if and only if it is a Lie algebra morphism, because it already preserves the stratification.
	Furthermore, $\phi$ is a Lie algebra morphism
	if and only if for every $v\in V_i$ and $w\in V_j$, we have $[\phi(v),\phi(w)] = \phi([v,w])$, that is, by~\eqref{eq61781ab0},
	\begin{enumerate}
	\item
	if $i+j<s+1$,
	\begin{equation}\label{eq61716e63_a}
	w\rcontr\phi_i(v) - v\rcontr\phi_j(w) = \phi_{i+j}([v,w]) .
	\end{equation}
	\item
	if $i+j=s+1$, 
	\begin{equation}\label{eq61716e63_b}
	w\rcontr\phi_i(v) - v\rcontr\phi_j(w) = [v,w] + \phi_{s+1}([v,w]) .
	\end{equation}
	\end{enumerate}	
	
	 For $v\in V_k$, $1\le k\le s+1$, we define
	\begin{equation}\label{eq6171be3c}
	\phi_k(v) = ( v^\dagger_x \eta)(e,y) ,
	\end{equation}
	by which we mean that $\phi_k(v)$ is the polynomial in $y$ resulting from deriving $\eta$ in $x$ along the right invariant vector field $v^\dagger$ and evaluating at $x=e$.
	Similarly, we will also write $v^\dagger_y$ to denote the derivation in $y$.
	Notice that taking the right-invariant vector field $v^\dagger$ is pretentious, because
	\[
	( v^\dagger_x \eta)(e,y) 
	= \left.\frac{\dd}{\dd t}\right|_{t=0} \eta(tv,y) 
	= ( \tilde v_x \eta)(e,y) .
	\]
	Notice also that, since $\eta$ depends only on $y'$, $\phi_k(v)$ is actually a function on $G'$. 
	
	We claim that $( v^\dagger_x \eta)(e,y) \in \Poly^{s+1-k}_{e_{G'}}(G';V_{s+1})$ when $v\in V_k$.
	Indeed,
	\begin{align*}
	(v^\dagger_x \eta)(e,\delta_\lambda y) 
	&= \left. \frac{\dd}{\dd t}\right|_{t=0} \eta((tv),\delta_\lambda y) \\
	&= \left. \frac{\dd}{\dd t}\right|_{t=0} \eta(\delta_\lambda (\lambda^{-k}tv),\delta_\lambda y) \\
	&= \lambda^{s+1} \left. \frac{\dd}{\dd t}\right|_{t=0} \eta((\lambda^{-k}tv), y)\\
	&= \lambda^{s+1-k} \left. \frac{\dd}{\dd t}\right|_{t=0} \eta((tv), y)
	= \lambda^{s+1-k} (v^\dagger_x \eta)(e, y) .
	\end{align*}
	Therefore, $( v^\dagger_x \eta)(e,y) \in \Poly^{s+1-k}_{e_{G'}}(G';V_{s+1})$, and $\phi$ is therefore well defined.
	
	Next, we show that $\phi$ does in fact satisfy~\eqref{eq61716e63_a} and~\eqref{eq61716e63_b}.
	Let $v\in V_i$ and $w\in V_j$ for $1\le i,j\le s+1$.
	Then, using~\eqref{eq6171bbce},
	\begin{align*}
	w\rcontr\phi_i(v) (y)
	&= w^\dagger_y(v^\dagger_x \eta)(e,y) \\
	&= \left. \frac{\dd}{\dd t}\right|_{t=0} (v^\dagger_x \eta)(e,(tw)y) \\
	&= \left. \frac{\dd}{\dd t}\right|_{t=0}\left. \frac{\dd}{\dd r}\right|_{r=0} 
		\eta(rv,(tw)y) \\
	&= \left. \frac{\dd}{\dd t}\right|_{t=0}\left. \frac{\dd}{\dd r}\right|_{r=0}
		\left( \eta((rv)(tw),y) + (rv,tw) - \eta(tw,y) \right) \\
	&= (w^\dagger_x v^\dagger_x \eta)(e,y) 
		+ \left. \frac{\dd}{\dd t}\right|_{t=0}\left. \frac{\dd}{\dd r}\right|_{r=0}\eta(rv,tw) .
	\end{align*}
	Notice that 
	\[
	\eta(rv,tw) - \eta(tw,rv)
	= \Pi_{s+1}\left( \log(\exp(rv)\exp(tw)) - \log(\exp(tw)\exp(rv)) \right) 
	\]
	and that the BCH formula implies that
	\[
	\log(\exp(rv)\exp(tw)) = rv + tw + \frac{rt}{2} [v,w] + P(rv,tw)
	\]
	where $\left. \frac{\dd}{\dd t}\right|_{t=0}\left. \frac{\dd}{\dd r}\right|_{r=0} P(rv,tw) = 0$.
	Therefore
	\[
	\left. \frac{\dd}{\dd t}\right|_{t=0}\left. \frac{\dd}{\dd r}\right|_{r=0}
	\left( \eta(rv,tw) - \eta(tw,rv) \right)
	= \Pi_{s+1}([v,w]) .
	\]
	We conclude that
	\begin{align*}
	(w\rcontr\phi_i(v) - v\rcontr\phi_j(w))(y)
	&= (w^\dagger_x v^\dagger_x \eta)(e,y) - (v^\dagger_x w^\dagger_x \eta)(e,y) + \Pi_{s+1}([v,w]) \\
	&= (-[v^\dagger,w^\dagger]_x\eta)(e,y) + \Pi_{s+1}([v,w]) \\
	&= ([v,w]^{\dagger}_x\eta)(e,y) + \Pi_{s+1}([v,w]) ,
	\end{align*} 
	where we used the standard relation 
	$-[v^\dagger,w^\dagger] = [v,w]^{\dagger}$
	between right and left invariant vector fields.
	
	We have thus shown that the map $\phi$ defined as in~\eqref{eq6171be12} with~\eqref{eq6171be3c} is a morphism of stratified Lie algebras $\phi:\frk g\to\jet^s(\frk g';V_{s+1})$.
	The only missing property we need to conclude is that $\phi$ is injective.
	Since $\phi|_{V_k}$ is injective for every $k<s+1$ and since $\phi$ preserves the stratification, we only need to check that $\phi|_{V_{s+1}}$ is injective.
	But $\phi|_{V_{s+1}}$ is the identity, because if $v\in V_{s+1}$, then $(v^\dagger_x\eta)(e,y) = 0$, as $\eta$ does not depend on $x_{s+1}$.
\qed

\subsection{Example: step 2}
As an example, we will show how stratified groups of step 2 embed into the standard jet spaces over abelian groups.
Let $\frk g = V_1\oplus V_2$ be a stratified Lie algebra of step 2.
Then
\[
\jet^1(V_1;V_2) = V_1 \oplus V_2 \oplus \Lin(V_1;V_2)
\]
with Lie brackets
\[
[ (v_1,v_2,A) , (w_1,w_2,B) ]
= (0, A(w_1) - B(v_1) , 0 ) .
\]
Notice that
\[
\log(\exp(v)\exp(w)) = v+w+\frac12[v,w]
= (v_1+w_1) + (v_2+w_2 + \frac12[v_1,w_1] ) ,
\]
so that $\eta(v,w) = \frac12[v_1,w_1]$.
The embedding constructed in Theorem~\ref{thm6171c00e} is
\[
\phi(v_1,v_2) = (v_1,v_2,\frac12[v_1,\cdot]) .
\]

\subsection{Example: step 3}
Let $\frk g = V_1\oplus V_2\oplus V_3$ be a stratified Lie algebra of step~3.
Then $\frk g' = V_1 \oplus V_2$ and
\[
\jet^2(V_1\oplus V_2;V_3) = (V_1 \oplus V_2) \oplus V_3 \oplus \Lin(\frk g';V_3) \oplus \HorDer^2(\frk g';V_3)
\]
with Lie brackets
\begin{multline*}
[ (v_1,v_2;v_3;A^1;A^2) , (w_1,w_2;w_3;B^1;B^2) ] \\
= (0,[v_1,w_1]; A^1(w_1) - B^1(v_1) + A^2(w_2) - B^2(v_2) ; A^2(w_1) - B^2(v_1) ; 0 ) .
\end{multline*}
Notice that
\begin{multline*}
\log(\exp(v)\exp(w)) = v+w+\frac12[v,w] + \frac1{12} ([v,[v,w]] + [w,[w,v]]) \\
= (v_1+w_1) + (v_2+w_2 + \frac12[v_1,w_1] ) +\hfill \\
 + (v_3+w_3+\frac12([v_1,w_2]+[v_2,w_1]) + \frac1{12}([v_1,[v_1,w_1]] + [w_1,[w_1,v_1]])
\end{multline*}
so that 
\[
\eta(v,w) = \frac12([v_1,w_2]+[v_2,w_1]) + \frac1{12}([v_1,[v_1,w_1]] + [w_1,[w_1,v_1]] .
\]
Therefore,
\begin{align*}
\phi_1(v_1)(w)
	&= \left. \frac{\dd}{\dd t}\right|_{t=0} \eta(tv_1,w)
	= \frac12 [v_1,w_2] + \frac1{12} [w_1,[w_1,v_1] , \\
\phi_2(v_2)(w)
	&= \left. \frac{\dd}{\dd t}\right|_{t=0} \eta(tv_2,w)
	= \frac12 [v_2,w_1] , \\
\phi_3(v_3)(w)
	&= \left. \frac{\dd}{\dd t}\right|_{t=0} \eta(tv_3,w) 
	= 0 \text{ (as expected).}
\end{align*}
We obtain
\[
\phi(v_1,v_2,v_3) =
( v_1,v_2;v_3;\frac12 [v_2,\Pi_1(\cdot)]; \frac12 [v_1,\Pi_2(\cdot)] + \frac1{12} [\Pi_1(\cdot),[\Pi_1(\cdot),v_1] ) ,
\]
as a map into $\Jet_{\Poly}^2(\frk g';V_3)$.

If we want to write the components of $\phi$ as multilinear maps (instead of polynomials),
we can use Theorem~\ref{thm6171c9a8}.
So, if $P:G'\to V_3$ is the map $P(x) = [\Pi_1(x),[\Pi_1(x),v_1]]$, then we need to compute
$A^2_{P,e}(x_1,y_1) = \tilde y_1\tilde x_1P(e)$, which we can compute using~\eqref{eq05162009}.
We obtain
\[
A^2_{P,e}(x_1,y_1) = [x_1,[y_1,v_1]] + [y_1,[x_1,v_1]]
\]
Similarly, for $Q(x) = [v_1,\Pi_2(x)]$, we get
$A^2_{Q,e}(x_1,y_1) = \frac12 [v_1,[y_1,x_1]]$.

Therefore, as a map into $\Jet^2(\frk g';V_3)$, we have
\[
\phi(v_1,v_2,v_3) =
( v_1,v_2;v_3;\frac12 [v_2,\cdot]; \frac12 [v_1,[y,x]] + \frac1{12} [x,[y,v_1]] + [y,[x,v_1]] ) ,
\]
where $x$ and $y$ are the place holders to indicate the ordered entries of the bilinear map.

\begin{remark}\label{rem617bb0a5}
	Notice that if $G=\R^n$ is abelian, then $\Jet^m(\R^n;W)$ has the property
	\begin{equation}\label{eq61ddacc5}
	[\jet^m(\R^n;W)_i,\jet^m(\R^n;W)_j]=0
	\qquad\text{ for all $i,j>1$.}
	\end{equation}
	Every stratified subgroup of $\Jet^m(\R^n;W)$ 
	must also satisfy~\eqref{eq61ddacc5}.
	Therefore, not all stratified Lie group of step larger than 2 can be embedded in a standard jet space.
\end{remark}

\section{Example: the first Heisenberg Group}
\label{sec617bad0c}

\subsection{The first Heisenberg group}
The Lie algebra $\frk h$ of the \emph{first Heisenberg group} $\bb H$ 
is the stratified three-dimensional Lie algebra with basis $X,Y,Z$ 
and with the only non-trivial bracket relation $Z=[X,Y]$.
The stratification $\frk h=V_1\oplus V_2$ is given by $V_1=\Span\{X,Y\}$ and $V_2=\R Z$.
In exponential coordinates $(x,y,z)$, we have
\[
\tilde X(x,y,z) = \de_x - \frac{y}{2} \de_z , \qquad
\tilde Y(x,y,z) = \de_y + \frac{x}{2} \de_z , \qquad
\tilde Z(x,y,z) = \de_z .
\]

We will apply the algorithm described in Remark~\ref{rem6093eb23} to obtain a basis for $\HorDer^m(\frk h;\R)$ for $m\in\{1,2,3\}$.
Moreover, we compute the corresponding basis of $\Poly_{e}^m(G;\R)$.
The corresponding basis for $\Poly_p^m(G;\R)$ can be obtained via~\eqref{eq61715bca}.

\subsection{A basis for $\HorDer^1(\frk h;\R)$}
We have already chosen the basis $\scr B=\{X,Y\}$ of $V_1$,
which has a dual basis $\{X^*,Y^*\}$ for $V_1^*$.
The set of multi-indices is $\scr I^1=\{(1,0,0),(0,1,0)\}$.
A basis of $\scr U^1(\bb H)$ is $\{\tilde X,\tilde Y\}$.
The map $\tau:\Tensor^1(V_1)\to\UniEnvAlg^1(\bb H)$ is given by $\tau(X)=\tilde X$ and $\tau(Y)=\tilde Y$, 
so by~\eqref{eq6178f2b6},
\[
A_{(1,0,0)} = X^* \quad {\rm and} \quad A_{(0,1,0)} = Y^*
\]
form a basis of $\HorDer^1(\frk h;\R)$.
The basis of $\Poly^1_{e}(\bb H;\R)$ dual to $\{\tilde X,\tilde Y\}$ is 
$\{x,y\}$.

\subsection{A basis for $\HorDer^2(\frk h;\R)$}
In this case, we have 
$\Xi^2 = \{X\otimes X, X\otimes Y, Y\otimes X, Y\otimes Y\}$.
The set of multi-indices is $\scr I^2 = \{(2,0,0),(1,1,0),(0,2,0),(0,0,1)\}$.
A basis of $\scr U^2(\bb H)$ is $\{\tilde X^2,\tilde X\tilde Y,\tilde Y^2,\tilde Z\}$.
We can compute the map $\tau:\Tensor^2(V_1)\to\UniEnvAlg^2(\bb H)$ as
\begin{align*}
\tau(X\otimes X) &= \tilde X\tilde X , &
\tau(X\otimes Y) &= \tilde Y\tilde X = \tilde X\tilde Y - \tilde Z , \\
\tau(Y\otimes X) &= \tilde X\tilde Y , &
\tau(Y\otimes Y) &= \tilde Y\tilde Y .
\end{align*}
Notice the specular order of $X$ and $Y$ on the two sides of the equality.
A basis of $\HorDer^2(\frk h;\R)$ is
\begin{align*}
A_{(2,0,0)} &= X^*\otimes X^* , &
A_{(1,1,0)} &= X^*\otimes Y^* + Y^*\otimes X^* , \\
A_{(0,2,0)} &= Y^*\otimes Y^* , &
A_{(0,0,1)} &= - X^*\otimes Y^* .
\end{align*}
Notice that $\HorDer^2(\frk h;\R) = \Tensor^2(V_1^*)$.

To compute a basis of $\Poly_{e}^2(\bb H;\R)$ dual to $\{\tilde X^2,\tilde X\tilde Y,\tilde Y^2,\tilde Z\}$ we need to first compute the action of each element of the latter basis to $\{x^2,xy,y^2,z\}$, which is a basis of $\Poly_{e}^2(\bb H;\R)$.
We present this action in the following table.

\newcolumntype{L}{>{$}c<{$}} 
\renewcommand{\arraystretch}{1.2}
\begin{center}
\begin{tabular}{|L|L|L|L|L|}
\hline
 		& \tilde X^2 & \tilde X\tilde Y & \tilde Y^2 & \tilde Z \\ \hline
x^2 	& 2	& 0 & 0 & 0 \\ \hline
xy 	& 0 & 1 & 0 & 0 \\ \hline
y^2 	& 0 & 0 & 2 & 0 \\ \hline
z		& 0 & 1/2 & 0 & 1 \\ \hline
\end{tabular}
\end{center}
Therefore, the basis of $\Poly_{e}^2(\bb H;\R)$ dual to $\{\tilde X^2,\tilde X\tilde Y,\tilde Y^2,\tilde Z\}$ is 
$\{x^2/2,xy,y^2/2,z-xy/2\}$.

\subsection{A basis for $\HorDer^3(\frk h;\R)$}
We are now giving the formulas without description:
\begin{align*}
\Xi^3 &= \{
X\otimes X\otimes X, X\otimes X\otimes Y, X\otimes Y\otimes X, X\otimes Y\otimes Y, \\
&\qquad Y\otimes X\otimes X, Y\otimes X\otimes Y, Y\otimes Y\otimes X, Y\otimes Y\otimes Y
\} ; \\
\scr I^3 &= \{(3,0,0),(2,1,0),(1,2,0),(0,3,0),(1,0,1),(0,1,1)\} ; \\
	&\quad \{ \tilde X^3 , \tilde X^2\tilde Y, \tilde X\tilde Y^2, \tilde Y^3, \tilde X\tilde Z, \tilde Y\tilde Z\} ; \\
\tau(X\otimes X\otimes X) &= \tilde X\tilde X\tilde X 
	= \tilde X^3, \\
\tau(X\otimes X\otimes Y) &= \tilde Y\tilde X\tilde X 
	= \tilde X^2\tilde Y - 2\tilde X\tilde Z , \\
\tau(X\otimes Y\otimes X) &= \tilde X\tilde Y\tilde X 
	= \tilde X^2\tilde Y - \tilde X\tilde Z, \\
\tau(X\otimes Y\otimes Y) &= \tilde Y\tilde Y\tilde X 
	= \tilde X\tilde Y^2 - 2\tilde Y\tilde Z, \\
\tau(Y\otimes X\otimes X) &= \tilde X\tilde X\tilde Y
	= \tilde X^2\tilde Y, \\
\tau(Y\otimes X\otimes Y) &= \tilde Y\tilde X\tilde Y 
	= \tilde X\tilde Y^2 - \tilde Y\tilde Z , \\
\tau(Y\otimes Y\otimes X) &= \tilde X\tilde Y\tilde Y 
	= \tilde X\tilde Y^2, \\
\tau(Y\otimes Y\otimes Y) &= \tilde Y\tilde Y\tilde Y 
	= \tilde Y^3; \\
A_{(3,0,0)} &= X^*\otimes X^*\otimes X^* , \\
A_{(2,1,0)} &= X^*\otimes X^*\otimes Y^* + X^*\otimes Y^*\otimes X^* + Y^*\otimes X^*\otimes X^* , \\
A_{(1,2,0)} &= X^*\otimes Y^*\otimes Y^* + Y^*\otimes X^*\otimes Y^* + Y^*\otimes Y^*\otimes X^* , \\
A_{(0,3,0)} &= Y^*\otimes Y^*\otimes Y^* , \\
A_{(1,0,1)} &= -2 X^*\otimes X^*\otimes Y^* - X^*\otimes Y^*\otimes X^* , \\
A_{(0,1,1)} &= -2 X^*\otimes Y^*\otimes Y^* - Y^*\otimes X^*\otimes Y^* .
\end{align*}

\renewcommand{\arraystretch}{1.2}
\begin{center}
\begin{tabular}{|L|L|L|L|L|L|L|}
\hline
 		& \tilde X^3 & \tilde X^2\tilde Y & \tilde X\tilde Y^2 & \tilde Y^3 & \tilde X\tilde Z & \tilde Y\tilde Z \\ \hline
x^3 & 6 & 0 & 0 & 0 & 0 & 0 \\ \hline
x^2y & 0 & 2 & 0 & 0 & 0 & 0 \\ \hline
xy^2 & 0 & 0 & 2 & 0 & 0 & 0 \\ \hline
y^3 & 0 & 0 & 0 & 6 & 0 & 0 \\ \hline
xz & 0 & 1 & 0 & 0 & 1 & 0 \\ \hline
yz & 0 & 0 & 1 & 0 & 0 & 1 \\ \hline
\end{tabular}
\end{center}
Finally, from this table we get the basis of $\Poly_{e}^3(\bb H;\R)$ dual to\\ $\{ \tilde X^3 , \tilde X^2\tilde Y, \tilde X\tilde Y^2, \tilde Y^3, \tilde X\tilde Z, \tilde Y\tilde Z\}$:
\[
\left\{ 
\frac{x^3}{6} , 
\frac{x^2y}{2} ,
\frac{xy^2}{2} ,
\frac{y^3}{6} ,
xz - \frac{x^2y}{2} ,
yz - \frac{xy^2}{2}
\right\} .
\]

\subsection{The Lie algebra $\jet^2(\frk h;\R)$}\label{sec62d95272}
We will describe in Tables~\ref{tb62d94cbd} and~\ref{tb62d94fd6} the Lie algebra structure of 
\[
\jet^2(\frk h;\R) = \frk h \oplus \R \oplus \HorDer^1(\frk h;\R) \oplus \HorDer^2(\frk h;\R).
\]
The following computation for one of the Lie brackets might be instructive to the reader:
\begin{align*}
	[A_{(0,0,-1)},A_{(0,0,1)}]
	&= [Z,\dual{X}\ts\dual{Y}] \\
	&\overset{(*)}= - Z \rcontr (\dual{X}\ts\dual{Y}) \\
	&\overset{(**)}= [X\rcontr , Y\rcontr](\dual{X}\ts\dual{Y}) \\
	&= (X\rcontr Y\rcontr - Y\rcontr X\rcontr)(\dual{X}\ts\dual{Y}) \\
	&= (X\rcontr \dual{X}  - Y\rcontr 0) \\
	&= T
\end{align*}
Identity $(*)$ is an application of the definition~\eqref{eq5ebb173d},
while $(**)$ is an application of Proposition~\ref{prop05301400}.
The symbol $T$ denotes the standard basis element of $\R$, that is, $1$.

\newcolumntype{m}{>{$}c<{$}}

\begin{landscape}
\begin{table}
\caption{Reference table of explicit symbols used in Table~\ref{tb62d94cbd}}
\label{tb62d94fd6}

  \begin{tabular}{@{} |mmmmmm|mmm|m| @{}}
    \hline
     1 & 2 & 3 & 4 & 5 & 6 & 7 & 8 & 9 & 10 \\ 
    \hline
     A_{(-1, 0, 0)} & A_{(0, -1, 0)} & A_{(2, 0, 0)} & A_{(1, 1, 0)} & A_{(0, 2, 0)} & A_{(0, 0, 1)} & A_{(0, 0, -1)} & A_{(1, 0, 0)} & A_{(0, 1, 0)} & A_{(0, 0, 0)} \\ 
     X &
 Y &
 \dual{X}\ts\dual{X} &
 \dual{X}\ts\dual{Y} + \dual{Y}\ts\dual{X} &
 \dual{Y}\ts\dual{Y} &
 -\dual{X}\ts\dual{Y} &
 Z &
 \dual{X} &
 \dual{Y} &
 T \\
    \hline
  \end{tabular}
\end{table}

\begin{table}

\vspace{1cm}

\caption{Lie bracket relations in $\jet^2(\frk h;\R)$.}
\label{tb62d94cbd}

\begin{tabular}{@{} |m|mmmmmm|mmm|m| @{}}
    \hline
    * & A_{(-1, 0, 0)} & A_{(0, -1, 0)} & A_{(2, 0, 0)} & A_{(1, 1, 0)} & A_{(0, 2, 0)} & A_{(0, 0, 1)} & A_{(0, 0, -1)} & A_{(1, 0, 0)} & A_{(0, 1, 0)} & A_{(0, 0, 0)} \\  
    \hline
    A_{(-1, 0, 0)} & 0 & A_{(0, 0, -1)} & -A_{(1, 0, 0)} & -A_{(0, 1, 0)} & 0 & 0 & 0 & -A_{(0, 0, 0)} & 0 & 0 \\
A_{(0, -1, 0)} & -A_{(0, 0, -1)} & 0 & 0 & -A_{(1, 0, 0)} & -A_{(0, 1, 0)} & A_{(1, 0, 0)} & 0 & 0 & -A_{(0, 0, 0)} & 0 \\
A_{(2, 0, 0)} & A_{(1, 0, 0)} & 0 & 0 & 0 & 0 & 0 & 0 & 0 & 0 & 0 \\
A_{(1, 1, 0)} & A_{(0, 1, 0)} & A_{(1, 0, 0)} & 0 & 0 & 0 & 0 & 0 & 0 & 0 & 0 \\
A_{(0, 2, 0)} & 0 & A_{(0, 1, 0)} & 0 & 0 & 0 & 0 & 0 & 0 & 0 & 0 \\
A_{(0, 0, 1)} & 0 & -A_{(1, 0, 0)} & 0 & 0 & 0 & 0 & -A_{(0, 0, 0)} & 0 & 0 & 0 \\
\hline
A_{(0, 0, -1)} & 0 & 0 & 0 & 0 & 0 & A_{(0, 0, 0)} & 0 & 0 & 0 & 0 \\
A_{(1, 0, 0)} & A_{(0, 0, 0)} & 0 & 0 & 0 & 0 & 0 & 0 & 0 & 0 & 0 \\
A_{(0, 1, 0)} & 0 & A_{(0, 0, 0)} & 0 & 0 & 0 & 0 & 0 & 0 & 0 & 0 \\
\hline
A_{(0, 0, 0)} & 0 & 0 & 0 & 0 & 0 & 0 & 0 & 0 & 0 & 0 \\ 
    \hline
  \end{tabular}
\end{table}
\end{landscape}

\end{document}